\documentclass[12pt]{amsart}

\usepackage{titletoc}
\usepackage[utf8]{inputenc}
\usepackage[T1]{fontenc}
\usepackage{indentfirst}
\usepackage[margin=1in]{geometry} 
\usepackage{lipsum}
\usepackage{amsmath,amscd,amsthm,amssymb,mathrsfs,stmaryrd,color,tikz-cd,graphicx,tikz,mathtools,subfigure}
\tikzcdset{arrow style=tikz,diagrams={>=stealth}}
\usepackage{enumerate}
\usepackage[bookmarks, bookmarksdepth=1, colorlinks=true, linkcolor=blue, citecolor=red, urlcolor=blue,pagebackref]{hyperref}
\usepackage{subfiles}

\usetikzlibrary{matrix,arrows,decorations.pathmorphing,cd,topaths,calc,graphs}
\usepackage[all,cmtip]{xy}

\usepackage{relsize}

\newcommand{\sgn}{\operatorname{sgn}}

\renewcommand{\Re}{\operatorname{Re}}
\renewcommand{\Im}{\operatorname{Im}}

\makeatletter
\newcommand{\subalign}[1]{%
  \vcenter{%
    \Let@ \restore@math@cr \default@tag
    \baselineskip\fontdimen10 \scriptfont\tw@
    \advance\baselineskip\fontdimen12 \scriptfont\tw@
    \lineskip\thr@@\fontdimen8 \scriptfont\thr@@
    \lineskiplimit\lineskip
    \ialign{\hfil$\m@th\scriptstyle##$&$\m@th\scriptstyle{}##$\crcr
      #1\crcr
    }%
  }
}
\makeatother

\newcommand{\BA}{\mathbb{A}}

\newcommand{\BC}{\mathbb{C}}
\newcommand{\BD}{\mathbb{D}}

\newcommand{\BH}{\mathbb{H}}

\newcommand{\BP}{\mathbb{P}}
\newcommand{\BQ}{\mathbb{Q}}
\newcommand{\BR}{\mathbb{R}}

\newcommand{\BZ}{\mathbb{Z}}

\newcommand{\cA}{\mathcal{A}}

\newcommand{\cC}{\mathcal{C}}

\newcommand{\cD}{\mathcal{D}}

\newcommand{\sF}{\mathscr{F}}
\newcommand{\bG}{\mathbf{G}}

\newcommand{\cH}{\mathcal{H}}

\newcommand{\cM}{\mathcal{M}}

\newcommand{\cO}{\mathcal{O}}

\newcommand{\sP}{\mathscr{P}}

\newcommand{\cS}{\mathcal{S}}

\newcommand{\sS}{\mathscr{S}}

\newcommand{\cT}{\mathcal{T}}

\newcommand{\sT}{\mathscr{T}}

\newcommand{\bZ}{\mathbf{Z}}

\newcommand{\fa}{\mathfrak{a}}

\newcommand{\fg}{\mathfrak{g}}

\newcommand{\fk}{\mathfrak{k}}

\newcommand{\fm}{\mathfrak{m}}

\newcommand{\fn}{\mathfrak{n}}

\newcommand{\fp}{\mathfrak{p}}

\newcommand{\fu}{\mathfrak{u}}

\newcommand{\bw}{\mathbf{w}}

\newcommand{\bz}{\mathbf{z}}
\newcommand{\fz}{\mathfrak{z}}

\newcommand{\vertiii}[1]{{\left\vert\kern-0.25ex\left\vert\kern-0.25ex\left\vert #1 
    \right\vert\kern-0.25ex\right\vert\kern-0.25ex\right\vert}}

\newcommand{\rom}[1]{\uppercase\expandafter{\romannumeral #1\relax}}

\renewcommand{\tilde}[1]{\widetilde{#1}}




\DeclareMathOperator{\supp}{Supp}

\DeclareMathOperator{\End}{End}

\newcommand{\GL}{\operatorname{GL}}




\makeatletter
\newcommand{\colim@}[2]{%
  \vtop{\m@th\ialign{##\cr
    \hfil$#1\operator@font lim$\hfil\cr
    \noalign{\nointerlineskip\kern1.5\ex@}#2\cr
    \noalign{\nointerlineskip\kern-\ex@}\cr}}%
}
\newcommand{\Lim}{%
  \mathop{\mathpalette\varlim@{\leftarrowfill@\scriptscriptstyle}}\nmlimits@
}

\newcommand{\colim}{%
  \mathop{\mathpalette\varlim@{\rightarrowfill@\scriptscriptstyle}}\nmlimits@
}

\makeatother


\newtheorem{theorem}{Theorem}[section]

\newtheorem{proposition}[theorem]{Proposition}

\newtheorem{corollary}[theorem]{Corollary}

\newtheorem{lemma}[theorem]{Lemma}

\theoremstyle{definition}

\theoremstyle{remark}

\newtheorem*{rmk}{Remark}

\usepackage[new]{old-arrows}

\newcommand{\overbar}[1]{\mkern 1.5mu\overline{\mkern-1.5mu#1\mkern-1.5mu}\mkern 1.5mu}

\usepackage{amssymb}

\usepackage[utf8]{inputenc}

\title{Kakeya-Nikodym norms of Maass forms on $\rm{U}(2,1)$}
\author{Jiaqi Hou}
\address{Department of Mathematics, Louisiana State University, Baton Rouge, LA 70803, USA}
\email{jhou7@lsu.edu}
\date{}

\usepackage{hyperref}
\numberwithin{equation}{section}

\setcounter{tocdepth}{1}
\begin{document}

\begin{abstract}
    Let $\psi$ be a Hecke-Maass form with a large spectral parameter on a compact arithmetic complex hyperbolic surface. We apply the amplification method to obtain a power saving over the trivial bound for the Kakeya-Nikodym norm of $\psi$. As a consequence, we obtain power savings over the local bound of Sogge for $\|\psi\|_p$ when $2<p<10/3$.
\end{abstract}

\maketitle

{
}

\section{Introduction}
Let $M$ be a compact Riemannian manifold of dimension $n$, and let $\Delta$ be the Laplace-Beltrami operator on $M$.
If $\psi$ is a Laplace eigenfunction on $M$
satisfying  $\Delta\psi + \lambda^2\psi = 0$ with $\lambda\geq 0$ and $\|\psi \|_2 =1$,
a classical result on $\| \psi\|_p$ is due to Sogge  \cite{sogge1988concerning} (see also Avakumovi{\'c} \cite{avakumovic1956eigenfunktionen}  and Levitan \cite{levitan1952asymptotic} when $p=\infty$), 
and is
\begin{align}\label{eq:Sogge bound}
    \| \psi\|_p \ll \lambda^{\delta(p,n)}, 
\end{align}
where $\delta(p,n)$ is given by
\begin{equation*}
            \delta(p,n) = \begin{dcases}
                \frac{n-1}{4} - \frac{n-1}{2p} \;&\text{ for }\;2\leq p\leq \frac{2(n+1)}{n-1},\\
                \frac{n-1}{2} - \frac{n}{p} \;\;\;\;&\text{ for }\;\frac{2(n+1)}{n-1}\leq p\leq\infty.
            \end{dcases}
\end{equation*}
When $M$ is the round sphere, the above estimates are saturated by the zonal spherical harmonics for $p\geq \frac{2(n+1)}{n-1}$ and by the highest weight spherical harmonics for $2< p\leq \frac{2(n+1)}{n-1}$. However, it is expected that \eqref{eq:Sogge bound} can be improved under extra geometric assumptions on $M$.
For instance, there are log improvements if $M$ has nonpositive sectional curvature, by the work of B\'erard \cite{berard1977wave}, Hassell and Tacy \cite{hassell2015improvement}, and Blair and Sogge \cite{blair2018concerning,blair2019logarithmic}.

In this paper, we will focus on the problem on improving \eqref{eq:Sogge bound} in the range 
\begin{align}\label{eq: small p range}
    2< p< \frac{2(n+1)}{n-1}
\end{align}
based on the Kakeya-Nikodym bounds for eigenfunctions by Blair-Sogge \cite{blair2017refined}.
Let $\Pi$ denote the space of unit length geodesic segments in $M$.
If $\gamma\in\Pi$, we denote the $\lambda^{-1/2}$-neighborhood  of $\gamma$ in $M$ by $\sT_{\lambda^{-1/2}}(\gamma)$.
Following \cite{blair2017refined}, the Kakeya-Nikodym norm of $\psi$ is defined as
\begin{align*}
    \vertiii{\psi}_{KN} = \sup_{\gamma\in\Pi}  \| \psi|_{\sT_{\lambda^{-1/2}}(\gamma)}  \|_2 = \left(\sup_{\gamma\in\Pi}  \int_{\sT_{\lambda^{-1/2}}(\gamma)}|\psi(x)|^2 dx\right)^{1/2},
\end{align*}
which measures the concentration of
eigenfunctions on geodesic tubes. These norms were introduced by Sogge \cite{Sogge11Tohoku}. Since we are assuming the eigenfunctions to be $L^2$-normalized, we always have the trivial upper bound
\begin{align*}
    \vertiii{\psi}_{KN}\leq \|\psi\|_2 =1.
\end{align*}
The trivial bound $\vertiii{\psi}_{KN}\ll 1$ is sharp for general Riemannian manifolds, which are saturated by the highest weight spherical harmonics on the round spheres as well.

Blair and Sogge \cite{blair2017refined} showed that improvements on the trivial bound $ \vertiii{\psi}_{KN} \ll 1$ will give improvements on the $L^p$ bounds \eqref{eq:Sogge bound} for $p$ below the critical exponent,
that is, for $2(n+2)/n< p < 2(n+1)/(n-1)$, 
\begin{align}\label{eq:Blair-Sogge}
    \| \psi \|_p \ll \lambda^{\delta(p,n)} \vertiii{\psi}_{KN}^{\frac{2(n+1)}{p(n-1)} -1 }.
\end{align}
When $n\geq 3$, the above result has recently been extended by Gao, Wu, and Xi \cite{gao2025sharp} for the larger range $2(3n+1)/(3n-3)< p < 2(n+1)/(n-1)$.
See also \cite{Bourgain,Sogge11Tohoku,BS15APDE} for the related results in dimension two.
By using Toponogov’s comparison theorem from Riemannian geometry, Blair and Sogge \cite{blair2018concerning} obtained log improvements for the Kakeya–Nikodym norms when $M$ has nonpositive curvature. Using these and \eqref{eq:Blair-Sogge}, they were able to obtain log improvements over Sogge's $L^p$-bounds \eqref{eq:Sogge bound} for such manifolds when $p$ satisfies \eqref{eq: small p range}.

In the arithmetic setting, we have further improvements over the trivial bound $\vertiii{\psi}_{KN}\ll 1$.
Now we let $M$ be a compact congruence arithmetic hyperbolic surface and let $\psi$ be an $L^2$-normalized Hecke-Maass form. In \cite[Theorem 1.1]{marshall2016geodesic}, Marshall proved an $L^2$-bound for geodesic restrictions:
\begin{align*}
    \sup_{\gamma\in\Pi} \| \psi|_\gamma \|_{L^2(\gamma)}\ll_\epsilon \lambda^{3/14+\epsilon},
\end{align*}
which implies that
\begin{align*}
    \vertiii{\psi}_{KN}\ll_\epsilon \lambda^{-1/28+\epsilon}.
\end{align*}
Combining the above estimate with the main theorem of \cite{BS15APDE}, Marshall \cite[Corollary 1.2]{marshall2016geodesic} therefore obtained an improvement over the local bound $\|\psi\|_4\ll\lambda^{1/8}$ of Sogge \eqref{eq:Sogge bound}:
\begin{align*}
    \| \psi \|_4\ll_\epsilon \lambda^{1/8-1/56+\epsilon}.
\end{align*}
The corresponding result about the Kakeya-Nikodym norms of Hecke-Maass forms on compact arithmetic hyperbolic 3-manifolds was proved in \cite{hou2024restrictions}. In this paper, we will prove a similar result for Hecke-Maass forms on compact arithmetic complex hyperbolic surfaces.
All of these results were based on the method of arithmetic amplification, which was introduced by Iwaniec and Sarnak \cite{iwaniec1995norms} to study $L^\infty$-norms of Hecke-Maass forms on arithmetic hyperbolic surfaces. See e.g. \cite{blomer2019sup,blomer2016subconvexity,blomer2016sup}, for the bound of $L^\infty$-norms of Hecke-Maass forms on other groups via the method of arithmetic amplification.

\begin{theorem}\label{thm:main theorem}
    Let $X$ be a compact arithmetic congruence complex hyperbolic surface and
    let $\psi$ be an $L^2$-normalized  Hecke–Maass form on $X$ with spectral parameter $\lambda>0$ (see \S \ref{sec: notation} for precise definitions).
    Then for any $\epsilon>0$ we have
    \begin{align*}
        \vertiii{\psi}_{KN} \ll \lambda^{-1/28+ \epsilon},
    \end{align*}
    where the implied constant depends only on $X$ and $\epsilon$.
\end{theorem}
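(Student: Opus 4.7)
The plan is to implement the arithmetic amplification method of Iwaniec--Sarnak in the complex hyperbolic setting, adapted to the geodesic-tube problem as in Marshall \cite{marshall2016geodesic} for real hyperbolic surfaces and the author's earlier paper \cite{hou2024restrictions} for hyperbolic $3$-manifolds. Write $G=\rU(2,1)$, let $\Gamma$ be the arithmetic lattice uniformizing $X$, lift the unit geodesic segment $\gamma$ to $\BH^2_\BC$, and let $H\subset G$ denote the setwise stabilizer of the lifted geodesic.

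First, I would choose a non-negative bi-$K$-invariant kernel $k_\lambda$ on $G$ whose spherical transform is a bump concentrated at the spectral parameter of $\psi$. For a Hecke amplifier $M=\sum_{p}a_pT_p+\sum_{p}b_pT_{p^2}$ supported on primes $L\leq p\leq 2L$, standard constructions produce coefficients such that $M\psi=\Lambda\psi$ with $|\Lambda|\gg L^{1-\epsilon}$. Spectral positivity, combined with the pre-trace formula applied to the kernel associated to $k_\lambda\ast m\ast m^{*}$, then yields
\begin{align*}
    |\Lambda|^2\int_{\sT_{\lambda^{-1/2}}(\gamma)}|\psi(x)|^2\,dx \;\ll\; \int_{\sT_{\lambda^{-1/2}}(\gamma)}\sum_{\delta\in S_L}k_\lambda(x^{-1}\delta x)\,dx,
\end{align*}
where $S_L$ is the collection of $\Gamma$-orbit representatives appearing in $M\ast M^{*}$, consisting of elements of Hecke norm at most $L^2$.

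The geometric side is estimated by combining standard bounds on the size and oscillatory decay of the spherical kernel $k_\lambda$ with the tube integration, which localizes the contribution to those $\delta\in S_L$ for which $\delta$ is $O(\lambda^{-1/2})$-close to (a suitable conjugate of) the geodesic stabilizer $H$ at some point of the tube. After a dyadic decomposition in the distance $R$ from $\delta$ to $H$, the problem reduces to an arithmetic lattice-point count: estimate, for each $R\in[\lambda^{-1/2},1]$, the number $N(L,R)$ of $\delta\in S_L$ lying in an $R$-neighborhood of $H$ inside $G$, with the volume weight from the tube integration folded in.

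The main obstacle will be this Diophantine count. I would realize $\Gamma$ as the unit group of a Hermitian form over the integers of a CM field $E$, encode the condition "$\delta$ is $R$-close to $H$" as a system of small-integer inequalities on the entries of $\delta$, and count solutions via the geometry-of-numbers argument of \cite{marshall2016geodesic, hou2024restrictions}. The new feature in the complex hyperbolic case is that the three directions transverse to $\gamma$ split, under the action of $K\cap H$, into one "radial" and two "complex-tangent" components with distinct scaling, so the relevant neighborhood of $H$ is an anisotropic box rather than a ball, and the count must be performed in coordinates respecting this splitting. Once a polynomial bound of the form $N(L,R)\ll L^{c_1}R^{c_2}+L^{c_3}$ with appropriate exponents is established, inserting it into the spectral inequality with $R=\lambda^{-1/2}$ and optimizing in $L$ should yield $\|\psi|_{\sT_{\lambda^{-1/2}}(\gamma)}\|_2^2\ll\lambda^{-1/14+\epsilon}$. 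Taking the square root and the supremum over unit geodesic segments $\gamma$ then gives the bound $\vertiii{\psi}_{KN}\ll\lambda^{-1/28+\epsilon}$ asserted in the theorem.
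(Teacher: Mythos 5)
Your outline captures the broad architecture the paper follows — amplified pretrace formula over the unitary group, Hecke returns near the geodesic stabilizer $MA$, arithmetic counting, optimization of the amplifier length — but it omits the paper's central technical device, and the version you describe would not establish the stated exponent.

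The paper does not estimate the geometric side by a dyadic decomposition in the distance $R$ from $\delta$ to $H$. Instead, it introduces a free parameter $\beta$ with $\lambda^{\epsilon'} \le \beta \le \lambda^{1-\epsilon'}$ and decomposes the tube test function $\phi$ according to the support of its partial Fourier transform $\sF_t\phi$ in the geodesic direction. When $\supp(\sF_t\phi)$ avoids $\pm[\lambda-\beta,\lambda+\beta]$ (the ``off-spectrum'' piece), the identity contribution $I(\lambda,\phi,e)$ alone is already $\ll \beta^{-1/2+\epsilon}$ with no amplification needed. When $\supp(\sF_t\phi)$ is concentrated in $\pm[\lambda-\beta,\lambda+\beta]$ (the ``near-spectrum'' piece), the relevant Hecke returns localize to $d(g^{-1}\gamma g, MA) \ll \lambda^{-1/2+\epsilon_0}\beta^{1/2}$, a scale that is \emph{wider} than $\lambda^{-1/2}$ and depends on $\beta$. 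This localization threshold is what limits the amplifier length $N$ through the constraint $\kappa n \le C$ of Proposition \ref{prop: Hecke return}, giving $N \asymp (\lambda^{1/2-\epsilon_0}\beta^{-1/2})^{1/6}$. Optimizing $\beta = \lambda^{1/7}$ balances the two pieces and gives $\lambda^{-1/28+\epsilon}$. Your claim that the tube integration localizes the contribution to $\delta$ that are $O(\lambda^{-1/2})$-close to $H$ is not correct, and in fact is inconsistent with your subsequent proposal to sum dyadically over $R \in [\lambda^{-1/2}, 1]$; the paper avoids ever having to produce a quantitative lattice-count $N(L,R)$ as a function of $R$, needing only the sharp dichotomy that the count is $n^\epsilon$ once $\kappa n \le C$.

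Moreover, the bulk of the paper's work (\S\ref{sec:der of A}--\ref{sec: bd away from spec}, culminating in Proposition \ref{prop: bound for I}) consists of oscillatory-integral estimates involving the Iwasawa $A$-projection on $\rU(2,1)$: explicit formulas via the Cayley transform, a uniformization lemma for $\partial_t A(\exp(rX)n(z,\tau)a(t))$, and nonstationary-phase bounds exploiting the derivatives in both the $NA$ and $K_0$ directions. These are what simultaneously prove the near-spectrum localization (part 2 of Proposition \ref{prop: bound for I}) and the off-spectrum decay (part 1), and your proposal does not engage with them. Without the Fourier decomposition and these estimates, the chain of reasoning from the pretrace formula to the inequality $\|\psi|_{\sT}\|_2^2 \ll \lambda^{-1/14+\epsilon}$ is not established; the exponent is asserted rather than derived.
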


\begin{corollary}
     Let $X$ and $\psi$ be the same as in Theorem \ref{thm:main theorem}. Then we have
    \begin{align*}
        \|  \psi \|_p \ll_\epsilon  \lambda^{\delta(p,4)- (10/3p-1)/28+ \epsilon}
    \end{align*}
    if  $26/9<p<10/3$. Here the implied constant depends only on $X$, $p$ and $\epsilon$.
\end{corollary}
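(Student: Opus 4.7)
The plan is to combine Theorem \ref{thm:main theorem} with the extension of the Blair-Sogge Kakeya-Nikodym-to-$L^p$ inequality established by Gao, Wu, and Xi \cite{gao2025sharp} for $n\geq 3$. Beyond these two inputs the argument is pure bookkeeping of exponents, so no substantive obstacle arises at this stage.

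First, I would quote the Gao-Wu-Xi inequality in the form
$$\|\psi\|_p \ll \lambda^{\delta(p,n)}\,\vertiii{\psi}_{KN}^{\,\frac{2(n+1)}{p(n-1)}-1},\qquad \frac{2(3n+1)}{3n-3}<p<\frac{2(n+1)}{n-1},$$
valid for any $L^2$-normalized Laplace eigenfunction on a compact Riemannian manifold of dimension $n\geq 3$, and specialize to $n=4$, the real dimension of $X$. The admissible range then becomes $26/9<p<10/3$, which matches the range in the corollary exactly, while the exponent of the Kakeya-Nikodym norm simplifies to $\frac{10}{3p}-1$. This exponent is strictly positive on the interval because $\frac{10}{3p}>1\iff p<10/3$, which is what makes any nontrivial Kakeya-Nikodym bound feed into a nontrivial $L^p$ improvement.

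Second, I would insert the bound $\vertiii{\psi}_{KN}\ll_{\epsilon}\lambda^{-1/28+\epsilon}$ furnished by Theorem \ref{thm:main theorem}. Multiplying the two estimates gives
$$\|\psi\|_p\ll_{\epsilon}\lambda^{\delta(p,4)}\cdot\lambda^{(-1/28+\epsilon)\left(\frac{10}{3p}-1\right)}=\lambda^{\delta(p,4)-(10/3p-1)/28+\epsilon'},$$
after absorbing the product $\epsilon\cdot(\tfrac{10}{3p}-1)$ into a new $\epsilon'$ that may be renamed $\epsilon$. This is exactly the stated estimate.

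I note that the classical Blair-Sogge inequality \eqref{eq:Blair-Sogge} alone would not suffice, since its $p$-range $2(n+2)/n<p<2(n+1)/(n-1)$ reads $3<p<10/3$ when $n=4$ and therefore misses the subinterval $26/9<p\leq 3$; it is precisely the Gao-Wu-Xi extension that permits the full range in the corollary. The substantive difficulty of the paper is concentrated entirely in Theorem \ref{thm:main theorem}, whose proof requires arithmetic amplification on $\mathrm{U}(2,1)$; the deduction of the corollary from it is essentially a one-line substitution.
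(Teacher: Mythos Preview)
Your proposal is correct and matches the paper's own argument, which simply says the corollary follows by combining Theorem \ref{thm:main theorem} with the Kakeya--Nikodym bounds \eqref{eq:Blair-Sogge}. You are in fact slightly more precise than the paper: the range $26/9<p<10/3$ in the corollary coincides with the Gao--Wu--Xi range $2(3n+1)/(3n-3)<p<2(n+1)/(n-1)$ at $n=4$, so the extension \cite{gao2025sharp} (mentioned immediately after \eqref{eq:Blair-Sogge}) is indeed what is being invoked, and your observation that Blair--Sogge alone would only cover $3<p<10/3$ is accurate.
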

This corollary is obtained by directly applying Theorem \ref{thm:main theorem} and the Kakeya-Nikodym bounds \eqref{eq:Blair-Sogge}.
Moreover, we can interpolate the above bounds with $\| \psi\|_2 =1$ to prove power savings over the local $L^p$-bounds \eqref{eq:Sogge bound} for $X$ and $\psi$ with any $2<p<10/3$.

\subsection{Outline of the paper}
The paper is outlined as follows. In \S \ref{sec: notation}, we introduce the notation and the setups. The proof of Theorem \ref{thm:main theorem} is analogous to the proof of \cite[Theorem 1.2]{hou2024restrictions}. In \S \ref{sec:amplification}, we begin by introducing the integrated pretrace formula and its amplification. Note that at finite split places, the unitary group considered is isomorphic to $\GL(3)$, which allows us to use the amplifier constructed in \cite{marshall2015restrictions} at those places. After showing a counting result, we prove Theorem \ref{thm:main theorem} in \S \ref{subsec:proof of main thm} by assuming Proposition \ref{prop: bound for I}, which provides the estimates of the oscillatory integrals appearing in the geometric side of the amplified trace formula. The proof of Proposition \ref{prop: bound for I} occupies the rest of the paper from \S \ref{sec:der of A} to \S\ref{sec: bd away from spec}.

\subsection{Acknowledgements}  The author would like to thank Simon Marshall for suggesting this problem. The author was supported by NSF grant DMS-1902173 and DMS-1954479 when he was at the University of Wisconsin-Madison.

\section{Notation}\label{sec: notation}
Throughout the paper, the notation $A\ll B$ will mean that there is a positive constant $C$ such that $|A| \leq C B$, and $A\asymp B$ will mean that there are positive constants $C_1$ and $C_2$ such that $C_1B \leq A \leq C_2B$. We also use $A =O(B)$ to mean $A\ll B$. 

\subsection{Unitary groups}\label{sec:adele}
Let $F$ be a totally real number field with $\cO=\cO_F$ the ring of integers, and let $E$ be a quadratic imaginary extension of $F$ with $\cO_E$ the ring of integers. We denote by $\BA=\BA_F$ the ring of adeles of $F$, and $\BA_f$ the ring of finite adeles. We shall denote places of $E$ and $F$ by $w$ and $v$ respectively, with corresponding completions $E_w$ and $F_v$, and define $E_v=E\otimes_F F_v$. If $v$ is a finite place of $F$, we denote by $\cO_v$ the ring of integers of $F_v$, by $\varpi_v$ a uniformizer in $\cO_v$ and by $q_v$ the cardinality of the residue field for $F_v$. We denote the norm of an ideal $\fn\subset\cO$ by $\operatorname{N}(\fn)$ and denote the trace and norm from $E$ to $F$ by $\operatorname{Tr}_E$ and $\operatorname{Nm}_E$ respectively. Let $|\cdot|_v$ be the absolute value on $F_v$ for any place $v$ of $F$, and let $|\cdot|_F = \prod_v |\cdot|_v$ be the absolute value on $\BA$. Then the absolute value on $\BA_E$ is given by $|\cdot|_E = |\mathrm{Nm}_E(\cdot)|_F=\prod_w|\cdot|_w$. We denote by $\overline{\cdot}$ the conjugation on $E$ over $F$. 

Let $V$ be a 3-dimensional vector space over $E$, equipped with a nondegenerate Hermitian form $\langle\,,\,\rangle$. We suppose that $\langle\,,\,\rangle$ is linear in the first variable and conjugate-linear in the second variable, and satisfies
\begin{align*}
    \overline{\langle v_1,v_2\rangle}=\langle v_2,v_1\rangle
\end{align*}
for all $v_1,v_2\in V$. We let $\bG=\mathrm{U}(V)$ be the associated unitary group defined over $F$, i.e.,
\begin{align*}
    \bG(F)=\{g\in\GL(V)(E)\mid \langle gv_1,gv_2\rangle=\langle v_1,v_2\rangle\text{ for all }v_1,v_2\in V\}.
\end{align*}
If $v$ is a place of $F$, we shall denote $\bG(F_v)$ by $G_v$. Moreover, for one fixed archimedean place $v_0$ of $F$, we assume that $V_{v_0}=V\otimes_F F_{v_0}$ is an indefinite Hermitian space so that $G_{v_0}$ is isomorphic to the indefinite unitary group $\mathrm{U}(2,1)$. We assume that $V_v$ is definite for any archimedean place $v$ other than $v_0$. For every real place $v$ (resp. $v_0$) of $F$, we denote by $w$ (resp. $w_0$) the complex place of $E$ above it. We will assume that $F\ne\BQ$ and thus $\bG$ is an anisotropic reductive group over $F$. The center of $\bG$ is denoted by $\bZ$.

We let $K_{v_0}$ be a maximal compact subgroup of $G_{v_0}$. For any other archimedean place $v\ne v_0$, we let $K_v=G_v$, which is isomorphic to the compact unitary group $\mathrm{U}(3)$. Then $K_\infty=\prod_{v|\infty}K_v$ is a maximal compact subgroup of $G_\infty=\prod_{v|\infty}G_v$.

Let $L\subset V$ be an $\cO_E$-lattice so that $V=L\otimes_{\cO_E}E$ and then $L_v = L\otimes_{\cO}\cO_{v}$ is an $\cO_{E,v}$-lattice in $V_v=V\otimes_F F_v$.
Let $S_\infty$ be the set of the archimedean places of $F$. Let $S$ be any finite set of places of $F$ containing $S_\infty$ and the dyadic places and satisfying that for any $v\notin S$, $L_v$ is a self-dual lattice with respect to the Hermitian form. 
For each finite place $v\notin S$, we define $K_v$ to be the stabilizer of $L_v$ in $G_v$, which is a maximal compact open subgroup. For $v\in S$, we take $K_v$ to be any compact open subgroup of $G_v$ that stabilizes $L_v$. Then $K_f=\prod_{v<\infty}K_v$ is a compact open subgroup of $\bG(\BA_f)$, and we let $K=K_\infty K_f \subset \bG(\BA)$.

We define
\begin{align*}
    \sP=\{v\notin S\text{ finite places of }F\text{ so that }v\text{ is split in }E\}.
\end{align*}
For $v\in\sP$, we suppose that $v$ splits into $ww^\prime$ in $E$ with $E_v= E_w\times E_{w'} = F_v\times F_v$ and $V_v= V_w\times V_{w^\prime}$. The Hermitian form induces a nondegenerate $F_v$-bilinear pairing $B_v: V_w\times V_{w^\prime}\to F_v$ so that the Hermitian form $\langle\,,\,\rangle:(V_w\times V_{w^\prime})\times(V_w\times V_{w^\prime})\to F_v\times F_v$ is given by
\begin{align*}
    \langle (v_1,v_1^\prime),(v_2,v_2^\prime)\rangle=(B_v(v_1,v_2^\prime),B_v(v_2,v_1^\prime)).
\end{align*}
There is an $F_v\times F_v$-isomorphism $\iota_v$ from  $V_w\times V_{w^\prime}$ onto $F_v^3\times F_v^3$ that carries $B_v$ to the standard bilinear pairing on $F_v^3\times F_v^3$. By the self-dual assumption, we can assume that $L_v$ is mapped to $\cO_v^3\times\cO_v^3$ under the isomorphism $\iota_v$. Then $\iota_v$ induces the isomoprhisms $G_v\simeq \GL(3,F_v)$ and $K_v\simeq\GL(3,\cO_v)$, where the action of $\GL(3,F_v)$ is the standard multiplication on $V_w$ and is contragradient on $V_{w^\prime}$.
In the rest of the paper, we shall implicitly identify $G_v$ with $\GL(3,F_v)$ and $K_v$ with $\GL(3,\cO_v)$ via $\iota_v$ for $v\in\sP$.

\subsection{Lie groups and alegrbas}
If $A$ is a complex matrix, we denote by $A^*$ the conjugate transpose of $A$.
At $v_0$, we fix a basis for $V_{v_0}=V\otimes_E E_{v_0}$ so that the Hermitian form is given by $\langle\bz,\bw\rangle=z_1\overbar{w_3}+z_2\overbar{w_2}+z_3\overbar{w_1}$ for all $\bz=(z_i),\bw=(w_i)\in\BC^3$. Hence, we shall identify $G_{v_0}$ with the group
\begin{align*}
    \mathrm{U}(2,1)=\left\{g\in\GL(3,\BC)\mid g^*Jg=J\right\},\quad\text{ where }J=\begin{pmatrix}
        &&1\\&1&\\1&&
    \end{pmatrix}.
\end{align*}
We choose the maximal compact subgroup $K_{v_0}$ of $G_{v_0}$ to be the set of fixed points of the Cartan involution $g\in \mathrm{U}(2,1)\mapsto(g^*)^{-1}$, that is
\begin{align*}
    K_{v_0}=\left\{g\in\GL(3,\BC)\mid g^*Jg=J,g^*g=I_3\right\}\simeq \mathrm{U}(2)\times\mathrm{U}(1).
\end{align*}
For simplicity, in the rest of the paper, we will denote $G_{v_0}$, $K_{v_0}$ by $G_0$, $K_0$.

The Lie algebra $\fg=\fu(2,1)$ of $G_0$ consists of matrices $X\in\mathfrak{gl}(3,\BC)$ satisfying $X^*J+JX=0$.
The Cartan involution sends a matrix $X\in\fg$ to $-X^*$. The Lie algebra $\fk$ of $K_0$ is the 1-eigenspace of the Cartan involution. Let $\fp$ be the -1-eigenspace of the Cartan involution. A maximal abelian subspace of $\fp$ can be taken to be
\begin{align*}
    \fa=\{tH\mid t\in\BR\},\quad\text{ where }H = \begin{pmatrix}1/2& 0&0\\0&0&0 \\0 &0&-1/2 \end{pmatrix}
\end{align*}
For $t\in\BR$, we define
\begin{align}\label{eq: para for A}
     a(t):=\exp(tH) = \begin{pmatrix}e^{t/2} &&\\&1& \\&&e^{-t/2}\end{pmatrix}.
\end{align}
Then $A=\{a(t)\mid t\in\BR\}$ is the connected component of a split torus of $G_0$. We let $\epsilon\in\fa^*$ be the functional on $\fa$ sending $tH$ to $t$, so
\begin{align*}
    \Sigma=\Sigma(\fa,\fg)=\{\pm\frac{1}{2}\epsilon,\pm\epsilon\}
\end{align*}
is the set of restricted roots for $\fg$ with respect to $\fa$, and let $\Sigma^+=\{\frac{1}{2}\epsilon,\epsilon\}$ be the set of positive roots. If $\alpha\in\Sigma$, we let $\fg_\alpha\subset\fg$ be the corresponding root space. In particular,
\begin{align*}
    &\fg_{\epsilon/2}=\left\{\begin{pmatrix}
        0&z&0\\0&0&-\overline{z}\\0&0&0
    \end{pmatrix}\mid z\in\BC\right\},\\
    &\fg_{\epsilon}=\left\{\begin{pmatrix}
        0&0&i\tau\\0&0&0\\0&0&0
    \end{pmatrix}\mid \tau\in\BR\right\}.
\end{align*}
Let $m_\alpha=\dim\fg_\alpha$ be the multiplicity of the root $\alpha\in\Sigma$, so $m_{\epsilon/2}=2$, and $m_{\epsilon}=1$. We let
\begin{align*}
    \rho=\frac{1}{2}\sum_{\alpha\in\Sigma^+}m_\alpha\alpha=\epsilon
\end{align*}
be the half sum of positive roots.
We have a restricted root space decomposition $\fg=\fm\oplus\fa\oplus\sum_{\alpha\in\Sigma}\fg_\alpha$ where $\fm= Z_\fk(\fa)$ is the centralizer of $\fa$ in $\fk$. We denote the center of $\fg$ by $\fz$. Then
\begin{align*}
    \fm =\fz\oplus\BR\begin{pmatrix}
        0&&\\&i&\\&&0
    \end{pmatrix}
\end{align*}

We will identify $\fa$ and $\fa^*$ with $\BR$ via the maps $H\mapsto 1$ and $\epsilon\mapsto1$. Then the natural pairing between $\fa$ and $\fa^*$ is the multiplication on $\BR$. Let $\fn=\fg_\epsilon+\fg_{2\epsilon}$ be the nilpotent subalgebra and let $N$ be the subgroup of $G_0$ with Lie algebra $\fn$. Since the exponential map is a diffeomorphism from $\fn$ onto $N$, we shall parametrize $N$ by the pair $(z,\tau)\in\BC\times\BR$. In other words, the map
\begin{align}\label{eq: para for N}
    (z,\tau)\mapsto n(z,\tau):=\exp\left(\begin{pmatrix}
        0&\sqrt{2}z&i\tau\\0&0&-\sqrt{2}\overline{z}\\0&0&0
    \end{pmatrix} \right)=\begin{pmatrix}
        1&\sqrt{2}z&i\tau-{|z|^2}\\&1&-\sqrt{2}\overline{z}\\&&1
    \end{pmatrix}
\end{align}
is a diffeomorphism from $\BC\times\BR$ onto $N$.
We have the Iwasawa decomposition for $G_0$:
\begin{align*}
    G_0 = NAK_0.
\end{align*}
For any $g\in G_0$, we let $n(g)\in N$, $A(g)\in\BR$ and $\kappa(g)\in K_0$, defined by the decomposition $g = n(g)\exp\left(A(g)H\right) \kappa(g)$.

Let $M^\prime$ and $M$ be the normalizer and centralizer, respectively, of $A$ in $K_0$. Then the Lie algebras of $M^\prime$ and $M$ are both $\fm$, and $W=M^\prime/M\simeq \BZ/2$ is the Weyl group. We choose $w_0=\big(\begin{smallmatrix}
        &&-1\\&-1&\\-1&&
    \end{smallmatrix}\big)$ 
to be a representative for the nontrivial Weyl element. 

We equip $M_3(\BC)$ with the standard Euclidean norm as a 9-dimensional complex vector space, which we denote by $\|\cdot\|$. We obtain a positive definite norm on $\mathfrak{gl}(3,\BC)$ and then on $\fg$ from $\|\cdot\|$ under the natural restriction. We let $d(\cdot,\cdot)$ be the left-invariant metric on $G_0$ and $\GL(3,\BC)$ associate to $\|\cdot\|$.

\subsection{Complex hyperbolic plane}

If $\bz=(z_i)\in\BC^3$ with $z_3\neq0$ satisfying $\langle\bz,\bz\rangle<0$, then the image of the projection map $\BP:\bz\mapsto (z_1/z_3,z_2/z_3)\in\BC^2$ defines the \textit{Siegel domain model} for the complex hyperbolic plane $\BH^2_\BC$ with
\begin{align*}
    \BH^2_\BC=\{(z_1,z_2)\in\BC^2\mid 2\mathrm{Re}(z_1)+|z_2|^2<0\}.
\end{align*}
Given $(z_1,z_2)\in\BH^2_\BC$, let $\bz=(z_1,z_2,1)^t$ be the standard lift to $\BC^3$. For any $g\in G_0=\mathrm{U}(2,1)$, $\BP(g\bz)$ is a point in the Siegel domain model. This defines the action $g\cdot(z_1,z_2)$ of $G_0$ on $\BH^2_\BC$. This action is transitive. We define $o=(-1,0)$ to be the origin of $\BH^2_\BC$. Then the maximal compact $K_0$ is the stabilizer of $o$ in $G_0$. We equip $\BH_\BC^2$ with the Bergman metric as follows. If $\bz,\bw\in\BC^3$ so that $\BP(\bz),\BP(\bw)\in\BH_\BC^2$, then the distance $d$ between them is given by
\begin{align*}
    \cosh^2(d/2) =\frac{\langle\bz,\bw\rangle\langle\bw,\bz\rangle}{\langle\bz,\bz\rangle\langle\bw,\bw\rangle}.
\end{align*}
Note that under this metric normalization, the geodesic $a(t)\cdot o$ is parametrized by arc length, and we let $\Delta$ be the Laplace-Beltrami operator on $\BH_\BC^2$ induced by this metric.

For our purpose, it is natural to write $\BH^2_\BC$ under the Iwasawa coordinates. Since
\begin{align}\label{eq: Iwa coord action}
    n(z,\tau)a(t)\cdot o=(-e^t-{|z|^2}+i\tau,-\sqrt{2}\overline{z}),
\end{align}
if $(z_1,z_2)=n(z,\tau)a(t)\cdot o$ then
\begin{align}\label{eq: Iwa coord}
    \begin{split}
        &z=-\frac{1}{\sqrt{2}}\overline{z_2},\quad\tau=\mathrm{Im}(z_1),\quad t=\log(-\mathrm{Re}(z_1)-\frac{|z_2|^2}{2}).
    \end{split}
\end{align}
We let $\cA(z,\tau,t)$ be the distance between $n(z,\tau)a(t)\cdot o$ and the origin $o$. Equivalently, by the Cartan decomposition, $\cA(z,\tau,t)$ is the unique nonnegative number so that
\[
n(z,\tau)a(t)\in K_0 a(\cA(z,\tau,t)) K_0.
\]
Using the distance formula, it may be seen that
\begin{align*}
    \cosh^2(\cA(z,\tau,t)/2)= \frac{(e^t+1+|z|^2)^2+\tau^2}{4e^t}.
\end{align*}
and thus
\begin{align}\label{eq: dist function}
    \cosh(\cA(z,\tau,t))=\cosh(t) + \frac{1}{2}e^{-t}(|z|^4+2|z|^2+\tau^2)+|z|^2.
\end{align}

Let $dz,d\tau$ be the Lebesgue measures on $\BC$ and $\BR$ respectively, and the push-forward measure under $n(z,\tau)$ is a Haar measure on $N$, which we still denote by $dzd\tau$. An invariant measure on $\BH^2_\BC$ is given by $d\mathrm{Vol}=4e^{-2\rho(tH)}dzd\tau dt=4e^{-2t}dzd\tau dt$. Equivalently, this is the volume form given by the Bergman metric on $\BH^2_\BC$ (See e.g. \cite{goldman1999complex}). We let $dk$ be the probability Haar measure on $K_0$. We now define a Haar measure $dg$ on $G_0$ through the Iwasawa decomposition $G_0=NAK_0$. Namely, for $g=n(z,\tau)a(t)k$, we let $dg=4e^{-2t}dzd\tau dtdk$.


\subsection{Hecke algebras}\label{sec: Hecke}
For any continuous function $f$ on $\bG(\BA)$, we define $f^*(g) = \overline{f(g^{-1})}$. We define the unramified Hecke algebra $\cH^S = \bigotimes_{v\notin S}^\prime \cH_v$ to be the convolution algebra of smooth functions on $\bG(\BA^S)=\prod_{v\notin S}^\prime G_v$ that are compactly supported and bi-invariant under $K^S = \prod_{v\notin S} K_v$, and $\cH_v$ denote the space of smooth, compactly supported functions on $G_v$ that are bi-invariant under $K_v$.
We let $v\in\sP$, and $(a_1,a_2,a_3)\in\BZ^3$.
Recall that we identify $G_v$ with $\GL(3,F_v)$ and $K_v$ with $\GL(3,\cO_v)$. 
Define $K_v(a_1,a_2,a_3)\subset \GL(3,F_v)$ to be the double coset
\begin{align*}
    K_v(a_1,a_2,a_3) = K_{v}\begin{pmatrix}\varpi_v^{a_1}&&\\&\varpi_v^{a_2}&\\&&\varpi_v^{a_3}\end{pmatrix}K_{v}.
\end{align*}
We let $\Phi_v(a_1,a_2,a_3)$ be the characteristic function of $K_v(a_1,a_2,a_3)$. Given ideals $\fn_1,\fn_2,\fn_3\subset\cO$ and suppose that $\fn_i$'s are only divisible by primes in $\sP$. We let
\begin{align*}
   K(\fn_1,\fn_2,\fn_3) = \prod_{v\in\sP} K_v(\operatorname{ord}_{v}(\fn_1),\operatorname{ord}_{v}(\fn_2),\operatorname{ord}_{v}(\fn_3)) \times\prod_{v<\infty,v\notin\sP} K_v,
\end{align*}
and let $\Phi(\fn_1,\fn_2,\fn_3)\in\cH_f$ be the characteristic function of $K(\fn_1,\fn_2,\fn_3)$. We shall implicitly identify $\Phi(\fn_1,\fn_2,\fn_3)$ and $K(\fn_1,\fn_2,\fn_3)$ with their images in $\bG/\bZ$ under central integration and projection. The action of $\phi\in\cH_f$ on an automorphic function $f$ on $\bG(F)\backslash \bG(\BA)$ is given by the usual formula
\begin{align*}
    (\phi f)(x) = \int_{\bG(\BA_f)}\phi(g)f(xg) dg.
\end{align*}
Here we use the Haar measures $dg_v$ on $G_v$, which are normalized so that $K_v$ has a unit volume.

\subsection{Maass forms}\label{subsec:Maass forms}
Define $X = \bG(F)\bZ(\BA)\backslash\bG(\BA)/ K$. Because $\bG$ is anisotropic, $X$ is compact. We let $\psi\in L^2(X)$ be an eigenfunction of the Laplace-Beltrami operator $\Delta$ and the Hecke algebras $\cH_v$ for all $v\in\sP$. We let $\lambda>0$ be its spectral parameter, so that
\begin{align*}
    \Delta\psi+(1+\lambda^2)\psi=0.
\end{align*}
We assume that $\|\psi\|_{L^2(X)}=1$.

\subsection{Fourier transforms}\label{subsec:Fourier transforms}
We first recall the Helgason and Harish-Chandra transforms on $\BH_\BC^2$. We refer to \cite{helgason1984groups,helgason1994geometric} for general results. For $f\in C_c^\infty(\BH^3)$,
we let the Helgason transform of $f$ be
\begin{align*}
    \widehat{f}(s,\overline{k}):= \int_{\BH_\BC^2} f(x) \exp({(1-is)A(\overline{k}x)}) d\mathrm{Vol}(x),
\end{align*}
where $s\in\BC$ and $\overline{k} \in M\backslash K_0$.
\begin{rmk}
    In \cite{helgason1994geometric}, the second variable of $\widehat{f}$ is taken to be $b$ in the boundary $B$ of $\BH^2_\BC$. As $B$ can be identified with $K_0/M$, if $b\in K_0/M$, then $\widehat{f}$ is given by
    \begin{align*}
        \widehat{f}(s,b)= \int_{\BH_\BC^2} f(x) \exp({(1-is)A(b^{-1}x)}) d\mathrm{Vol}(x).
    \end{align*}
    It is easy to see the equivalence of our definition.
\end{rmk}
For $s\in\BC$ and $x\in\BH_\BC^2$, we denote by $\varphi_s(x)$ the spherical function on $\BH^2_\BC$ with spectral parameter $s$. We will use the integral formula by Harish-Chandra for the spherical function:
\begin{align}\label{eq:HC K int formula}
    \varphi_s(x)=\int_{K_0}\exp({(1+is)A(kx)})dk=\int_{M\backslash K_0}\exp({(1+is)A(\overline kx)}) d\overline{k}.
\end{align}
Here we let $d\overline{k}$ be the probability invariant measure on $M\backslash K_0$.
If $f$ is left $K_0$-invariant, then its Helgason transform agrees with the Harish-Chandra transform on $\BH_\BC^2$, that is
\begin{align*}
    \widehat{f}(s,\overline{k}) = \widehat{f}(s) = \int_{\BH^2_\BC} f(x) \varphi_{-s}(x)d\mathrm{Vol}(x).
\end{align*}
For $f,g\in C_c^\infty(\BH^3)$,
their convolution, which we denote by $f\times g$, is defined as the convolution of the pullbacks on the group $G_0$, i.e.,
\begin{align*}
    (f\times g)(x) := \int_{G_0} f(g\cdot o) g(g^{-1}\cdot x) dg.
\end{align*}
If moreover $g$ is assumed to be $K_0$-invariant, then
\begin{align*}
    \widehat{f\times g}(s,\overline{k}) = \widehat{f}(s,\overline{k}) \widehat{g}(s).
\end{align*}
We denote by $d\nu(s)$ the Plancherel measure for $\BH^2_\BC$ so that the Fourier inversion formula holds. Namely,
\begin{align*}
    f(x)&=\frac{1}{2}\int_{-\infty}^\infty \int_{M\backslash K_0} \widehat{f}(s,\overline{k})\exp((1+is)A(\overline{k}x)) d\nu(s) d\overline{k}\\
    &=\int_{0}^\infty \int_{M\backslash K_0} \widehat{f}(s,\overline{k})\exp((1+is)A(\overline{k}x)) d\nu(s) d\overline{k}.
\end{align*}
By the formula of Gindikin-Karpelevic, it may be seen that $d\nu(s)/ds\asymp s^3$ if $s\gg 1$. We recall the Plancherel formula on $\BH^2_\BC$. For $f,g\in C_c^\infty(\BH_\BC^2)$,
\begin{align}\label{eq: PL formula}
    \int_{\BH^2_\BC} f(x)\overline{g(x)} d\mathrm{Vol}(x) = \int_0^\infty\int_{M\backslash K_0} \widehat{f}(s,\overline{k})\overline{\widehat{g}(s,\overline{k})} d\nu(s) d\overline{k}.
\end{align}
Therefore, the Helgason transform is defined for all functions in $L^2(\BH^2_\BC)$.

\medskip
Let $\ell$ be the oriented geodesic centered at $o$ in $\BH^2_\BC$ of unit length, i.e.,
\begin{align*}
    \ell=\ell_{[-1/2,1/2]} = \{a(t)\cdot o\mid t\in[-1/2,1/2] \},
\end{align*}
and the positive orientation is the direction where $t$ increases. Any oriented geodesic of unit length in $\BH^2_\BC$ is equal to $g\cdot \ell$ for some $g \in G_0$. We define the $\lambda^{-1/2}$-tube of $\ell$ to be
\begin{align*}
    \sT = \sT_{\lambda^{-1/2}}=  \{ n(z,\tau)a(t)\cdot o \mid |z| \leq \lambda^{-1/2}, |\tau|\leq \lambda^{-1/2},|t|\leq 1/2 \}.
\end{align*}
The $\lambda^{-1/2}$-tube of $g\cdot \ell$ is defined to be $g\cdot \sT$. We shall consider the $L^2$-norm problem for the Maass form $\psi$ restricted to the tube $g\cdot \sT$.
\begin{rmk}
    Note that the tubes we are considering here and in the rest of the paper are different from the ones defined in the introduction. Nevertheless, the two kinds of the tubes are comparable as $\lambda\to\infty$.
\end{rmk}

Let $\BD$ be the open cylinder with both radius and height equal to $\lambda^{-1/2}$ centered at $0$ in $\BC\times\BR$, i.e.,
\begin{align}\label{eq:defn of disc D}
    \BD =\BD_{\lambda^{-1/2}} =  \{(z,\tau)\in\BC\times\BR\mid|z|<\lambda^{-1/2}, |\tau|<\lambda^{-1/2} \},
\end{align}
and denote by $\overline{\BD}$ its closure. We denote by $\cS(\BC\times\BR \times \BR)$ the space of Schwartz functions on the Euclidean space $\BC\times\BR \times \BR \simeq \BR^4$, and let $\cS(\overline{\BD} \times \BR)$ be the subspace consisting of Schwartz functions supported in $\overline{\BD} \times \BR$.  Let $\phi(z,\tau,t)\in \cS(\BC\times\BR\times\BR)$.  We shall consider the Fourier transform in the $t$-variable, i.e.,
\begin{align*}
    (\sF_t \phi)(z,\tau,s) := \int_{-\infty}^\infty \phi(z,\tau,t) e^{-is t} dt.
\end{align*}
We will denote the Fourier inversion by $\sF_t^{-1}$.
For $\phi \in \cS(\BC\times\BR\times\BR)$, we have the integral formula of the Fourier inversion
\begin{align*}
    \phi(z,\tau,t) = \frac{1}{2\pi} \int_{-\infty}^\infty (\sF_t\phi)(z,\tau,s) e^{ist}ds.
\end{align*}

\section{Amplification and main theorem}\label{sec:amplification}
\subsection{Amplification inequality}\label{subsec:amplification}
We fix a real-valued function $h\in C^\infty(\BR)$ of Paley-Wiener type that is nonnegative and satisfies $h(0)=1$. Define $h_\lambda^0(s) = h(s-\lambda)+h(-s-\lambda)$, and let $k_\lambda^0$ be the $K_0$-bi-invariant function on $\BH^2_\BC$ with Harish-Chandra transform $h^0_\lambda$. The Paley-Wiener theorem implies that $k_\lambda^0$ is of compact support that may be chosen arbitrarily small. Define $k_\lambda = k_\lambda^0*k_\lambda^0$, which has Harish-Chandra transform $h_\lambda = (h_\lambda^0)^2$.

Let $b \in C_c^\infty(\overline{\BD} \times (-1,1))$ be a non-negative cutoff function. If $g\in G_{0}$ and $\phi\in \cS(\BC\times\BR\times\BR)$, we define $ I(\lambda,\phi,g)$ to be the integral
\begin{align*}
    \iint_{\BC\times\BR\times \BR} \overline{b\phi(z_1,\tau_1,t_1})b\phi(z_2,\tau_2,t_2) k_\lambda (a(-t_1)n(-z_1,-\tau_1)gn(z_2,\tau_2)a(t_2)) \\
   ( 4e^{-2t_1} dz_1d\tau_1dt_1)( 4e^{-2t_2} dz_2d\tau_2dt_2).
\end{align*}
We suppose that $b$ and $h_\lambda$ are chosen so that $I (\lambda, \phi, g) = 0$ unless $d(g, e)\leq1$.
For $g_0\in G_0$, we shall study the integral on $g_0 \sT$:
\begin{align*}
    \langle\psi,b\phi \rangle_{g_0\sT}:=\int_{\BC\times\BR\times\BR} \overline{b\phi}(z,\tau,t)\psi(g_0n(z,\tau)a(t)) (4e^{-2t})dzd\tau dt.
\end{align*}

\begin{proposition}\label{prop: amp ineq}
    Suppose $\cT\in\cH^S$ and $\phi\in\cS(\BC\times\BR\times\BR)$. We have
    \begin{align}\label{eq: amp ineq}
        \left| \langle \cT\psi,b\phi\rangle_{g_0\sT}\right|^2\ll \sum_{\gamma\in \mathbf{Z}(F)\backslash\mathbf{G}(F)} \left| (\cT*\cT^*)(\gamma)I(\lambda,\phi,g_0^{-1}\gamma g_0) \right|.
    \end{align}
\end{proposition}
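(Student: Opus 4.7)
The plan is to apply an integrated pre-trace formula on the compact quotient $X$ with an adelically constructed test function, then exploit positivity on the spectral side and the triangle inequality on the geometric side.

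First I would build a test function $f \in C_c^\infty(\bG(\BA)/\bZ(\BA))$ as the product of $k_\lambda$ at the archimedean place $v_0$, the normalized characteristic function of $K_v$ at each other archimedean place $v\neq v_0$, $\cT*\cT^*$ at the places outside $S$, and $\mathbf{1}_{K_v}/\mathrm{vol}(K_v)$ at the finite places in $S$. Since the spectrum on $X$ is discrete, the standard pre-trace formula gives
\begin{align*}
\sum_{\gamma\in \bZ(F)\backslash\bG(F)} f(x^{-1}\gamma y) \;=\; \sum_\pi h_\lambda(\lambda_\pi)\,|\lambda_{\cT}(\pi)|^2\, \phi_\pi(x)\,\overline{\phi_\pi(y)},
\end{align*}
where the sum on the right is over an orthonormal basis of joint eigenforms of the Laplacian and of $\cH^S$, and $\lambda_{\cT}(\pi)$ is the eigenvalue of $\cT$ on $\phi_\pi$. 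The key feature is that $\cT*\cT^*$ contributes $|\lambda_{\cT}(\pi)|^2$ rather than $\lambda_{\cT}(\pi)$, ensuring non-negativity at every place of the spectrum.

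Next I would integrate both sides against $\overline{b\phi(x)}\,b\phi(y)$, with $x=g_0 n(z_1,\tau_1)a(t_1)\cdot o$ and $y=g_0 n(z_2,\tau_2)a(t_2)\cdot o$ and measure $4e^{-2t_i}\,dz_i\,d\tau_i\,dt_i$. On the spectral side every summand is non-negative because $h_\lambda=(h_\lambda^0)^2\geq 0$ and $|\lambda_{\cT}(\pi)|^2\geq 0$, so the entire sum dominates the single term coming from $\psi$. Since $h(0)=1$ forces $h_\lambda(\lambda)\geq 1$, this term is at least $|\lambda_{\cT}(\psi)|^2\,|\langle \psi, b\phi\rangle_{g_0\sT}|^2 = |\langle \cT\psi, b\phi\rangle_{g_0\sT}|^2$, which is the left-hand side of \eqref{eq: amp ineq}.

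On the geometric side, after exchanging the $\gamma$-sum with the double integral, each summand factors: the non-archimedean components of $f$ depend only on the finite-adelic projection of $\gamma$ and are independent of the tube variables, yielding the scalar $(\cT*\cT^*)(\gamma)$, while the archimedean part becomes, by $K_0$-bi-invariance of $k_\lambda$ and the fact that $x^{-1}\gamma y$ has archimedean $v_0$-component $a(-t_1)n(-z_1,-\tau_1)\,g_0^{-1}\gamma g_0\,n(z_2,\tau_2)a(t_2)$, exactly $I(\lambda,\phi,g_0^{-1}\gamma g_0)$. Applying $\bigl|\sum\bigr|\leq \sum|\cdot|$ to the $\gamma$-sum then yields \eqref{eq: amp ineq}. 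The main technical point to verify is this factorization: one must check that $f$ has been arranged to be bi-$K_v$-invariant at every finite $v$ and bi-$K_v$-invariant at every archimedean $v\neq v_0$, so that the non-archimedean component of $f(x^{-1}\gamma y)$ truly reduces to $(\cT*\cT^*)(\gamma)$ independently of the tube variables. Once the test function is set up properly, the positivity and unfolding steps are routine.
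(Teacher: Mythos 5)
Your proposal is correct and is essentially the same argument the paper gives: build the kernel $K(x,y)=\sum_\gamma k_\infty(\cT*\cT^*)(x^{-1}\gamma y)$, expand spectrally, integrate against $\overline{b\phi}\times b\phi$ over the tube, drop all spectral terms but $\psi$ using the positivity of $h_\lambda=(h_\lambda^0)^2$ and of $|\lambda_\cT(\pi)|^2$, and apply the triangle inequality to the geometric side. Your extra remark that $h(0)=1$ and $h\geq 0$ give $h_\lambda(\lambda)\geq 1$ is a correct and slightly more explicit justification for why the $\psi$-term alone already dominates the stated left-hand side.
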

\begin{proof}
    Consider the function 
    \begin{align*}
        K(x,y)= \sum_{\gamma\in \mathbf{Z}(F)\backslash\mathbf{G}(F)} k_\infty (\cT*\cT^*)(x^{-1}\gamma y)
    \end{align*}
    on $X\times X$, where $k_\infty$ is a compactly supported and $K_\infty$-bi-invariant function on $G_\infty$ defined by $k_\infty(x_\infty) = k_\lambda(x_{v_0})$. One has the spectral decomposition $L^2(X) = \bigoplus_i \BC\psi_i$.
    Here $\psi_i$'s are Hecke-Maass forms on $X$ with spectral parameters $\lambda_i$, which form an orthonormal basis of $L^2(X)$ and $\psi$ is one of them. Then by \cite{selberg1956harmonic}, the kernel function $K(x,y)$ has a spectral expansion $$K(x,y) = \sum_i h_\lambda(\lambda_i)\cT\psi_i(x)\overline{\cT\psi_i(y)}.$$
    If we integrate it against $\overline{b\phi}\times b\phi$ on $g_0\sT \times g_0\sT$ and write everything under the Iwasawa coordinate \eqref{eq: Iwa coord}, we obtain
    \begin{align*}
         \sum_i h_\lambda(\lambda_i)\left| \langle \cT\psi,b\phi\rangle_{g_0\sT}\right|^2=\sum_{\gamma\in \bZ(F)\backslash\mathbf{G}(F)} (\cT*\cT^*)(\gamma)I(\lambda,\phi,g_0^{-1}\gamma g_0).
    \end{align*}
    Since we have $h_\lambda(\lambda_i)\geq 0 $ for all $i$, dropping all terms but $\psi$ completes the proof.
\end{proof}

\subsection{Construction of amplifier}
We shall use the amplifier for $\GL(3)$, which is the same as the one in \cite[\S3]{marshall2015restrictions}. Let $v\in\sP$. We construct an element $T_v\in\cH_v$ that will form part of the amplifier. By \cite[Lemma 3.1]{marshall2015restrictions}, we have the following relation in $\cH_v$:
\begin{align*}
    \Phi_v(1,0,0)*\Phi_v(1,1,0)=\Phi_v(2,1,0)+(q_v^2+q_v+1)\Phi_v(1,1,1).
\end{align*}
This implies that if we define $a(\psi,v)$ and $b(\psi,v)$ by
\begin{align*}
    \Phi_v(1,0,0)\psi=a(\psi,v)q_v\psi,\quad \Phi_v(2,1,0)\psi=b(\psi,v)q_v^2\psi,
\end{align*}
then we cannot have both $|a(\psi,v)|\leq 1/2$ and $|b(\psi,v)|\leq 1/2$. We define
\begin{align}\label{eq: construction of amplifier}
    T_v:=\begin{cases}
        \Phi_v(1,0,0)/a(\psi,v)q_v\quad&\text{ if }|a(\psi,v)|\geq 1/2,\\
        \Phi_v(2,1,0)/b(\psi,v)q_v^2\quad&\text{ otherwise.}
    \end{cases}
\end{align}
It follows that $T_v\psi=\psi$ for all $v\in\sP$. We shall need the following bound for the coefficients in the expansion of $T_vT_v^*$.

\begin{lemma}[\cite{marshall2015restrictions}, Lemma 3.2]\label{lem: bound amplifier}
    Write
    \begin{align*}
        T_vT_v^*=\sum_{a_1\geq a_2\geq a_3}\alpha(a_1,a_2,a_3)\Phi_v(a_1,a_2,a_3).
    \end{align*}
    If $\alpha(a_1,a_2,a_3)\neq0$, then we have
    \begin{align}\label{eq: bound of alpha}
        \alpha(a_1,a_2,a_3)\ll q^{a_3-a_1}
    \end{align}
    where the implied constant is absolute. Moreover, one of the three pairs of inequalities
    \begin{align}\label{eq: ineq for ai}
        \begin{split}
            -1\leq a_1,a_2,a_3\leq 2\quad &\text{ and }\quad a_1+a_2+a_3=2,\\
            -2\leq a_1,a_2,a_3\leq 2\quad &\text{ and }\quad a_1+a_2+a_3=0,\\
            -2\leq a_1,a_2,a_3\leq 1\quad &\text{ and }\quad a_1+a_2+a_3=-2,
        \end{split}
    \end{align}
    holds
\end{lemma}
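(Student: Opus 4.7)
My plan is to prove the lemma by a direct computation in the spherical Hecke algebra of $\GL_3(F_v)$, split according to the two branches of the definition (\ref{eq: construction of amplifier}).

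The first step is to reduce $T_vT_v^*$ to a scalar multiple of a convolution of two $\Phi_v$'s. I would record the elementary identity $\Phi_v(a_1,a_2,a_3)^* = \Phi_v(-a_3,-a_2,-a_1)$, which follows from $\Phi_v(a_1,a_2,a_3)$ being real-valued together with $(K_v\diag(\varpi_v^{a_1},\varpi_v^{a_2},\varpi_v^{a_3})K_v)^{-1} = K_v\diag(\varpi_v^{-a_3},\varpi_v^{-a_2},\varpi_v^{-a_1})K_v$. Combined with $|a(\psi,v)|,|b(\psi,v)|\geq 1/2$, this gives
\[
T_vT_v^* \;=\; \tfrac{1}{|a(\psi,v)|^2 q_v^2}\,\Phi_v(1,0,0)*\Phi_v(0,0,-1)
\quad\text{or}\quad
\tfrac{1}{|b(\psi,v)|^2 q_v^4}\,\Phi_v(2,1,0)*\Phi_v(0,-1,-2),
\]
with an overall scalar $\ll q_v^{-2}$ or $\ll q_v^{-4}$ respectively.

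The second step is to expand each convolution as $\sum\alpha(a_1,a_2,a_3)\Phi_v(a_1,a_2,a_3)$. Here I would invoke the standard combinatorics of the spherical Hecke algebra of $\GL_3(F_v)$ (equivalently, a small case of the $p$-adic Littlewood--Richardson / Hall polynomial rule): the convolution $\Phi_v(\mathbf{a})*\Phi_v(\mathbf{b})$ is supported on double cosets $K_v(\mathbf{c})$ whose coordinates satisfy the total-sum relation coming from the determinant and the Cartan interlacing bounds $c_1\leq a_1+b_1$, $c_3\geq a_3+b_3$, etc. After passing to $\mathbf{G}/\mathbf{Z}$ and choosing canonical representatives (with $a_1\geq a_2\geq a_3$), the list of possible $(a_1,a_2,a_3)$'s is finite and easily enumerated; one checks that every such triple falls into one of the three regions in (\ref{eq: ineq for ai}).

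The third step is to bound the coefficient $\alpha(a_1,a_2,a_3)$ by evaluating the convolution at the representative $\diag(\varpi_v^{a_1},\varpi_v^{a_2},\varpi_v^{a_3})$ and counting factorizations $g=g_1g_2$ with $g_1,g_2$ in the respective supports. Using that $K_v(1,0,0)$ and $K_v(2,1,0)$ decompose into $O(q_v^2)$ and $O(q_v^4)$ left $K_v$-cosets, together with the standard volume estimate $\operatorname{vol}(K_v(c_1,c_2,c_3))\asymp q_v^{c_1-c_3}$ up to bounded factors, one finds that the factorization count compensates the denominator $q_v^2$ or $q_v^4$ up to a factor of $q_v^{a_1-a_3}$; dividing by $\operatorname{vol}(K_v(a_1,a_2,a_3))$ to extract the coefficient then yields (\ref{eq: bound of alpha}).

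The main obstacle is the bookkeeping in the third step: one has to track the precise powers of $q_v$ arising from coset counts and double-coset volumes for each of the (at most eleven) triples $(a_1,a_2,a_3)$ that can occur, especially for Case~2 where the larger double cosets $K_v(2,1,0)$ and $K_v(0,-1,-2)$ produce more boundary terms. This is routine but delicate; all other steps are essentially formal.
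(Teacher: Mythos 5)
Your Step~1 (reducing $T_vT_v^*$ to a scalar multiple of a single convolution $\Phi_v(\mathbf{a})*\Phi_v(-\mathbf{a}^{\mathrm{rev}})$, using $\Phi_v(a_1,a_2,a_3)^*=\Phi_v(-a_3,-a_2,-a_1)$ and $|a(\psi,v)|,|b(\psi,v)|\geq 1/2$) is correct, and Step~2 (interlacing plus the determinant constraint to pin down the finite list of possible $(a_1,a_2,a_3)$, yielding \eqref{eq: ineq for ai}) is a reasonable route and matches the sketch the paper points to. But Step~3, which is where the actual bound \eqref{eq: bound of alpha} lives, has concrete errors and, separately, a genuine gap.

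First, the volume estimate you invoke is wrong. For $\GL_3(F_v)$ one has $\operatorname{vol}(K_v(c_1,c_2,c_3))\asymp q_v^{\langle\lambda,2\rho\rangle}=q_v^{2(c_1-c_3)}$ (e.g.\ $\operatorname{vol}(K_v(1,0,0))=q_v^2+q_v+1$ and $\operatorname{vol}(K_v(1,0,-1))=(q_v^2+q_v)(q_v^2+q_v+1)\asymp q_v^4$), not $q_v^{c_1-c_3}$. Second, with the normalization $\operatorname{vol}(K_v)=1$ used in the paper, the coefficient $\alpha(\mathbf{c})$ is simply the value of the convolution at the representative $\diag(\varpi_v^{c_1},\varpi_v^{c_2},\varpi_v^{c_3})$; it is \emph{not} obtained by dividing anything by $\operatorname{vol}(K_v(\mathbf{c}))$. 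Once you correct the volume exponent and drop the spurious division, the numerology you describe no longer produces $q_v^{a_3-a_1}$ in any consistent reading.

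More importantly, the central claim — that the factorization count compensates $q_v^2$ (resp.\ $q_v^4$) up to a factor $q_v^{a_1-a_3}$ — is exactly the content of the lemma, and your sketch does not supply an argument for it. The crude convolution bound $\alpha(\mathbf{c})\leq\operatorname{vol}(K_v(\mathbf{a}))\operatorname{vol}(K_v(\mathbf{b}))/\operatorname{vol}(K_v(\mathbf{c}))$ is too weak here: for instance in the $\Phi_v(2,1,0)$ case with $\mathbf{c}=(0,0,0)$ it gives $q_v^8$, while the true value is $\operatorname{vol}(K_v(2,1,0))\asymp q_v^4$ and the target after normalization is $\asymp 1$; for $\mathbf{c}=(1,0,-1)$ it again fails to reach the needed $q_v^2$. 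So the bookkeeping is not just delicate — naive coset counting does not close, and one needs the actual structure constants of the Hecke algebra (as in the identity $\Phi_v(1,0,0)*\Phi_v(1,1,0)=\Phi_v(2,1,0)+(q_v^2+q_v+1)\Phi_v(1,1,1)$ that the paper uses to build $T_v$) or a positivity reduction. The paper in fact imports this lemma verbatim from Marshall's $\GL_3$ paper, and the sketch there replaces both branches of $T_v$ by the single positive combination $\Phi_v:=q_v^{-1}\Phi_v(1,0,0)+q_v^{-2}\Phi_v(2,1,0)$, bounding $|\alpha|\leq 2\beta$ where $\Phi_v\Phi_v^*=\sum\beta(\mathbf{c})\Phi_v(\mathbf{c})$ has $\beta\geq 0$, and then controlling $\beta$ by explicit Hecke computations; this is the step your proposal leaves open.
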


\subsection{Estimates of Hecke returns}
In this section, we bound the number of isometries that map the geodesic segment close to itself. If $g\in G_0$, $\fn_1,\fn_2,\fn_3\subset\cO$ are ideals that are only divisible by primes in $\sP$, and $\kappa>0$, we define
\begin{align*}
    &\overline{\cM}(g,\fn_1,\fn_2,\fn_3,\kappa)=\{\gamma\in\bZ(F)\backslash\mathbf{G}(F)\cap K(\fn_1,\fn_2,\fn_3)\mid d(g^{-1}\gamma g,e)\leq1,d(g^{-1}\gamma g,MA)\leq\kappa\},\\
    &\overline{M}(g,\fn_1,\fn_2,\fn_3,\kappa)=|\cM(g,\fn_1,\fn_2,\fn_3,\kappa)|.
\end{align*}

We recall the notation from \S \ref{sec:adele} and \S \ref{sec: Hecke}. Let $v\in\sP$ be a finite place of $F$ with $w,w^\prime$ the places of $E$ above $v$. We have the isomorphism between $\bG(F_v)$ and $\GL(3,F_v)$ and we can identify $\bG(E_v)=\GL(V)(E_{w})\times\GL(V)(E_{w^\prime})$ with $\GL(3,F_v)\times\GL(3,F_v)$. The natural embedding $\bG(F_v)\hookrightarrow\bG(E_v)$ is then given by $g\mapsto (g,{}^tg^{-1})$ from $\GL(3,F_v)$ to $\GL(3,F_v)\times\GL(3,F_v)$.

We will identify $\End_E(V)$, $\GL(V)(E)$, respectively, with $M_3(E)$, $\GL(3,E)$ by choosing a basis for $V$ over E. We let $T_0$ be the subgroup of $\GL(3,\BC)$ generated by $MA$ and the center, so that $T_0$ is the diagonal subgroup. 

We let $S_0$ be a finite subset of $F$-places such that $S\subset S_0$, and the rings of $S_0$-integers $\cO_{S_0}$ and $\cO_{E,S_0}$ are principal ideal domains. We write  $S_{0,E}$ for the set of places of $E$ above $S_0$. We let $\sP_0 = \sP \backslash S_0$ and let $L_0=L\otimes_{\cO_E}\cO_{E,S_0}$. 

Suppose that $\fn_1,\fn_2,\fn_3\subset\cO$ are ideals that are only divisible by primes in $\sP_0$. We let 
\begin{align*}
    \sS=\sS(\fn_1,\fn_2,\fn_3) = S_\infty\cup\{v\text{ finite places of }F\mid  v|\fn_1\fn_2\fn_3 \}.
\end{align*}
Denote by $\sS_E$ the set of places in $E$ that are above $\sS$.
For $B>1$, we let $\cM(g,\fn_1,\fn_2,\fn_3,\kappa,B)$ be the set consisting of $x\in \bG(F)E^\times \subset \GL(3,E)$ such that the following conditions hold.
\begin{enumerate}[(1)]
    \item For the place $w_0$, $d(g^{-1}x_{w_0} g,\BC^\times I_3)\leq 1$ and $d(g^{-1}x_{w_0} g,T_0)\leq\kappa$.
    \item If $w\neq w_0$ is an archimedean place of $E$, then $x_w\in\mathrm{U}(3)z_w$ where $B^{-1}\leq |z_w|_{w}\leq B$. Here
    \begin{align*}
        \mathrm{U}(3) = \{g\in\GL(3,\BC)\mid g^*g =I \}
    \end{align*}
    is the standard compact unitary group.
    \item If $v\notin\sS$, then $x_v\in K_v z_v$ for some $z_v\in E_v^\times$. Here $z_v = 1$ if $v\notin S_{0}$, and $z_v$ satisfies $B^{-1}\leq|z_w|_w\leq B$ if  $v\in S_0$ with $w\in S_{0,E}$ above $v$.
    \item If $v=ww^\prime\in\sP_0$ and $w,w^\prime\in\sS_E$, we have
    \begin{align*}
        (x_w,x_{w^\prime})&\in \GL(3,\cO_{E,w})\begin{pmatrix}\varpi_v^{\mathrm{ord}_v(\fn_1)}&&\\&\varpi_v^{\mathrm{ord}_v(\fn_2)}&\\&&\varpi_v^{\mathrm{ord}_v(\fn_3)}\end{pmatrix}\GL(3,\cO_{E,w})\\
        &\times\GL(3,\cO_{E,w^\prime})\begin{pmatrix}\varpi_v^{\mathrm{ord}_v(\fn_2\fn_3)}&&\\&\varpi_v^{\mathrm{ord}_v(\fn_1\fn_3)}&\\&&\varpi_v^{\mathrm{ord}_v(\fn_1\fn_2)}\end{pmatrix}\GL(3,\cO_{E,w^\prime}).
    \end{align*}
\end{enumerate}
We define $M(g,\fn_1,\fn_2,\fn_3,\kappa,B)=|\cM(g,\fn_1,\fn_2,\fn_3,\kappa,B)|$.
The following results allow us to bound $\overline{\cM}(g,\fn_1,\fn_2,\fn_3,\kappa)$.

\begin{lemma}\label{lem: bound M by L}
    There exists a constant $B>1$ so that the following holds. If $\fn_1,\fn_2,\fn_3\subset\cO$ are ideals that are only divisible by primes in $\sP_0$, then $\overline{M}(g,\fn_1,\fn_2,\fn_3,\kappa)\leq M(g,\fn_1,\fn_2,\fn_3,\kappa,B)$.
\end{lemma}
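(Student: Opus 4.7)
The strategy is to construct an injection $\overline{\cM}(g,\fn_1,\fn_2,\fn_3,\kappa) \hookrightarrow \cM(g,\fn_1,\fn_2,\fn_3,\kappa,B)$ by lifting each class $\gamma \in \bZ(F)\backslash\bG(F)$ to a suitably rescaled element of $\bG(F)E^\times \subset \GL(3,E)$. First I would pick a representative $\tilde\gamma \in \bG(F)$ satisfying the distance conditions $d(g^{-1}\tilde\gamma g,e) \leq 1$ and $d(g^{-1}\tilde\gamma g, MA) \leq \kappa$ at $v_0$, together with an adelic central element $\mu \in \bZ(\BA_f)$ such that $\tilde\gamma\mu \in K(\fn_1,\fn_2,\fn_3)$; both exist by definition of $\overline{\cM}$. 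The target element will be $x := z\tilde\gamma \in \GL(3,E)$ for a global scalar $z \in E^\times$ chosen in the next step.

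The crux is producing such a $z$. At each finite place $v \notin S_0$, I want the local scalar $z_v \mu_v^{-1} \in E_v^\times$ to be a unit, which will remove the central twist from conditions (3) and (4). Globally, this amounts to finding $z \in E^\times$ whose fractional $\cO_{E,S_0}$-ideal agrees with that of $\mu$; such a $z$ exists precisely because $\cO_{E,S_0}$ is a principal ideal domain by hypothesis. The residual ambiguity is multiplication by $\cO_{E,S_0}^\times$, which by Dirichlet's unit theorem is a finitely generated abelian group whose archimedean logarithmic images span a lattice in the appropriate hyperplane. By adjusting $z$ within this ambiguity, I can bound $|z|_w$ uniformly at every $w \in S_{0,E}$ and at every archimedean place of $E$ by an absolute constant $B$ independent of $\fn_1,\fn_2,\fn_3$, using the product formula together with the compactness of $\bZ(F_v)$ at archimedean $v \neq v_0$.

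With $z$ in hand, I would verify conditions (1)--(4) of $\cM$ for $x$. Condition (1) at $w_0$ is immediate: both $\BC^\times I_3$ and $T_0$ are closed under scaling by $\BC^\times$, so $d(g^{-1}x_{w_0}g, \BC^\times I_3) = d(g^{-1}\tilde\gamma_{w_0}g, \BC^\times I_3) \leq d(g^{-1}\tilde\gamma_{w_0}g, e) \leq 1$, and similarly $d(g^{-1}x_{w_0}g, T_0) \leq d(g^{-1}\tilde\gamma_{w_0}g, MA) \leq \kappa$ using $MA \subset T_0$. Condition (2) uses that $G_v$ is compact for archimedean $v \neq v_0$ and hence conjugate in $\GL(3,\BC)$ to the standard $\mathrm{U}(3)$, combined with the uniform bound on $|z_w|_w$. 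Condition (3) is immediate from the construction of $z$. For condition (4) at a split $v=ww' \in \sP_0 \cap \sS$, the embedding $\bG(F_v)\hookrightarrow \bG(E_v) = \GL(3,F_v)\times \GL(3,F_v)$ is $h \mapsto (h, {}^th^{-1})$, and the identity ${}^t\mathrm{diag}(\varpi^{n_1},\varpi^{n_2},\varpi^{n_3})^{-1} = \varpi^{-(n_1+n_2+n_3)}\mathrm{diag}(\varpi^{n_2+n_3}, \varpi^{n_1+n_3}, \varpi^{n_1+n_2})$ translates the single $v$-adic Hecke condition on $\tilde\gamma_v\mu_v$ into the pair of double-coset conditions on $(x_w, x_{w'})$ demanded in (4).

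The map $\gamma \mapsto z\tilde\gamma$ is injective on classes: if $z_1\tilde\gamma_1 = z_2\tilde\gamma_2$ in $\GL(3,E)$ with $\tilde\gamma_i \in \bG(F)$ and $z_i \in E^\times$, then $\tilde\gamma_1\tilde\gamma_2^{-1}$ is the scalar matrix $(z_2/z_1)I_3 \in \bG(F)\cap E^\times = \bZ(F)$, so the classes in $\bZ(F)\backslash\bG(F)$ coincide. The main anticipated obstacle is the second step: assembling the local central corrections into a single global $z \in E^\times$ while retaining uniform control over $|z|_w$ at all places above $S_0 \cup S_\infty$. This is precisely where the PID hypothesis on $\cO_{E,S_0}$ is indispensable (without it, the class group of $E$ would obstruct the existence of a single global $z$), and where Dirichlet's theorem is needed to absorb the unit ambiguity into a bound $B$ that is independent of the ideals $\fn_i$.
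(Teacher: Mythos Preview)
Your approach is the same as the paper's and is essentially correct: lift $\gamma$ to $\tilde\gamma\in\bG(F)$, then rescale by a global $z\in E^\times$ produced via the PID hypothesis on $\cO_{E,S_0}$ and normalized at $S_{0,E}\cup S_\infty$ using the unit group. The injectivity argument and the verification of condition (1) are fine.

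One point needs correction. Your prescription for $z$ (that $z_v\mu_v^{-1}$ be a unit at every finite $v\notin S_0$) handles condition (3), but at $v=ww'\in\sS\cap\sP_0$ it only gives
\[
x_{w'}=z_{w'}\,{}^t\tilde\gamma_v^{-1}\in K_v(-n_3,-n_2,-n_1)=\varpi_v^{-(n_1+n_2+n_3)}K_v(n_2+n_3,n_1+n_3,n_1+n_2),
\]
which is off from condition (4) by exactly the scalar $\varpi_v^{n_1+n_2+n_3}$ you yourself identified. The fix is immediate: enlarge the target fractional $\cO_{E,S_0}$-ideal for $z$ by the factor $\prod_{v\in\sS}\fp_{w'}^{\mathrm{ord}_v(\fn_1\fn_2\fn_3)}$; this ideal is still principal by the same PID hypothesis, and the Dirichlet unit adjustment at $S_{0,E}\cup S_\infty$ goes through unchanged. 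The paper's own proof is terser than yours and simply asserts that a suitable $z$ exists without writing down its valuations, so once this is corrected your argument is actually more explicit than the original.
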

\begin{proof}
    Suppose that $\gamma\in \overline{\cM}(g,\fn_1,\fn_2,\fn_3,\kappa)$. There exists $x^\prime \in \bG(F)$ that projects to $\gamma$. It is clear that the condition (1) at the place $w_0$ is invariant under the center elements, and, therefore, $x^\prime$ satisfies (1). Moreover, $x^\prime_v\in K_v$ for almost all places $v\notin\sS$. If $v=ww^\prime$ is a place in $\sS=\sS(\fn_1,\fn_2,\fn_3)$, we have $$(x'_w,x'_{w^\prime})\in K_v(\operatorname{ord}_{v}(\fn_1),\operatorname{ord}_{v}(\fn_2),\operatorname{ord}_{v}(\fn_3))\times K_v(-\operatorname{ord}_{v}(\fn_1),-\operatorname{ord}_{v}(\fn_2),-\operatorname{ord}_{v}(\fn_3)) .$$ By our assumption on the class number for $S_{0,E}$, there exists an absolute $B>0$ and one can find $z\in E^\times$ so that $x=zx^\prime$ satisfies the conditions (2)--(4). Because distinct elements of $\overline{\cM}(g,\fn_1,\fn_2,\fn_3,\kappa)$ have distinct elements of $\cM(g,\fn_1,\fn_2,\fn_3,\kappa,B)$ assigned to them, the result follows.
\end{proof}

\begin{lemma}\label{lem: in cubic}
    Let $\Omega\subset G_0$ be a compact set. There exists $0<C<1$, which depends on $L$ and $\Omega$, such that the following holds. If $g\in\Omega$, $n=\mathrm{Nm}(\fn_1\fn_2\fn_3)$ and $0<\kappa<1$, the set $\cM(g,\fn_1,\fn_2,\fn_3,\kappa,B)$ is contained in a (commutative) cubic \'etale algebra over $E$ in $M_3(E)$ as long as
    \begin{align}\label{eq: ineq for eps X C}
        \kappa n\leq C.
    \end{align}
\end{lemma}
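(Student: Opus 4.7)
The plan follows the standard commutator argument via the product formula (cf.\ \cite{iwaniec1995norms, marshall2015restrictions}): show that any two elements of $\cM(g,\fn_1,\fn_2,\fn_3,\kappa,B)$ commute once $\kappa n$ is small enough, and then extract a cubic étale algebra containing them. Fix $x,y\in\cM$ and set $\xi=xy-yx\in M_3(E)$; the first task is to bound $\|\xi_w\|$ at every place $w$ of $E$ so that the product formula $\prod_w|\alpha|_w=1$ can be applied to each entry $\alpha\in E$ of $\xi$.

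A preliminary step is to locate the global scales. Writing $x=z\tilde x$ with $\tilde x\in\bG(F)$ and $z\in E^\times$, conditions (2)--(4) in the definition of $\cM$ fix $|z_w|_w\asymp 1$ at archimedean $w\neq w_0$ and at $v\in S_0$, while the determinants of the Hecke double cosets in (4) pin down $\operatorname{ord}_v(z_w)+\operatorname{ord}_v(z_{w^\prime})\asymp\operatorname{ord}_v(n)$ at split $v\in\sP_0\cap\sS$. Feeding this into the product formula for $z$ forces the unconstrained scale $|z_{w_0}|_\BC\asymp n^{1/2}$. At $w_0$, left-invariance of the metric $d$ and condition (1) give a decomposition $g^{-1}x_{w_0}g=D^x+\epsilon^x$ with $D^x\in T_0$, $\|D^x\|_\BC=O(n^{1/2})$, and $\|\epsilon^x\|_\BC=O(n^{1/2}\kappa)$, and similarly for $y$. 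Since $[D^x,D^y]=0$, bilinear expansion yields $\|\xi_{w_0}\|_\BC=O_\Omega(n\kappa)$. At other archimedean $w$, condition (2) bounds $\|\xi_w\|_\BC=O(B)$; at finite $v$, conditions (3)--(4) show $x_v,y_v$ are integral up to bounded rescaling at $v\in S_0$, so $\|\xi_v\|_v\leq O_B(1)$, and $\leq 1$ outside a fixed finite set. Remembering that $|\cdot|_w=|\cdot|_\BC^2$ at complex places, multiplying the local bounds produces
\begin{align*}
1=\prod_w|\alpha|_w\leq C_0(\kappa n)^2
\end{align*}
for every nonzero entry $\alpha$ of $\xi$, with $C_0=C_0(L,\Omega,B)$. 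Choosing $C=C_0^{-1/2}$, the hypothesis $\kappa n\leq C$ forces $\xi=0$, so $x$ and $y$ commute.

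The $E$-subalgebra $A\subseteq M_3(E)$ generated by the commuting set $\cM$ is commutative, hence $\dim_E A\leq 3$ by Schur's theorem on commutative subalgebras of $M_n$. To see that $A$ is étale, observe that every element of $A$ is a polynomial in the $x\in\cM$ and hence close at $w_0$ to the corresponding polynomial in the diagonals $D^x$, which is itself diagonal; a nilpotent element of $A$ would therefore be close to a nilpotent diagonal matrix (hence to zero) at $w_0$ and bounded at the other places, and so vanishes by the product formula. Thus $A$ is reduced, and can be embedded in a maximal commutative étale $E$-subalgebra of $M_3(E)$ of dimension exactly $3$. The main obstacle is the bookkeeping in Step 1: correctly tracking the global scale $|z_{w_0}|_\BC\asymp n^{1/2}$ and coupling it with the infinitesimal linearization of $d$ near $T_0$ to obtain the clean bound $\|\xi_{w_0}\|_\BC=O(n\kappa)$, so that after the squaring at complex places the estimate matches the stated threshold $\kappa n\leq C$.
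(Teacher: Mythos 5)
Your commutator argument tracks the paper's almost exactly: bound $\xi = xy - yx$ at every place, use condition (1) to get the $O(\kappa n)$ bound at $w_0$ (squaring to $(\kappa n)^2$ since $w_0$ is complex), use (2)--(4) to bound the other places, and invoke the product formula. Your extra bookkeeping with $|z_{w_0}|_\BC \asymp n^{1/2}$ and the decomposition $g^{-1}x_{w_0}g = D^x + \epsilon^x$ is a reasonable way to reconstruct the paper's implicit scale normalization; the two routes are equivalent. (One cosmetic point: choosing $C = C_0^{-1/2}$ only gives $1 \le 1$ at the boundary; you want $C < C_0^{-1/2}$ strictly.)

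The étale step, however, has a real gap. The paper's argument is that $\bG$ is anisotropic over $F$, so $\bG(F)$ has no nontrivial unipotent elements; by Jordan--Chevalley every element of $\bG(F)E^\times$ is therefore semisimple, and a commuting family of semisimple matrices is simultaneously diagonalizable over $\BC$, so the $E$-subalgebra it generates is reduced. Your replacement argument --- that a nilpotent $\alpha \in A$ is ``close at $w_0$ to a nilpotent diagonal matrix, hence close to zero, and bounded at the other places, so vanishes by the product formula'' --- does not go through. A general $\alpha \in A$ is a polynomial $p(x_1,\dots,x_k)$ with coefficients in $E$, and those coefficients can be arbitrarily large at $w_0$ and at the finite places. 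The error $\|p(x_\bullet)_{w_0} - p(gD^{x_\bullet}g^{-1})_{w_0}\|$ scales with the coefficients, so ``close'' has no absolute meaning; likewise ``bounded at the other places'' fails, so the product formula cannot be applied to the entries of $\alpha$. Moreover there is no reason the diagonal polynomial $p(gD^{x_\bullet}g^{-1})$ should itself be nilpotent just because $\alpha$ is. You should instead invoke anisotropy as the paper does: $\bG(F)$ contains no unipotents (else $\bG$ would contain a split unipotent subgroup, hence a split torus), so each $x \in \cM$ is semisimple, and commuting semisimple elements generate a reduced commutative algebra, which embeds in a maximal (cubic) \'etale $E$-subalgebra of $M_3(E)$.
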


\begin{proof}
    We first prove that the matrices in $\cM(g,\fn_1,\fn_2,\fn_3,\kappa,B)$ are commutative with each other. For the commutativity, it suffices to check that $xy-yx$ is the zero matrix for any $x,y\in \cM(g,\fn_1,\fn_2,\fn_3,\kappa,B)$. Suppose otherwise. Let $x=(x_{ij}),y=(y_{ij})$ be one noncommutative pair and let $A=(a_{ij})_{1\leq i,j\leq 3}=xy-yx$. By the product formula and the conditions (2)--(4), it may be seen that $|\det x|_{w_0}\asymp|\det y|_{w_0}\asymp n^3$. So $|\det (n^{-1/2}x)|_{w_0}\asymp|\det (n^{-1/2}y)|_{w_0}\asymp 1$ and $|x_{ij}|_{w_0},|y_{ij}|_{w_0}\ll n$. By the condition (1), there are diagonal matrices $z_1,z_2\in T_0$ such that $d(g^{-1}x g,z_1)\leq\kappa$ and $d(g^{-1}yg,z_2)\leq\kappa$. Thus $x = gz_1g^{-1} + O(\kappa\sqrt{n})$ and $y = gz_2g^{-1} + O(\kappa\sqrt{n})$ where the extra $\sqrt{n}$ factors come from the determinants of $x$ and $y$. Since the absolute value at the complex archimedean place $w_0$ is the square of the usual distance, $|a_{ij}|_{w_0}\ll(\kappa \sqrt{n})^2 n = \kappa^2 n ^2$ for any $i,j$. By the condition (2), $x,y$ move in a compact set, and so $|a_{ij}|_{w}\ll 1$ for any other archimedean place $w$ of $E$. Since $A\neq 0$, there is at least one $a_{ij}\neq 0$. By the conditions (3) and (4), it may be seen that $\prod_{w<\infty}|a_{ij}|_w\ll 1$. By the product formula, $1=|a_{ij}|_E\ll \kappa^2 n^2$. Hence, we get a contradiction if a condition of the form \eqref{eq: ineq for eps X C} holds for $C$ sufficiently small.

    Because $\bG$ is anisotropic over $F$, all the elements in $\bG(F)E^\times$ are semisimple by the Jordan-Chevalley decomposition, and so are elements in $\cM(g,\fn_1,\fn_2,\fn_3,\kappa,B)$. By the commutativity, they can be diagonalized simultaneously over $\BC$, which completes the proof.
\end{proof}

\begin{proposition}\label{prop: Hecke return}
    Let $\Omega\subset G_0$ be a compact set. There is a constant $0<C=C(\Omega,L)<1$ such that the following holds. 
    If $g\in\Omega$, $\fn_1,\fn_2,\fn_3\subset\cO$ are ideals divisible only by primes in $\sP_0$ with $n=\mathrm{Nm}(\fn_1\fn_2\fn_3)$, and $0<\kappa\leq Cn^{-1}$, then $\overline{M}(g,\fn_1,\fn_2,\fn_3,\kappa)\ll_{\Omega,\epsilon}n^\epsilon$.
\end{proposition}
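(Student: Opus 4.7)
My plan is to combine the two lemmas above with a divisor-bound argument, parallel to the counting in \cite{marshall2015restrictions} and \cite{hou2024restrictions}. First, I apply Lemma \ref{lem: bound M by L} to reduce the problem to bounding $M(g,\fn_1,\fn_2,\fn_3,\kappa,B)$ for the absolute constant $B$ produced there. Shrinking $C$ if necessary so that the hypothesis of Lemma \ref{lem: in cubic} holds, I may assume $\cM:=\cM(g,\fn_1,\fn_2,\fn_3,\kappa,B)$ lies in a commutative cubic \'etale $E$-algebra $L\subset M_3(E)$ depending on $(g,\fn_1,\fn_2,\fn_3)$. Since the elements of $\cM$ are simultaneously diagonalizable, each $x\in\cM$ is determined by an element of $L$, and the task becomes counting elements of $L$ subject to the arithmetic and archimedean constraints coming from conditions (1)--(4).

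Next I would extract the arithmetic content by passing to ideals. Let $\cO_L$ be the ring of integers of $L$ and, for $x\in\cM$, let $(x)\subset L$ be the fractional ideal generated by $x$. Condition (3), after absorbing the bounded $S_0$-unit scalars $z_v$ into a uniformly bounded set of ambiguities, implies that $(x)$ is supported on primes of $L$ above $\sS\setminus S_0$, i.e., above rational primes dividing $\fn_1\fn_2\fn_3$. At a split place $v=ww'\in\sP_0\cap\sS$, condition (4) forces the multiset of valuations of $x$ at the three places of $L$ above $w$ to be $\{\operatorname{ord}_v\fn_1,\operatorname{ord}_v\fn_2,\operatorname{ord}_v\fn_3\}$. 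Consequently the number of possibilities for the ideal $(x)$ is at most
\[
\prod_{v\mid \fn_1\fn_2\fn_3} 3!\ \ll_\epsilon\ n^{\epsilon},
\]
by the classical divisor bound $d(n)\ll_\epsilon n^\epsilon$.

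Finally, for each admissible ideal $\fa$ arising this way, any two elements $x,x'\in\cM$ with $(x)=(x')=\fa$ satisfy $x=ux'$ for a unit $u\in\cO_{L,S_{0,E}}^\times$; conditions (1)--(2) imply that $u$ has archimedean size bounded in terms of $\Omega$ and $B$ at every place of $L$. Counting lattice points in a fixed $\BZ$-basis of $\cO_{L,S_{0,E}}$, the number of such units is $O_{\Omega}(1)$, so multiplying with the divisor bound from Step (3) yields $M(g,\fn_1,\fn_2,\fn_3,\kappa,B)\ll_{\Omega,\epsilon} n^{\epsilon}$, as required.

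The main obstacle I expect is making the final step genuinely uniform in the \'etale algebra $L$, which varies with $(g,\fn_1,\fn_2,\fn_3)$; a priori the regulator of $\cO_{L,S_{0,E}}^\times$ could be small, producing many units in a bounded archimedean region. The rescue is that $L$ is cut out inside $M_3(E)$ by generators with entries controlled by $g\in\Omega$ via condition (1), so the relevant lattice $\cO_{L,S_{0,E}}\subset L_\infty$ has covolume uniformly bounded below through the comparison of $\|\cdot\|$ on $M_3(\BC)$ with the trace form on $L$, yielding the required uniform $O_{\Omega}(1)$ count of units in a fixed archimedean box.
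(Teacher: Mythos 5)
Your proof takes the same route as the paper: reduce via Lemma~\ref{lem: bound M by L}, localize into a cubic \'etale algebra via Lemma~\ref{lem: in cubic}, and then count elements in that algebra with fixed norm sizes at all places and bounded archimedean images. The paper simply asserts at that point that the count is $\ll_\epsilon n^\epsilon$ uniformly in the algebra; you try to unwind this into a divisor bound plus a unit count, and your closing paragraph about uniformity in the algebra is a worthwhile addition that the paper does not address.

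However, one intermediate claim is incorrect. You assert that condition~(4) ``forces the multiset of valuations of $x$ at the three places of $L$ above $w$ to be $\{\operatorname{ord}_v\fn_1,\operatorname{ord}_v\fn_2,\operatorname{ord}_v\fn_3\}$.'' This confuses the Cartan (elementary divisor) parameters of $x_w$ with its eigenvalue valuations: a matrix in $K_v\,\diag(\varpi^{a_1},\varpi^{a_2},\varpi^{a_3})\,K_v$ only has eigenvalue valuations $\mu_1\ge\mu_2\ge\mu_3\ge0$ constrained by $\sum\mu_i=\sum a_i$ and the majorization $\mu_1+\cdots+\mu_k\le a_1+\cdots+a_k$; they need not equal $(a_1,a_2,a_3)$. (Already in $\GL_2$, $\left(\begin{smallmatrix}1&1\\-1&\varpi^2-1\end{smallmatrix}\right)$ lies in $K\diag(1,\varpi^2)K$ but has both eigenvalue valuations equal to $1$.) Moreover, $L\otimes_E E_w$ need not split completely, so there may not be three places of $L$ above $w$ at all. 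The consequent bound $\prod_v 3!$ is therefore not justified. What \emph{is} forced is the valuation of $N_{L/E}(x)=\det x$ at each place, i.e.\ the norm ideal $N_{L/E}\big((x)\big)$ is determined up to bounded $S_0$-ambiguities and has norm $\ll n^3$. The number of (fractional $S_{0,E}$-)ideals of $\cO_L$ with a given norm to $\cO_E$ is then $\ll_\epsilon n^\epsilon$ by the divisor bound for ideals; this salvages the argument and gives the same conclusion. With that correction and the unit-count step (together with the uniformity discussion you already flag), the proof is in order and matches the paper's intent.
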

\begin{proof}
    By Lemma \ref{lem: bound M by L}, it suffices to show that ${M}(g,\fn_1,\fn_2,\fn_3,\kappa,B)\ll n^\epsilon$. Lemma \ref{lem: in cubic} implies that there is $C>0$ such that if $\kappa\leq Cn^{-1}$, then ${\cM}(g,\fn_1,\fn_2,\fn_3,\kappa,B)$ must be contained in a cubic \'etale algebra $H$ over $E$ in $M_3(E)$. Note that the determinant on $M_3(E)$ restricted to $H$ agrees with the norm map from $H$ to $E$, which we denote by $\mathrm{N}_{H/E}$. Those $x\in\cM(g,\fn_1,\fn_2,\fn_3,\kappa,B)$ satisfies the conditions:
    \begin{enumerate}[(1)]
        \item $|\mathrm{N}_{H/E}(x)|_{w_0}\asymp n^3$, and the image of $x$ under any archimedean embedding  of $H$ above $w_0$ is $\ll n$;
        \item If $w\neq w_0$ is an archimedean place, then the image of $x$ under any archimedean embedding of $H$ above $w$ is $\ll 1$;
        \item If $w\notin\sS_E$ is nonarchimedean, then 
        \begin{align*}
            |\mathrm{N}_{H/E}(x)|_{w}\begin{cases}
                =1,\quad\text{ if }w\notin S_{0,E},\\
                \asymp1,\quad\text{ if }w\in S_{0,E};\\
            \end{cases}
        \end{align*}
        \item If $v=ww^\prime\in\sP_0$ and $w,w^\prime\in\sS_E$, then $|\mathrm{N}_{H/E}(x)|_{w}=|n|_w$ and $|\mathrm{N}_{H/E}(x)|_{w'}=|n|_{w'}^2$.
    \end{enumerate}
    The amount of numbers in $H$ satisfying the above conditions (1)--(4) may easily be seen to be $\ll n^\epsilon$, uniformly in $H$.
\end{proof}

\subsection{Proof of main theorem}\label{subsec:proof of main thm}
To prove Theorem \ref{thm:main theorem}, it suffices to bound the $L^2$-norm of $b(z,\tau,t)\psi(g_0n(z,\tau)a(t))$ for all $g_0$ moving in a compact set $\Omega\subset G_0$. Let $0<\epsilon'\leq 10^{-6}$, and let $\beta$ be a parameter satisfying $\lambda^{\epsilon'}\leq\beta\leq\lambda^{1-\epsilon'}$. We shall need the following bounds for $I(\lambda,\phi,g)$.

\begin{proposition}\label{prop: bound for I}
    Suppose that $g\in G_0$ with $d(g,e)\leq1$, $\lambda^{\epsilon'}\leq\beta\leq\lambda^{1-\epsilon'}$ and $\phi \in \cS(\overline{\BD}\times\BR) $  satisfies $\|\phi \|_2 = 1$. 
    \begin{enumerate}[(1)]
        \item If $\supp(\sF_t\phi) \subset \overline{\BD}\times (\BR\backslash \pm[\lambda-\beta, \lambda+\beta])$, then we have
        \begin{align}\label{eq: bd for I(e)}
            I(\lambda,\phi,e)\ll_\epsilon \beta^{-1/2+\epsilon}.
        \end{align}
        \item Fix $\epsilon_0>0$. If $\supp(\sF_t\phi) \subset \overline{\BD}\times (\pm[\lambda-\beta,\lambda+\beta])$, and $g$ satisfies $d(g,MA)\geq \lambda^{-1/2+\epsilon_0}\beta^{1/2}$, then we have
        \begin{align}\label{eq: bd for I(g)}
            I(\lambda,\phi,g)\ll_{\epsilon_0,A} \lambda^{-A}.
        \end{align}
    \end{enumerate}
\end{proposition}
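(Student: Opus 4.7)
The plan is to reduce $I(\lambda,\phi,g)$ to an oscillatory integral in which the phase is governed by the Iwasawa $A$-function on $\BH_\BC^2$, and then bound it by Plancherel in case (1) and by non-stationary phase in case (2).

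For part (1), set $B(n(z,\tau)a(t)\cdot o):=(b\phi)(z,\tau,t)$, regarded as a compactly supported function on $\BH_\BC^2$. Because $k_\lambda$ is $K_0$-bi-invariant with Harish-Chandra transform $h_\lambda$, convolution with $k_\lambda$ acts as multiplication by $h_\lambda(s)$ on the Helgason transform, and the Plancherel formula \eqref{eq: PL formula} gives
\begin{align*}
    I(\lambda,\phi,e)=\int_0^\infty h_\lambda(s)\int_{M\backslash K_0}\bigl|\widehat B(s,\overline k)\bigr|^2\,d\overline k\,d\nu(s).
\end{align*}
Since $h_\lambda$ concentrates at $s=\pm\lambda$ with width $O(1)$, the task is to bound $\int_{M\backslash K_0}|\widehat B(s,\overline k)|^2\,d\overline k$ for $s$ close to $\pm\lambda$. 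Taylor-expanding $A(\overline k\,n(z,\tau)a(t)\cdot o)$ in $(z,\tau)$ at $0$ shows that the phase is $t$ plus an $\overline k$-dependent correction of size $O(\lambda^{-1/2})$ on the support of $b\phi$, so that for $\overline k$ near the identity class $\widehat B(s,\overline k)$ is essentially $\sF_t(b\phi)$ at a nearby frequency, multiplied by a smooth amplitude. The hypothesis $\supp(\sF_t\phi)\subset\overline{\BD}\times(\BR\setminus\pm[\lambda-\beta,\lambda+\beta])$ then kills the contribution of $\overline k$ close to the identity class, while for $\overline k$ farther away we integrate by parts in $(z,\tau)$ over $\BD$; combined with the asymptotic $d\nu(s)\asymp s^3\,ds$, this produces the bound $\beta^{-1/2+\epsilon}$.

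For part (2), invert the Harish-Chandra transform and substitute the integral formula \eqref{eq:HC K int formula} for $\varphi_s$ to obtain
\begin{align*}
    k_\lambda(x)=\int_0^\infty\int_{M\backslash K_0}h_\lambda(s)\exp\bigl((1+is)A(\overline k x)\bigr)\,d\overline k\,d\nu(s).
\end{align*}
Inserting this into the defining expression for $I(\lambda,\phi,g)$ yields a multiple oscillatory integral whose phase is $(1+is)A(\overline k\,g_1^{-1}gg_2\cdot o)$ and whose amplitude is supported where the variables $(z_i,\tau_i,t_i)$ lie in the tube. Since $MA$ is exactly the subgroup leaving the geodesic $\ell$ invariant (so that the $\overline k$-gradient of this phase vanishes when $g_1^{-1}gg_2\in MA$), the hypothesis $d(g,MA)\geq \lambda^{-1/2+\epsilon_0}\beta^{1/2}$ translates into a quantitative lower bound on the $\overline k$-derivative of the phase. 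After multiplying by $s\sim\lambda$, the effective phase derivative becomes $\gg\lambda^{1/2+\epsilon_0}\beta^{1/2}$, which dominates both the tube scale $\lambda^{-1/2}$ and the spectral width $\beta$. Repeated integration by parts in $\overline k$ then delivers the $\lambda^{-A}$ decay.

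The main technical obstacle is to supply uniform pointwise and derivative estimates for $A(\overline k\cdot x)$ on the support of $b\phi$---in particular, a quantitative lower bound on the $\overline k$-gradient of $A$ transverse to the stabilizer of the geodesic $\ell$, together with upper bounds on higher derivatives in all of the variables $z$, $\tau$, $t$, and $\overline k$. This analysis is the purpose of \S\ref{sec:der of A} and forms the backbone of the whole argument; once it is in place, the oscillatory-integral bounds in both parts reduce to Plancherel plus essentially standard (non-)stationary phase estimates.
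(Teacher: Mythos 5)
Your reformulation of $I(\lambda,\phi,e)$ via Plancherel and your identification of $MA$ as the stabilizer of the geodesic are both correct, and your appeal to the preliminary lemmas of \S\ref{sec:der of A} is well placed. However, both parts of your argument have genuine gaps.

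\textbf{Part (1).} Your plan is to bound $\int_{M\backslash K_0}|\widehat\Phi(s,\overline k)|^2\,d\overline k$ for $s$ near $\pm\lambda$ by splitting $\overline k$ into a region near the identity coset (handled by the Fourier support of $\sF_t\phi$) and a complementary region (handled by integration by parts in $(z,\tau)$). The second half of this cannot work as stated: the only hypotheses on $\phi$ are $\|\phi\|_2=1$ and a support condition on $\sF_t\phi$; nothing controls the $z$- or $\tau$-derivatives of $\phi$, which is precisely what integration by parts in $(z,\tau)$ would require. Note also that if both halves of your argument delivered rapid decay, you would end up with $I(\lambda,\phi,e)\ll\lambda^{-A}$, far better than the stated $\beta^{-1/2+\epsilon}$; the fact that the theorem only claims $\beta^{-1/2+\epsilon}$ is a signal that a purely oscillatory argument is unavailable here. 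The paper instead decomposes $k_\lambda=k_1+k_2$ by radius (at scale $\beta^{-1/2+\epsilon_0}$). For $k_1$ it makes no use of the Fourier support of $\phi$ at all: by Plancherel, $I_1\ll\|\widehat{k_1}\|_\infty\|\Phi\|_2^2$, and the pointwise bound $\widehat{k_1}(s)\ll\beta^{-1/2+\epsilon_0}$ comes from the smallness of the support of $k_1$ together with kernel and spherical-function asymptotics---this is exactly where $\beta^{-1/2}$ enters. For $k_2$ it uses the spherical function asymptotics, expands one factor of $\phi$ by Fourier inversion in $t$ only, and performs non-stationary phase in the single variable $t_2$; no derivatives of $\phi$ in $z$ or $\tau$ are ever taken. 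The fact that $t_2\gg\beta^{-1/2+\epsilon_0}$ (forced by the support of $b_2$) makes $\partial_t\cA\approx\pm1$, so the Fourier support condition gives $|s\pm s_2|\gg\beta$ and non-stationary phase applies.

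\textbf{Part (2).} The claim that $d(g,MA)\geq\lambda^{-1/2+\epsilon_0}\beta^{1/2}$ ``translates into a quantitative lower bound on the $\overline k$-derivative of the phase'' is not correct. By Lemma \ref{lem: der wrt K0}, the $\overline k$-gradient of $A(\overline k\,n(z,\tau)a(t))$ is comparable to $(|z|^2+\tau^2)^{1/2}$; that is, it is governed by the $N$-component of the Iwasawa decomposition of the argument, not by its distance to $MA$. Writing $g=k'n(z',\tau')a(t')$, the distance $d(g,MA)$ can be large either because $|z'|^2+|\tau'|^2$ is large (in which case your $\overline k$-argument works) or because $d(k',M)$ is large (in which case the $\overline k$-gradient can be arbitrarily small and your argument says nothing). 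The paper handles this dichotomy explicitly: Proposition \ref{prop: first one dimensional intgeral estimate} uses the uniformization Lemma \ref{lem: uniformazation lemma} for $\partial_t A$ to run non-stationary phase in $t_1$ or $t_2$ when the relevant $K_0$-component is far from $M$; Proposition \ref{prop: second one dimensional integral estimate} uses Lemma \ref{lem: der wrt K0} to do integration by parts in $\overline k$ when the $K_0$-component is close to $M$ but $|z'|^2+|\tau'|^2$ is large; and Proposition \ref{prop: nonstationary oscillatory integral estimate} assembles the two cases, verifying that the hypothesis $d(g,MA)\geq\lambda^{-1/2+\epsilon_0}\beta^{1/2}$ forces at least one of the two alternatives. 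Your sketch covers only the second alternative, so it is incomplete as a proof.
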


We shall prove the bound \eqref{eq: bd for I(g)}  in \S \ref{sec: bd near the spectrum}, and \eqref{eq: bd for I(e)} in \S \ref{sec: bd away from spec}. Using these with the amplification method, we may prove the following two bounds for $\langle \psi,b\phi \rangle_{g_0\sT}$. We can therefore prove Theorem \ref{thm:main theorem} by combining the following two lemmas with $\beta=\lambda^{1/7}$.

\begin{lemma}\label{lem: bound 1-Pi}
    If $g_0\in\Omega$, $\lambda^{\epsilon'}\leq\beta\leq\lambda^{1-\epsilon'}$ and $\phi \in \cS(\overline{\BD}\times\BR) $  satisfies $\|\phi \|_2 = 1$ and $\supp(\sF_t\phi) \subset \overline{\BD}\times (\BR\backslash \pm[\lambda-\beta, \lambda+\beta])$,  we have $\langle \psi,b\phi \rangle_{g_0\sT} \ll_\epsilon \beta^{-1/4+\epsilon}$.
\end{lemma}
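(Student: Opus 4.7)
The plan is to invoke the amplification inequality (Proposition \ref{prop: amp ineq}) with the trivial choice $\cT = \mathbf{1}_{K^S}$, the identity element of $\cH^S$. Since $\psi$ is $K^S$-invariant one has $\cT\psi = \psi$ and $\cT * \cT^* = \mathbf{1}_{K^S}$, so no genuine amplification is needed in this regime; the gain comes entirely from the off-spectrum hypothesis on $\phi$ via Proposition \ref{prop: bound for I}(1).

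With this choice the amplification inequality reduces to
\begin{align*}
    |\langle \psi, b\phi\rangle_{g_0\sT}|^2 \ll \sum_{\gamma} |I(\lambda, \phi, g_0^{-1}\gamma g_0)|,
\end{align*}
where the sum ranges over $\gamma \in \bZ(F)\backslash\bG(F)$ whose image in $\bG(\BA^S)$ lies in $K^S$. By the compact-support assumption on $k_\lambda$ (so that $I(\lambda,\phi,g)=0$ unless $d(g,e)\leq 1$), only those $\gamma$ with $d(g_0^{-1}\gamma g_0, e) \leq 1$ contribute; and for $g_0$ in the compact set $\Omega$, a standard injectivity-radius count in the compact arithmetic quotient $X$ bounds the number of such $\gamma$ by a constant depending only on $X$ and $\Omega$.

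I would then insert Proposition \ref{prop: bound for I}(1) into each of the $O_\Omega(1)$ remaining terms to obtain $|I(\lambda,\phi,g_0^{-1}\gamma g_0)| \ll_\epsilon \beta^{-1/2+\epsilon}$; summing the finitely many contributions and taking a square root yields the claimed bound $|\langle\psi,b\phi\rangle_{g_0\sT}| \ll_\epsilon \beta^{-1/4+\epsilon}$.

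The real work is therefore Proposition \ref{prop: bound for I}(1) itself (deferred to \S\ref{sec: bd away from spec}), which must exploit the separation between the Fourier support of $\phi$ and the spectral window $\pm[\lambda-\beta,\lambda+\beta]$ of $k_\lambda$ to produce the $\beta^{-1/2+\epsilon}$ decay. The one subtle point for the present lemma is that Proposition \ref{prop: bound for I}(1) is stated only at $g=e$, whereas the amplification produces shifted arguments $g_0^{-1}\gamma g_0$ in a bounded neighborhood of the identity; I expect this to be a harmless uniform extension of the same oscillatory-integral argument, since the spectral mismatch is a property of the multiplier $h_\lambda$ rather than of any particular group element.
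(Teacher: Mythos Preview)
Your overall strategy is right, but there is a genuine gap exactly where you flag the ``subtle point.'' Proposition~\ref{prop: bound for I}(1) is proved only at $g=e$, and its proof in \S\ref{sec: bd away from spec} does \emph{not} extend harmlessly to $g$ in a unit neighborhood of the identity. The $I_1$ piece is fine (the Plancherel/Cauchy--Schwarz argument only sees $\|\widehat{k_1}\|_\infty$ and the $L^2$-norm of a translate of $\Phi$), but the $I_2$ piece is not: after the reduction \eqref{eq: defn for z and tau} one has $(n(z_1,\tau_1)a(t_1))^{-1}n(z_2,\tau_2)a(t_2)=n(z,\tau)a(t_2-t_1)$ with $|z|,|\tau|\ll\lambda^{-1/2}$, and Lemma~\ref{KN lower bound for t from lower bound for A(z,t)} together with the derivative estimates \eqref{KN approximate for derivative of mathcal A}--\eqref{KN approximate for higher derivative of mathcal A} rely crucially on this smallness to force $|t|\gg\beta^{-1/2+\epsilon_0}$ and to control the phase. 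Inserting a fixed $g$ with $d(g,e)\le 1$ destroys the reduction to $n(z,\tau)a(t)$ with $|z|,|\tau|\ll\lambda^{-1/2}$, so the stationary-phase analysis would have to be redone.

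The paper sidesteps this entirely by a different (and simpler) choice of $\cT$: instead of $\mathbf{1}_{K^S}$, take $\cT$ to be the characteristic function of a \emph{sufficiently small} open compact subgroup of $\bG(\BA_f)/\bZ(\BA_f)$. Then $\cT*\cT^*$ is supported on that small subgroup, and discreteness of $\bG(F)$ in $\bG(\BA)$ forces the only surviving $\gamma$ in \eqref{eq: amp ineq} (subject also to $d(g_0^{-1}\gamma g_0,e)\le 1$) to be the identity. One is left with $|\langle\psi,b\phi\rangle_{g_0\sT}|^2\ll |I(\lambda,\phi,e)|$, and Proposition~\ref{prop: bound for I}(1) applies as stated. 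This is a one-line fix to your argument; the rest of your outline is correct.
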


\begin{proof}
    In Proposition \ref{prop: amp ineq}, we can choose $\cT \in \cH_f$  to be the characteristic function of a sufficiently small open subgroup of $\bG(\BA_f)/\bZ(\BA_f)$, then only the identity element $\gamma=e$ will make a nonzero contribution to the sum in \eqref{eq: amp ineq}. This gives
    \begin{align*}
        \left| \langle \psi,b\phi\rangle_{g_o\sT}\right|^2\ll\left| I(\lambda,\phi,e) \right|.
    \end{align*}
    The result follows from Proposition \ref{prop: bound for I} (1).
\end{proof}

\begin{lemma}\label{lem: bound Pi}
    If $g_0\in\Omega$, $\lambda^{\epsilon'}\leq\beta\leq\lambda^{1-\epsilon'}$ and $\phi \in \cS(\overline{\BD}\times\BR) $  satisfies $\|\phi \|_2 = 1$ and $\supp(\sF_t\phi) \subset \overline{\BD}\times (\pm[\lambda-\beta,\lambda+\beta])$, we have $\langle \psi,b\phi \rangle \ll_\epsilon \lambda^{-1/24+\epsilon}\beta^{1/24}$.
\end{lemma}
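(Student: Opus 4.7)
The plan is to apply the amplification inequality (Proposition~\ref{prop: amp ineq}) with a nontrivial Hecke operator built from the amplifiers $T_v$ of \eqref{eq: construction of amplifier}. Fix a parameter $L\ge 1$ to be chosen, let $P(L)\subset\sP_0$ be the set of split primes of $F$ with $L\le q_v\le 2L$, and set $N:=|P(L)|\asymp L/\log L$ by the prime number theorem. Define $\cT:=\sum_{v\in P(L)} T_v$. Since $T_v\psi=\psi$ for every $v\in\sP$, we have $\cT\psi=N\psi$, so Proposition~\ref{prop: amp ineq} gives
\[
N^2\,\bigl|\langle\psi,b\phi\rangle_{g_0\sT}\bigr|^2 \ll \sum_{\gamma\in\bZ(F)\backslash\bG(F)} \bigl|(\cT\cT^*)(\gamma)\bigr|\,\bigl|I(\lambda,\phi,g_0^{-1}\gamma g_0)\bigr|.
\]

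I would then split the geometric sum by the distance of $g_0^{-1}\gamma g_0$ from $MA$, using the threshold $\kappa:=\lambda^{-1/2+\epsilon_0}\beta^{1/2}$ from Proposition~\ref{prop: bound for I}(2). For $\gamma$ with $d(g_0^{-1}\gamma g_0,MA)\ge\kappa$ (and $d(g_0^{-1}\gamma g_0,e)\le 1$), part~(2) gives $|I|\ll_A\lambda^{-A}$; combined with polynomially bounded amplifier coefficients and a polynomial count of such $\gamma$, this contribution is negligible. For $\gamma$ close to $MA$ I use the trivial bound $|I(\lambda,\phi,g)|\ll 1$ (viewing $I$ as an inner product against the spectral multiplier of norm $\|h_\lambda\|_\infty\le 1$, applied to $b\phi$ with $\|b\phi\|_2\le 1$) together with Proposition~\ref{prop: Hecke return}, which gives $\ll_\epsilon n^\epsilon$ lattice points in each double coset $K(\fn_1,\fn_2,\fn_3)$ of norm $n=\mathrm{Nm}(\fn_1\fn_2\fn_3)$, provided $\kappa n\le C$.

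Expanding $\cT\cT^*=\sum_{v_1,v_2\in P(L)} T_{v_1}T_{v_2}^*$ and using Lemma~\ref{lem: bound amplifier}, the identity $\gamma=e$ contributes $(\cT\cT^*)(e)\,|I(e)|\ll N$ (each $(T_v T_v^*)(e)=O(1)$ by the relation in \S\ref{sec:amplification}). The nontrivial diagonal terms ($v_1=v_2=v$) carry coefficients $\ll q_v^{-2}$ on cosets with $n=O(q_v^2)$; summed over $v\in P(L)$ this yields $\ll L^{-1+\epsilon}$. The off-diagonal terms ($v_1\ne v_2$) carry coefficients $\ll(q_{v_1}q_{v_2})^{-1}\asymp L^{-2}$ on cosets with $n\ll L^2$, so each pair contributes $\ll L^{-2+O(\epsilon)}$, and summing over $\sim L^2$ pairs gives $\ll L^{O(\epsilon)}$. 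Therefore $|\langle\psi,b\phi\rangle|^2\ll \lambda^\epsilon/L$.

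Choosing $L=\lambda^{1/12}\beta^{-1/12}$ gives the claimed bound $|\langle\psi,b\phi\rangle|\ll\lambda^{-1/24+\epsilon}\beta^{1/24}$. This choice is admissible for Proposition~\ref{prop: Hecke return} since $\kappa L^2=\lambda^{-1/3+\epsilon_0}\beta^{1/3}\ll\lambda^{\epsilon_0-\epsilon'/3}=o(1)$ once $\epsilon_0<\epsilon'/3$ and $\beta\le\lambda^{1-\epsilon'}$. The main obstacle is the bookkeeping: one must handle all three cases of Lemma~\ref{lem: bound amplifier} (including the $\Phi_v(2,1,0)$ branch of the amplifier) and verify that the $\sim L^2$ off-diagonal ideal triples, though numerous, have coefficients small enough so that they do not overwhelm the identity contribution of size $N\sim L$.
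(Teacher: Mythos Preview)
Your overall strategy matches the paper's exactly: amplify with $\cT=\sum_v T_v$, discard the terms with $d(g_0^{-1}\gamma g_0,MA)\ge\kappa=\lambda^{-1/2+\epsilon_0}\beta^{1/2}$ via Proposition~\ref{prop: bound for I}(2), and control the remaining ones by Proposition~\ref{prop: Hecke return} together with the trivial bound $|I|\ll 1$. The dyadic range $L\le q_v\le 2L$ versus the paper's $q_v\le N$ is cosmetic.

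The gap is in your bookkeeping of the ideal norms $n=\mathrm{Nm}(\fn_1\fn_2\fn_3)$. You assert $n=O(q_v^2)$ for the diagonal terms and $n\ll L^2$ for the off-diagonal ones, but this is too small. Even in the simplest off-diagonal case $T_{v_1}T_{v_2}^*$ with both factors of $\Phi(1,0,0)$ type, after centering one obtains $\Phi(v_1v_2,v_1,1)$, for which $n=q_{v_1}^2q_{v_2}\asymp L^3$. When the $\Phi(2,1,0)$ branch of the amplifier is active (which it must be at primes where $|a(\psi,v)|<1/2$), the off-diagonal produces $\Phi(v_1^2v_2^2,v_1v_2,1)$ with $n\asymp L^6$, and the diagonal $T_vT_v^*$ produces cosets $\Phi(v^a,v^b,1)$ with $a+b$ as large as $6$ (coming from $(a_1,a_2,a_3)=(2,0,-2)$ in Lemma~\ref{lem: bound amplifier}), so again $n\asymp L^6$. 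Consequently the admissibility condition for Proposition~\ref{prop: Hecke return} is $\kappa L^6\le C$, not $\kappa L^2\le C$. With your choice $L=\lambda^{1/12}\beta^{-1/12}$ one gets $\kappa L^6=\lambda^{\epsilon_0}$, which is \emph{not} bounded, so Proposition~\ref{prop: Hecke return} does not apply as stated. (Had your $n\ll L^2$ claim been correct, you could have taken $L$ as large as $\lambda^{1/4}\beta^{-1/4}$ and obtained the stronger---and false---bound $\lambda^{-1/8}\beta^{1/8}$; the fact that you did not optimize to this should have been a warning sign.)

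The fix is simply to take $L=(C\lambda^{1/2-\epsilon_0}\beta^{-1/2})^{1/6}$, exactly as the paper does; then $\kappa L^6\le C$ holds, and $L^{-1/2}\asymp \lambda^{-1/24+\epsilon_0/12}\beta^{1/24}$, giving the stated bound after absorbing $\epsilon_0$ into $\epsilon$.
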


\begin{proof}
    The proof here is similar to the proof of \cite[Proposition 4.1]{marshall2015restrictions}. Let $1\leq N\leq \lambda$ be an integer to be chosen later, and define $\cT$ to be the Hecke operator
    \begin{align*}
        \cT=\sum_{v\in\sP_N} T_v
    \end{align*}
    where $\sP_N=\{v\in\sP_0\mid q_v\leq N\}$ and $T_v$ is as in \eqref{eq: construction of amplifier}.  We have
    \begin{align*}
        \cT\cT^*=\sum_{v_1,v_2\in\sP_N, v_1\neq v_2} T_{v_1}T_{v_2}^*+\sum_{v\in\sP_N} T_{v}T_{v}^*.
    \end{align*}
    We may bound
    \begin{align*}
        \sum_{\gamma\in \mathbf{Z}(F)\backslash\mathbf{G}(F)} \sum_{v_1,v_2\in\sP_N, v_1\neq v_2}\left| T_{v_1}T^*_{v_2}(\gamma)I(\lambda,\phi,g_0^{-1}\gamma g_0) \right|.
    \end{align*}
    by the sum of four expressions, the first of which is
    \begin{align}\label{eq: first expression}
        \sum_{v_1,v_2\in\sP_N, v_1\neq v_2}\sum_{\gamma\in \mathbf{Z}(F)\backslash\mathbf{G}(F)} \frac{1}{q_{v_1}q_{v_2}}\Phi(v_1v_2,v_1,1)(\gamma)\left| I(\lambda,\phi,g_0^{-1}\gamma g_0) \right|.
    \end{align}
    If we fix $\epsilon_0>0$, the bound \eqref{eq: bd for I(g)} and our assumption that $N\leq\lambda$ allow us to restrict the sum over $\gamma$ to $\overline\cM(g,v_1v_2,v_1,1,\lambda^{-1/2+\epsilon_0}\beta^{1/2})$. Let $C>0$ be the constant as in Proposition \ref{prop: Hecke return}. If we assume that $N\leq (C\lambda^{1/2-\epsilon_0}\beta^{-1/2})^{1/6}$, we may combine Proposition \ref{prop: Hecke return}, and the trivial bound $I(\lambda,\phi,g_0^{-1}\gamma g_0)\ll1$ to obtain
    \begin{align*}
        \sum_{\gamma\in \mathbf{Z}(F)\backslash\mathbf{G}(F)} \Phi(v_1v_2,v_1,1)(\gamma)\left| I(\lambda,\phi,g_0^{-1}\gamma g_0) \right|\ll_\epsilon N^\epsilon.
    \end{align*}
    Therefore, \eqref{eq: first expression} is bounded by
    \begin{align}\label{eq: first expression bd}
        \ll_\epsilon N^\epsilon  \sum_{v_1,v_2\in\sP_N, v_1\neq v_2}\frac{1}{q_{v_1}q_{v_2}}\ll_\epsilon N^\epsilon.
    \end{align}

    The other three expressions we must consider are similar, but with $\Phi(v_1v_2,v_1,1)/q_{v_1}q_{v_2}$ replaced with $\Phi(v_1^2v_2,v_1,1)/q_{v_1}^2q_{v_2}$, $\Phi(v_1v_2^2,v_1v_2,1)/q_{v_1}q_{v_2}^2$ and $\Phi(v_1^2v_2^2,v_1v_2,1)/q^2_{v_1}q^2_{v_2}$, respectively. We may bound them in the same way using our assumption that $N\leq (C\lambda^{1/2-\epsilon_0}\beta^{-1/2})^{1/6}$.

    The analysis of the $T_vT_v^*$ terms is similar. Lemma \ref{lem: bound amplifier} implies that we must consider terms of the form $\Phi(v^{a},v^{b},1)/q_v^a$ with $a\geq b 
    \geq0$, and the inequalities \eqref{eq: ineq for ai} imply that $a+b\leq 6$. Arguing as above using the bound $N\leq (C\lambda^{1/2-\epsilon_0}\beta^{-1/2})^{1/6}$ gives 
    \begin{align*}
        \sum_{\gamma\in \mathbf{Z}(F)\backslash\mathbf{G}(F)} \left| T_vT_v^*(\gamma)I(\lambda,\phi,g_0^{-1}\gamma g_0) \right|\ll_\epsilon N^\epsilon.
    \end{align*}
    If we sum over $v\in\sP_N$ and combine this with \eqref{eq: first expression bd}, we obtain
    \begin{align*}
        \sum_{\gamma\in \mathbf{Z}(F)\backslash\mathbf{G}(F)} \left| \cT\cT^*(\gamma)I(\lambda,\phi,g_0^{-1}\gamma g_0) \right|\ll_\epsilon N^{1+\epsilon},
    \end{align*}
    and Proposition \ref{prop: amp ineq} gives
    \begin{align*}
        N^{1-\epsilon}\langle\psi,b\phi\rangle\ll_\epsilon \langle\cT\psi,b\phi\rangle\ll_\epsilon N^{1/2+\epsilon}
    \end{align*}
    Choosing $N= (C\lambda^{1/2-\epsilon_0}\beta^{-1/2})^{1/6}$ completes the proof.
\end{proof}

\section{Derivatives of the Iwasawa  projection}\label{sec:der of A}
In this section, we provide several preliminary calculations that are crucial to the estimates of oscillatory integrals in \S \ref{sec: bd near the spectrum}. For $g\in G_0$, let $\Phi_g: K_0\to K_0$ be the map sending $k$ to $\kappa(kg)$, i.e $kg\in NA\Phi_g(k)$. 
\begin{lemma}\label{lem: change of variable}
    $\Phi$ is a smooth group action of $G_0$ on $K_0$ from right. Moreover, $\Phi_g$ induces a diffeomorphism from $M\backslash K_0$ onto itself.
\end{lemma}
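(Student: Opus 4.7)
The plan is to verify the group-action axioms directly from the Iwasawa decomposition, then exploit the normalization properties of $M$ and $A$ to show that the action descends to the quotient $M\backslash K_0$.

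First I would check smoothness and the action axioms at the level of $K_0$. Smoothness of $\Phi_g$ is immediate from the smoothness of the Iwasawa decomposition map $G_0 \to N\times A\times K_0$, since $\Phi_g(k) = \kappa(kg)$ is the composition of right multiplication by $g$ with the smooth projection $\kappa$. For the action axioms, $\Phi_e(k)=k$ is immediate from $ke = k \in NA\cdot k$. For composition, write $kg_1 = n_1 a_1 \Phi_{g_1}(k)$ and then $\Phi_{g_1}(k)g_2 = n_2 a_2 \Phi_{g_2}(\Phi_{g_1}(k))$; multiplying yields
\[
kg_1g_2 = n_1 a_1 n_2 a_2\,\Phi_{g_2}(\Phi_{g_1}(k)),
\]
and since $NA$ is a subgroup of $G_0$ we have $n_1a_1n_2a_2\in NA$, so uniqueness of the Iwasawa decomposition forces $\Phi_{g_1g_2}(k)=\Phi_{g_2}(\Phi_{g_1}(k))$. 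This identifies $\Phi$ as a smooth right action and in particular shows that each $\Phi_g$ is a diffeomorphism of $K_0$ with inverse $\Phi_{g^{-1}}$.

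Next I would show that $\Phi_g$ intertwines the left $M$-action on $K_0$. The key structural input is that $M$ centralizes $A$ and normalizes $N$ (since $M\subset K_0$ preserves each root space $\fg_\alpha$ up to itself, or equivalently because $M=Z_{K_0}(\fa)$). For $m\in M$, write $kg = n(kg)\exp(A(kg)H)\kappa(kg)$. Then
\[
(mk)g = m\cdot n(kg)\exp(A(kg)H)\kappa(kg) = \bigl(m\, n(kg)\, m^{-1}\bigr)\exp(A(kg)H)\,m\kappa(kg),
\]
and since $m n(kg) m^{-1} \in N$, uniqueness of the Iwasawa decomposition gives $\Phi_g(mk)=m\Phi_g(k)$. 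Hence $\Phi_g$ descends to a well-defined smooth map $\overline{\Phi}_g: M\backslash K_0\to M\backslash K_0$, and since $\overline{\Phi}_{g^{-1}}$ provides a smooth inverse, $\overline{\Phi}_g$ is a diffeomorphism.

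I don't expect a serious obstacle; the only subtlety is verifying the normalization of $N$ by $M$, which follows because the restricted root spaces $\fg_{\epsilon/2}$ and $\fg_\epsilon$ described in \S\ref{sec: notation} are stable under $\Ad(M)$ (this is immediate from the fact that $M$ commutes with $\fa$ and therefore preserves the eigenspace decomposition of $\ad(H)$). Once that is in place, the two conclusions---smooth group action and descent to a diffeomorphism on $M\backslash K_0$---follow mechanically from the Iwasawa uniqueness.
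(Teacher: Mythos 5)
Your proposal is correct and follows essentially the same route as the paper: smoothness from the Iwasawa decomposition, the group-action axiom $\Phi_{g_1 g_2}=\Phi_{g_2}\circ\Phi_{g_1}$ via uniqueness of the decomposition, and descent to $M\backslash K_0$ because $M$ normalizes $NA$ (you spell this out as $M$ centralizing $A$ and normalizing $N$, which is the same fact). The only cosmetic difference is that the paper establishes diffeomorphicity of $\Phi_g$ by identifying $K_0\simeq NA\backslash G_0$ and viewing $\Phi_g$ as right multiplication there, whereas you exhibit $\Phi_{g^{-1}}$ as the inverse directly; these are equivalent.
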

\begin{proof}
    The smoothness of the Iwasawa decomposition implies that $\Phi_g$ is smooth and depends smoothly on $g$. We identify $K_0$ with the quotient $NA\backslash G_0$ via the Iwasawa decomposition. Then $\Phi_g$ is obtained by composing the diffeomorhpism $K_0\to NA\backslash G_0$, the right multiplication by $\cdot g:NA\backslash G_0\to NA\backslash G_0$ and the diffeomorphism $ NA\backslash G_0\to K_0$.
 It remains to show $\Phi_{gh} = \Phi_h\circ\Phi_g$ for $g,h\in G_0$. By definition
     \begin{align*}
         kgh\in NA\Phi_{gh}(k)\\
         kg\in NA\Phi_g(k),
     \end{align*}
     which implies
     \begin{align*}
         NA\Phi_g(k) h = NA\Phi_{gh}(k),\\
         \Phi_g(k) h \in NA\Phi_{gh}(k),
     \end{align*}
     that is $\Phi_h\left(  \Phi_g(k) \right) = \Phi_{gh}(k)$. The second statement follows from the fact that $M$ normalizes $NA$.
\end{proof}

\begin{lemma}\label{lem: spliting A}
    Let $y,z\in G_0$ and let $k\in K_0$. Then we have
    \begin{align*}
        A(ky^{-1}z) = A\left(\Phi_{y^{-1}}(k)z\right)- A\left(\Phi_{y^{-1}}(k)y\right).
    \end{align*}
\end{lemma}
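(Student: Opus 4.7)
The plan is to reduce the identity to a single cocycle relation for the $A$-component of the Iwasawa decomposition, and then apply it twice.

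First I would establish the additivity formula
\begin{equation*}
A(gh) \;=\; A(g) + A(\kappa(g)h) \qquad \text{for all } g,h\in G_0.
\end{equation*}
To see this, write $g = n(g)\,\exp(A(g)H)\,\kappa(g)$, so that $gh = n(g)\,\exp(A(g)H)\,\kappa(g)h$. Then decompose $\kappa(g)h = n(\kappa(g)h)\,\exp(A(\kappa(g)h)H)\,\kappa(\kappa(g)h)$. Since $A$ normalizes $N$, I can commute the middle $N$-factor past $\exp(A(g)H)$, collect the two $A$-factors into $\exp((A(g)+A(\kappa(g)h))H)$, and read off the $A$-coordinate by uniqueness of the Iwasawa decomposition.

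Next I apply this with $g = ky^{-1}$ and $h=z$ to get
\begin{equation*}
A(ky^{-1}z) \;=\; A(ky^{-1}) + A(\Phi_{y^{-1}}(k)\,z),
\end{equation*}
since $\kappa(ky^{-1}) = \Phi_{y^{-1}}(k)$ by definition. The remaining task is to show $A(ky^{-1}) = -A(\Phi_{y^{-1}}(k)\,y)$. I apply the same additivity formula to $ky^{-1}\cdot y = k$:
\begin{equation*}
A(k) \;=\; A(ky^{-1}) + A(\kappa(ky^{-1})\,y) \;=\; A(ky^{-1}) + A(\Phi_{y^{-1}}(k)\,y).
\end{equation*}
But $k\in K_0$, so its Iwasawa decomposition is $k = e\cdot e\cdot k$, giving $A(k)=0$. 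Hence $A(ky^{-1}) = -A(\Phi_{y^{-1}}(k)\,y)$. Substituting back into the previous display yields the claim.

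I do not expect any real obstacle here: the only subtlety is cleanly justifying the additivity formula via commuting $N$ past $A$ and uniqueness of Iwasawa, and then noticing that $k\in K_0$ has trivial $A$-component so that the two applications of the formula telescope into the asserted difference. The lemma is essentially the statement that $(g,k)\mapsto A(\kappa(k)g)$ (or rather its reformulation in terms of $\Phi$) satisfies the standard Harish-Chandra cocycle relation.
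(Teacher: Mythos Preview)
Your proof is correct and follows essentially the same approach as the paper. Both arguments rest on the cocycle relation $A(gh)=A(g)+A(\kappa(g)h)$ applied twice: once to split $ky^{-1}z$ into $A(ky^{-1})+A(\Phi_{y^{-1}}(k)z)$, and once (via $ky^{-1}\cdot y=k$) to rewrite $A(ky^{-1})$ as $-A(\Phi_{y^{-1}}(k)y)$; the paper simply does this inline without isolating the additivity formula as a separate step.
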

\begin{proof}
    Let $ky^{-1} = na\Phi_{y^{-1}}(k)$. We have
    \begin{align*}
        A(ky^{-1}z) &= A\left( na\Phi_{y^{-1}}(k)z\right)\\
        &=A(na) + A\left( \Phi_{y^{-1}}(k)z\right)\\
        &=A\left( \Phi_{y^{-1}}(k)z\right) - A\left((na)^{-1}k\right)\\
        &=A\left( \Phi_{y^{-1}}(k)z\right) - A\left(\Phi_{y^{-1}}(k)y\right).
    \end{align*}
\end{proof}

\subsection{Cayley transform}
It is simpler to describe the maximal compact subgroup of the unitary group of signature $(2,1)$ if the Hermitian form is diagonal. The Cayley transform is given by the matrix
\begin{align*}
    C=\frac{1}{\sqrt{2}}\begin{pmatrix}
        1&0&1\\0&\sqrt{2}&0\\1&0&-1
    \end{pmatrix}.
\end{align*}
Observe that $C^{-1}=C$ and 
\begin{align*}
    J_1:=CJC=\begin{pmatrix}
    1&&\\&1&\\&&-1
\end{pmatrix}.
\end{align*}
We denote the unitary Lie group with the Hermitian matrix $J_1$ by $G_1$:
$$G_1=\mathrm{U}(J_1)=\{g\in\GL(3,\BC)\mid g^*J_1g=J_1\}.$$ Let $K_1$ be the subgroup of $G_1$ consisting of elements satisfying $g^*g=I$, which is a maximal compact subgroup. It may be seen that 
\begin{align*}
    K_1=\left\{ \begin{pmatrix}
        g&\\&h
    \end{pmatrix}\mid g\in\mathrm{U}(2),h\in\mathrm{U}(1)  \right\}\simeq \mathrm{U}(2)\times\mathrm{U}(1).
\end{align*}
Via the Cayley transform, i.e. conjugating by $C$, we get the isomorphism $\cC:G_0\to G_1$, which sends $K_0$ onto $K_1$. Notice that elements in $M$ are invariant under the Cayley transform, and we note that $M$ is a normal subgroup in both $K_0$ and $K_1$.

    The composition of the homomorphism from the 3-sphere $S^3\simeq \mathrm{SU}(2)$ to $K_1$ defined by
    \begin{align}\label{eq: rep of K1/M}
        \begin{pmatrix}
            \alpha&\beta\\-\overline{\beta}&\overline{\alpha}
        \end{pmatrix}\mapsto \begin{pmatrix}
            \alpha&\beta&0\\-\overline{\beta}&\overline{\alpha}&0\\0&0&1
        \end{pmatrix},\qquad\left(\alpha,\beta\in\BC,\,|\alpha|^2+|\beta|^2=1 \right),
    \end{align}
    with the natural quotient map $K_1\to M\backslash K_1$ is a diffeomorphism from $S^3$ onto $M\backslash K_1$.
Composing the map \eqref{eq: rep of K1/M} with the Cayley transform, we obtain the following injective homomorphism from $\mathrm{SU}(2)$ to $K_0$: 
\begin{align}\label{eq: isom from SU(2) to K0/M}
    \begin{pmatrix}
            \alpha&\beta\\-\overline{\beta}&\overline{\alpha}
        \end{pmatrix}\mapsto k(\alpha,\beta):=\begin{pmatrix}
            \frac{\alpha+1}{2}&\frac{\beta}{\sqrt{2}}&\frac{\alpha-1}{2}\\-\frac{\overline{\beta}}{\sqrt{2}}&\overline{\alpha}&-\frac{\overline{\beta}}{\sqrt{2}}\\\frac{\alpha-1}{2}&\frac{\beta}{\sqrt{2}}&\frac{\alpha+1}{2}
        \end{pmatrix},\qquad\left(\alpha,\beta\in\BC,\,|\alpha|^2+|\beta|^2=1 \right),
\end{align}
which induces a diffeomorphism from $\mathrm{SU}(2)$ onto $M\backslash K_0$. 
For doing explicit calculations, we choose the following basis for $\mathfrak{su}(2)$:
\begin{align}\label{eq: basis for su(2)}
    X_1 = \begin{pmatrix}0&i\\i&0\end{pmatrix}, \quad X_2 = \begin{pmatrix}0&-1\\1&0\end{pmatrix}, \quad X_3 = \begin{pmatrix}i&0\\0&-i\end{pmatrix}.
\end{align}
The corresponding vectors in $\fk$, via the Lie algebra map induced by \eqref{eq: isom from SU(2) to K0/M}, are
\begin{align}\label{eq: basis for k/m}
    \tilde{X_1} = \frac{1}{\sqrt{2}}\begin{pmatrix}0&i&0\\i&0&i\\0&i&0\end{pmatrix}, \quad \tilde{X_2} =\frac{1}{\sqrt{2}} \begin{pmatrix}0&-1&0\\1&0&1\\0&-1&0\end{pmatrix}, \quad \tilde{X_3} = \frac{1}{2}\begin{pmatrix}i&0&i\\0&-2i&0\\i&0&i\end{pmatrix}.
\end{align}
Given $(r_1,r_2,r_3)\in\BR^3$, if we let $r=(r_1^2+r_2^2+r_3^2)^{1/2}$, then it may be seen that
\begin{align}\label{eq: exp(ri)}
    \exp(r_1X_1+r_2X_2+r_3X_3)=\begin{pmatrix}
        \cos(r)+ir^{-1}\sin(r)r_3& r^{-1}\sin(r)(ir_1-r_2)\\
        r^{-1}\sin(r)(ir_1+r_2)&\cos(r)-ir^{-1}\sin(r)r_3
    \end{pmatrix}.
\end{align}
Let $U_{\fk/\fm}$ be the "unit ball" centered at 0 in the tangent space of $B$ defined by
\begin{align}\label{eq: defn of Uk/m}
    U_{\fk/\fm}:=\left\{r_1\tilde{X_1}+r_2\tilde{X_2}+r_3\tilde{X_3}\in\fk\mid r_1^2+r_2^2+r_3^2=1  \right\}.
\end{align}
Hence, from \eqref{eq: isom from SU(2) to K0/M} and \eqref{eq: exp(ri)}, it may be seen that 
\begin{align}\label{eq: exp ri tilde}
    \exp(r_1\tilde{X_1}+r_2\tilde{X_2}+r_3\tilde{X_3})=k(\cos(r)+ir^{-1}\sin(r)r_3, r^{-1}\sin(r)(ir_1-r_2)),
\end{align}
where $r=(r_1^2+r_2^2+r_3^2)^{1/2}$. 


\subsection{Explicit formula of $A(kna)$}
In this section, we return to the Siegel domain model and calculate the Iwasawa $A$-projection using some explicit parametrizations. 

Given $z\in\BC$, $\tau,t\in\BR$ and $\alpha,\beta\in\BC$ satisfying $|\alpha|^2+|\beta|^2=1$, we have the parametrizations $a(t)$, $n(z,\tau)$ and $k(\alpha,\beta)$ for $A$, $N$ and $K_0/M$ by \eqref{eq: para for A}, \eqref{eq: para for N} and \eqref{eq: isom from SU(2) to K0/M} respectively. From \eqref{eq: Iwa coord action}, the standard lift of $n(z,\tau)a(t)\cdot o\in\BH^2_\BC$ to $\BC^3$ is $(z_1,z_2,1)^t$ where $z_1=-e^t-{|z|^2}+i\tau$ and $z_2=-\sqrt{2}\overline{z}$. Therefore,
\begin{align}\label{eq: action of k}
   \begin{pmatrix}
            \frac{\alpha+1}{2}&\frac{\beta}{\sqrt{2}}&\frac{\alpha-1}{2}\\-\frac{\overline{\beta}}{\sqrt{2}}&\overline{\alpha}&-\frac{\overline{\beta}}{\sqrt{2}}\\\frac{\alpha-1}{2}&\frac{\beta}{\sqrt{2}}&\frac{\alpha+1}{2}
        \end{pmatrix}\cdot \begin{pmatrix}
        z_1\\z_2\\1
    \end{pmatrix}=\begin{pmatrix}
        \frac{\alpha+1}{2}z_1+\frac{\beta}{\sqrt{2}}z_2+\frac{\alpha-1}{2}\\
        -\frac{\overline{\beta}}{\sqrt{2}}z_1+\overline{\alpha}z_2-\frac{\overline{\beta}}{\sqrt{2}}\\
        \frac{\alpha-1}{2}z_1+\frac{\beta}{\sqrt{2}}z_2+\frac{\alpha+1}{2}
    \end{pmatrix}
\end{align}
and \eqref{eq: Iwa coord} provide that $\exp(A(k(\alpha,\beta)n(z,\tau)a(t)))$ is equal to
\begin{align*}
    &-\mathrm{Re}\left(\frac{\frac{\alpha+1}{2}z_1+\frac{\beta}{\sqrt{2}}z_2+\frac{\alpha-1}{2}}{\frac{\alpha-1}{2}z_1+\frac{\beta}{\sqrt{2}}z_2+\frac{\alpha+1}{2}} \right)-\frac{1}{2}\left| \frac{-\frac{\overline{\beta}}{\sqrt{2}}z_1+\overline{\alpha}z_2-\frac{\overline{\beta}}{\sqrt{2}}}{\frac{\alpha-1}{2}z_1+\frac{\beta}{\sqrt{2}}z_2+\frac{\alpha+1}{2}}  \right|^2\\
    =&-\frac{1}{2}\left| \frac{\alpha-1}{2}z_1+\frac{\beta}{\sqrt{2}}z_2+\frac{\alpha+1}{2} \right|^{-2}\left(  z_1+\overline{z_1}+|z_2|^2\right)\\
    =&\left| \frac{\alpha-1}{2}(-e^t-|z|^2+i\tau)-\beta \overline{z}+\frac{\alpha+1}{2} \right|^{-2} e^t
\end{align*}
In summary, we get:
\begin{lemma}\label{lem: explicit formula for A}
    Given $g\in G_0$ with the Iwasawa decomposition $g=na(t_0)mk(\alpha,\beta)$ where $n\in N$, $a(t_0)\in A$, $m\in M$ and $k(\alpha,\beta)$ is given by \eqref{eq: isom from SU(2) to K0/M}, we have, for $z\in\BC$ and $\tau,t\in\BR$,
    \begin{align}\label{eq: explicit formula for iwasawa A proj}
        A(gn(z,\tau)a(t)) = t_0 + t - \log\left(\left| \frac{\alpha-1}{2}(-e^t-|z|^2+i\tau)-\beta \overline{z}+\frac{\alpha+1}{2} \right|^{2}\right).
    \end{align}
\end{lemma}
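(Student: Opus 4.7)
The plan is to split the argument in two: a short reduction that peels off the $NAM$-part of $g$, followed by an explicit calculation of $A(k(\alpha,\beta)n(z,\tau)a(t))$ in the Siegel domain model. Writing $g = n_0 a(t_0) m k(\alpha,\beta)$ and using only the definition of the Iwasawa $A$-projection, I first observe that $A(gh) = t_0 + A(m k(\alpha,\beta) h)$ for any $h \in G_0$, because $A$ normalizes $N$ and so $n_0 a(t_0)\cdot(n'a'k') \in N\,(a(t_0)a')\,K_0$. Since $M$ is the centralizer of $A$ in $K_0$, it normalizes $N$; so if $k(\alpha,\beta) h = n' a' k'$ is an Iwasawa decomposition, then $m k(\alpha,\beta) h = (m n' m^{-1}) a' (m k')$ is as well, and hence $A(m k(\alpha,\beta) h) = A(k(\alpha,\beta) h)$. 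Taking $h = n(z,\tau) a(t)$ reduces the lemma to computing $A(k(\alpha,\beta) n(z,\tau) a(t))$.

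For this I would use the following principle in the Siegel model: $K_0$ stabilizes $o = (-1,0)$ and $a(t')\cdot o = (-e^{t'},0)$, so the coordinate formula \eqref{eq: Iwa coord} gives
\begin{align*}
    \exp(A(h')) \;=\; -\mathrm{Re}(w_1) - \tfrac{1}{2}|w_2|^2 \qquad\text{whenever } h'\cdot o = (w_1,w_2).
\end{align*}
I then lift to $\BC^3$: by \eqref{eq: Iwa coord action} the standard lift of $n(z,\tau) a(t)\cdot o$ is $\bz = (-e^t - |z|^2 + i\tau,\,-\sqrt{2}\,\overline{z},\,1)^t$, and \eqref{eq: action of k} identifies the components $(u_1,u_2,u_3)^t := k(\alpha,\beta)\bz$. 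Projecting gives $k(\alpha,\beta) n(z,\tau) a(t)\cdot o = (u_1/u_3, u_2/u_3)$, so
\begin{align*}
    \exp\!\left( A(k(\alpha,\beta) n(z,\tau) a(t)) \right) \;=\; -\mathrm{Re}(u_1/u_3) - \tfrac{1}{2}|u_2/u_3|^2 \;=\; -\frac{\langle (u_1,u_2,u_3),(u_1,u_2,u_3)\rangle}{2|u_3|^2}.
\end{align*}

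The decisive simplification is that $k(\alpha,\beta) \in G_0$ preserves $\langle\,,\,\rangle$, so the numerator equals $-\langle \bz,\bz\rangle/2$, and a direct evaluation of the Hermitian form on $\bz$ yields $-\langle \bz,\bz\rangle/2 = e^t$ (the $|z|^2$-contributions from $2\mathrm{Re}(z_1)$ and $|z_2|^2$ cancelling). Combining this with the expression for $u_3 = \tfrac{\alpha-1}{2}(-e^t - |z|^2 + i\tau) - \beta\overline{z} + \tfrac{\alpha+1}{2}$ read off from \eqref{eq: action of k} and taking logarithms produces \eqref{eq: explicit formula for iwasawa A proj}. I do not expect a serious obstacle: the only conceptual point is that invoking the $G_0$-invariance of the Hermitian form replaces an otherwise messy direct expansion of $\mathrm{Re}(u_1\overline{u_3})$ and $|u_2|^2$ by a one-line computation.
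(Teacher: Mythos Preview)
Your proposal is correct and follows essentially the same route as the paper: both lift $n(z,\tau)a(t)\cdot o$ to $\BC^3$, apply $k(\alpha,\beta)$ via \eqref{eq: action of k}, and then use \eqref{eq: Iwa coord} to read off $\exp(A(\cdot))$ as $-\mathrm{Re}(u_1/u_3)-\tfrac{1}{2}|u_2/u_3|^2$. The paper writes the key simplification directly as $-\tfrac{1}{2}|u_3|^{-2}(z_1+\overline{z_1}+|z_2|^2)$, which is exactly your observation that the numerator equals $-\langle\bz,\bz\rangle/2=e^t$ by $G_0$-invariance of the Hermitian form; you have simply made this step conceptually explicit, and you have also spelled out the $NAM$-reduction that the paper leaves implicit.
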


\subsection{Derivative with respect to \texorpdfstring{$NA$}{NA}}
Fix $g=na(t_0)mk(\alpha,\beta)\in G_0$  where $n\in N$, $a(t_0)\in A$, $m\in M$ and $k(\alpha,\beta)$ is given by \eqref{eq: isom from SU(2) to K0/M}.
Taking the partial derivative of \eqref{eq: explicit formula for iwasawa A proj} with respect to $t$, we obtain
\begin{align}\label{eq: explicit formula for par A/par t}
    \left.\frac{\partial}{\partial t}A(ga(t)) \right|_{t=0} = \mathrm{Re}(\alpha).
\end{align}
If we write $z=x+iy$ with $x,y\in\BR$, then the partial derivatives with respect to $x,y$ and $\tau$ are
\begin{align}
    \left.\frac{\partial}{\partial x}A(gn(x,0)) \right|_{x=0} = 2\mathrm{Re}(\beta),\label{eq: explicit formula for par A/par x}\\
    \left.\frac{\partial}{\partial y}A(gn(iy,0)) \right|_{y=0} = 2\mathrm{Im}(\beta),\label{eq: explicit formula for par A/par y}\\
    \left.\frac{\partial}{\partial \tau}A(gn(0,\tau)) \right|_{\tau=0} = \mathrm{Im}(\alpha).\label{eq: explicit formula for par A/par tau}
\end{align}

We shall also need the following uniformization lemma for the function $A$.

\begin{lemma}\label{lem: uniformazation lemma}
    Let $\cD\subset\BC\times\BR\times\BR$ be a bounded open set. There exists $\delta,\sigma>0$ depending on $\cD$, and a real analytic function
    \begin{align*}
        \xi:(-\delta,\delta)\times U_{\fk/\fm} \times \cD\to\BR
    \end{align*}
    such that
    \begin{align*}
        \frac{\partial}{\partial t} A(\exp(rX)n(z,\tau)a(t)) = 1-r^2\xi(r,X,z,\tau,t),
    \end{align*}
    and
    \begin{align}\label{eq: xi estimate}
    \begin{split}
        |\xi(r,X,z,\tau,t)| 
        &\geq \sigma,\\
        \left|\frac{\partial^n\xi}{\partial t^n} (r,X,z,\tau,t)\right|
        &\ll_{\cD,n}1,
        \end{split}
    \end{align}
    for $r\in(-\delta,\delta)$, $X\in U_{\fk/\fm}$ and $(z,\tau,t)\in\cD$. 
\end{lemma}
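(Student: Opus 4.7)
The plan is to apply Lemma \ref{lem: explicit formula for A} explicitly. Since $X \in U_{\fk/\fm}$, equation \eqref{eq: exp ri tilde} writes $\exp(rX) = k(\alpha(r,X), \beta(r,X))$ with
\begin{align*}
    \alpha(r,X) = \cos r + ir_3\sin r, \qquad \beta(r,X) = (ir_1 - r_2)\sin r,
\end{align*}
both real analytic in $r$. Taking $g = \exp(rX)$ (i.e.\ $n = e$, $t_0 = 0$, $m = e$) in Lemma \ref{lem: explicit formula for A}, I would obtain
\begin{align*}
    A(\exp(rX)n(z,\tau)a(t)) = t - \log|D(r,X,z,\tau,t)|^2,
\end{align*}
where $D := \tfrac{\alpha-1}{2}(-e^t-|z|^2+i\tau) - \beta\bar z + \tfrac{\alpha+1}{2}$ is real analytic in every variable and satisfies $D(0,X,\cdot)\equiv 1$. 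Differentiating in $t$ then yields
\begin{align*}
    \frac{\partial}{\partial t}A(\exp(rX)n(z,\tau)a(t)) = 1 + \mathrm{Re}\!\left(\frac{(\alpha-1)e^t}{D}\right).
\end{align*}

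Next I would show that, as a real-analytic function of $r$, the second term vanishes to second order at $r = 0$. It vanishes at $r = 0$ because $\alpha(0,X) = 1$, and the first $r$-derivative there equals $\mathrm{Re}(\alpha'(0,X)e^t) = \mathrm{Re}(ir_3 e^t) = 0$. Taylor's theorem therefore produces a unique real-analytic $\xi$ on $(-\delta_0,\delta_0)\times U_{\fk/\fm}\times \cD$ (with $\delta_0$ small enough that $D$ stays close to $1$ on $\overline{\cD}$) with $\mathrm{Re}((\alpha-1)e^t/D) = -r^2\xi$, which establishes the claimed identity. The bounds $|\partial_t^n \xi| \ll_{\cD,n} 1$ then follow automatically from real analyticity of $\xi$ and compactness of $\overline{\cD}\times U_{\fk/\fm}\times [-\delta,\delta]$.

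The main step is the uniform lower bound $|\xi|\geq\sigma$. A direct second-order Taylor expansion at $r = 0$ gives, writing $z = x + iy$,
\begin{align*}
    \xi(0,X,z,\tau,t) = \frac{e^t}{2}\!\left[1 + r_3^2(e^t + |z|^2 - 1) + 2r_3(r_1 x + r_2 y)\right].
\end{align*}
Setting $u = |r_3|$ and $v = \sqrt{1 - r_3^2}$ (so that $u^2 + v^2 = 1$), applying Cauchy--Schwarz to $r_1 x + r_2 y$, and using $r_1^2 + r_2^2 = v^2$, I would rewrite the bracket as a sum of squares:
\begin{align*}
    1 + r_3^2(e^t + |z|^2 - 1) + 2r_3(r_1 x + r_2 y) \;\geq\; (v - u|z|)^2 + u^2 e^t.
\end{align*}
This bound is strictly positive on $u^2 + v^2 = 1$, since simultaneous vanishing of both squares would force $u = 0$ and then $v = 0$. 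Continuity together with compactness of $U_{\fk/\fm}\times\overline{\cD}$ yields a uniform lower bound $\sigma_0 > 0$ on $\xi(0,X,\cdot)$, and continuity of $\xi$ in $r$ then lets me shrink $\delta \leq \delta_0$ so that $|\xi| \geq \sigma_0/2 =: \sigma$ throughout. The hard part is exactly this sign analysis: the raw expression is sign-indefinite when $e^t < 1$ and $|r_3|$ is close to $1$, and the completion-of-squares rearrangement $(v - u|z|)^2 + u^2 e^t$, exploiting the constraint $u^2 + v^2 = 1$, is the key algebraic move that makes positivity manifest.
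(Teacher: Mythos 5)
Your proof is correct, and it takes a genuinely different route from the paper's.

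The paper instead defines $\alpha,\beta$ via the $K_0$-component of the \emph{whole product} $\exp(rX)n(z,\tau)a(t)\in NAMk(\alpha,\beta)$, so that $1-\partial_t A = 1-\mathrm{Re}(\alpha) = 2\sin^2(r'/2)$ (using \eqref{eq: explicit formula for par A/par t}), where $k(\alpha,\beta)=\exp(r'X')$. Positivity of $\xi$ is then automatic by writing $\xi = 2\xi_0^2$ with $\sin(r'/2)=r\xi_0$, and nonvanishing of $\xi_0$ (equivalently, a nonzero lower bound on $\xi$) comes for free from the diffeomorphism statement of Lemma \ref{lem: change of variable}: since $\Phi_{n(z,\tau)a(t)}$ is a diffeomorphism of $M\backslash K_0$, the derivative $\partial r'/\partial r$ cannot vanish at $r=0$. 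In contrast, you keep $\alpha,\beta$ as functions of $(r,X)$ only, coming directly from $\exp(rX)$, plug into the explicit formula of Lemma \ref{lem: explicit formula for A}, compute the exact second-order jet $\xi(0,X,z,\tau,t)=\tfrac{e^t}{2}\bigl[1+r_3^2(e^t+|z|^2-1)+2r_3(r_1x+r_2y)\bigr]$ (which I have checked is correct), and then prove its positivity by hand via Cauchy--Schwarz plus the completion of squares $(v-u|z|)^2+u^2e^t$ on $u^2+v^2=1$. Your route is more computational but also more self-contained: it never invokes the group-action property of $\Phi_g$, and it exhibits the leading coefficient of $\xi$ explicitly rather than proving nonvanishing indirectly through a Jacobian. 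The tradeoff is that the positivity of $\xi$ is not manifest from your formula (it is sign-indefinite-looking when $e^t<1$), and the sum-of-squares rearrangement is the genuine content that replaces the paper's soft argument. Both arguments yield the same statement; yours arguably gives more explicit information (an explicit $\sigma$ in principle), while the paper's is shorter and generalizes more readily without explicit coordinate computations.
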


\begin{proof}
    Let $\delta_0>0$ be a small number so that the exponential map from the set
    \begin{align*}
        U_{\fk/\fm}(\delta_0) := \{ rX\mid r\in (-\delta_0,\delta_0), X\in U_{\fk/\fm}     \}
    \end{align*}
    onto its image in $M\backslash K_0$ is a diffeomorphism. We define the functions 
    \begin{align*}
        \alpha,\,\beta:(-\delta_0,\delta_0) \times U_{\fk/\fm} \times \cD \to \BC
    \end{align*}
    by requiring that
    \begin{align*}
        \exp(rX) n(z,\tau) a(t) \in NAMk(\alpha(r,X,z,\tau,t),\beta(r,X,z,\tau,t)),
    \end{align*}
   where $k(\alpha,\beta)$ is given by \eqref{eq: isom from SU(2) to K0/M}. We assume that $\delta_0$ is sufficiently small so that $k(\alpha,\beta)=\exp(r^\prime X^\prime)$ for some $0\leq r^\prime<\delta_0$ and $X^\prime\in U_{\fk/\fm}$.
   Applying \eqref{eq: exp ri tilde} and \eqref{eq: explicit formula for par A/par t} we obtain
   \begin{align*}
        1-\frac{\partial}{\partial t} A(\exp(rX) n(z,\tau) a(t)) = 1-\mathrm{Re}(\alpha)=1-\cos(r^\prime)=2\sin^2(r^\prime/2).
    \end{align*}
    By Lemma \ref{lem: change of variable}, if we write 
    \[
    \exp(rX)n(z,\tau)a(t)\in NAM\exp(r'X'),
    \]
    we can choose $0<\delta<\delta_0$ sufficiently small such that $\sin(r^\prime/2)$ vanishes for  $(r,X,z,\tau,t)\in(-\delta,\delta)\times U_{\fk/\fm}\times\cD$ if and only if $r=0$. Lemma \ref{lem: change of variable} implies that $\partial r^\prime / \partial r$ is analytic and never vanishes on $\{ 0\}\times U_{\fk/\fm}\times\cD$. It may be seen that there is a real analytic function $\xi_0$ on $(-2\delta,2\delta)\times U_{\fk/\fm} \times \cD$ such that $\sin(r^\prime/2)= r\xi_0(r,X,z,\tau,t)$. Defining $\xi = 2\xi_0^2$ gives the result. 
\end{proof}

\begin{lemma}\label{lem: uniform boundedness}
    Let $\delta > 0$ and $\cD\subset\BC\times\BR\times\BR$ be a bounded open set. There exists $\sigma>0$ depending on $\delta$ and $\cD$ such that
    \begin{align*}
        1-\frac{\partial}{\partial t} A(k n(z,\tau) a(t)) \geq \sigma,
    \end{align*}
    for arbitrary $k\in K_0$ with $d(k,M)\geq \delta$, and $(z,\tau,t)\in\cD$.
\end{lemma}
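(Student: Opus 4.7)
The plan is to reduce the statement to a continuity-plus-compactness argument on the compact set $\{k \in K_0 : d(k,M) \geq \delta\} \times \overline{\cD}$. The key observation is that $\frac{\partial}{\partial t} A(kn(z,\tau)a(t))$ can be rewritten in terms of the Iwasawa $K_0/M$-component of $kn(z,\tau)a(t)$. Specifically, if we write its Iwasawa decomposition in the form $n_0\,a(t_0)\,m_0\,k(\alpha',\beta')$ with $k(\alpha',\beta')$ as in \eqref{eq: isom from SU(2) to K0/M}, then applying \eqref{eq: explicit formula for par A/par t} to $g = kn(z,\tau)a(t)$ and using the group law $a(t)a(s)=a(t+s)$ yields
\begin{align*}
    \frac{\partial}{\partial t} A(kn(z,\tau)a(t)) = \mathrm{Re}(\alpha'(k,z,\tau,t)).
\end{align*}
The smoothness of the Iwasawa decomposition ensures that $\alpha'$ depends continuously (in fact, smoothly) on all of its arguments.

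Next, I would show that $\mathrm{Re}(\alpha') < 1$ strictly whenever $k \notin M$. Since $|\alpha'|^2 + |\beta'|^2 = 1$, equality $\mathrm{Re}(\alpha') = 1$ holds if and only if $(\alpha',\beta') = (1,0)$, i.e.\ $k(\alpha',\beta')=I_3$, i.e.\ $kn(z,\tau)a(t) \in NAM$. The latter forces $k \in M$ by the following manipulation. If $kn(z,\tau)a(t) = n_1 a_1 m_1$ with $n_1 \in N$, $a_1 \in A$, $m_1 \in M$, then using that $M$ commutes with $A$ and normalizes $N$, we may successively rewrite
\begin{align*}
    k = n_1 a_1 m_1 a(-t) n(-z,-\tau) = n_2 a_2 m_1
\end{align*}
for some $n_2 \in N$, $a_2 \in A$. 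Uniqueness of the Iwasawa decomposition $G_0 = NAK_0$, applied to $k \in K_0$, then forces $n_2 = e$, $a_2 = e$, and $k = m_1 \in M$.

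Combining these two facts, $1 - \mathrm{Re}(\alpha'(k,z,\tau,t))$ is continuous and strictly positive on $\{k \in K_0 : d(k,M) \geq \delta\} \times \overline{\cD}$, which is compact in $K_0 \times \BC\times\BR\times\BR$. Therefore it attains a positive minimum $\sigma > 0$ on this set, proving the lemma. I do not anticipate any real obstacle here: the whole argument is a soft continuity-plus-compactness reduction. The only small technical point is the algebraic identity $NAM \cap K_0 = M$, which follows immediately from Iwasawa uniqueness together with the facts that $M$ commutes with $A$ and normalizes $N$.
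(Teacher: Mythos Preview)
Your proof is correct and essentially follows the same initial reduction as the paper: both express $\frac{\partial}{\partial t} A(kn(z,\tau)a(t))$ as $\mathrm{Re}(\alpha')$ via \eqref{eq: explicit formula for par A/par t}, where $k(\alpha',\beta')$ is the $K_0/M$-component of the Iwasawa decomposition of $kn(z,\tau)a(t)$.

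The difference lies in how the lower bound on $1-\mathrm{Re}(\alpha')$ is extracted. You run a soft continuity-plus-compactness argument: you show $\mathrm{Re}(\alpha')=1$ forces $k\in M$ (via $K_0\cap NAM=M$), and then invoke compactness of $\{d(k,M)\geq\delta\}\times\overline{\cD}$ to get a positive minimum. The paper instead proceeds quantitatively: it uses Lemma~\ref{lem: change of variable} (that $k\mapsto\kappa(kg)$ is a diffeomorphism of $M\backslash K_0$) to deduce $d(k(\alpha',\beta'),e)\gg_{\cD}\delta$, and then the explicit algebra $|\alpha'-1|^2+|\beta'|^2=2(1-\mathrm{Re}(\alpha'))$ yields $1-\mathrm{Re}(\alpha')\gg\delta^2$. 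Your route is more elementary and avoids invoking Lemma~\ref{lem: change of variable}; the paper's route gives an explicit dependence $\sigma\asymp\delta^2$, though the lemma as stated does not require this.
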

\begin{proof}
    Define the functions $\alpha,\beta: K_0\times\cD\to \BC$ by requiring that
    \begin{align*}
        kn(z,\tau) a(t) \in NAMk(\alpha(k,z,\tau,t),\beta(k,z,\tau,t)).
    \end{align*}
    If $d(k,M)\geq\delta$, then by Lemma \ref{lem: change of variable} $d(k(\alpha,\beta),e)\gg_\cD\delta$ for all $(z,\tau,t)\in\cD$. Hence, by \eqref{eq: isom from SU(2) to K0/M}, $\min\{|\alpha-1|^2,|\beta|^2\}\gg\delta^2$. Using $|\alpha|^2+|\beta|^2=1$ and $|\alpha-1|^2+|\beta|^2\gg\delta^2$, we obtain that $1-\mathrm{Re}(\alpha)\gg\delta^2$.
    For $(z,\tau,t)\in\cD$ and $d(k,M)\geq\delta$, \eqref{eq: explicit formula for par A/par t} implies that
    \begin{align*}
        1-\frac{\partial}{\partial t} A(k n(z,\tau) a(t)) = 1-\mathrm{Re}(\alpha(k,z,\tau,t))\gg\delta^2,
    \end{align*}
   which completes the proof.
\end{proof}

\subsection{Derivative with respect to \texorpdfstring{$K_0$}{K0}}

We shall also need the following estimate for the gradient with respect to $B=K_0/M$.
\begin{lemma}\label{lem: der wrt K0}
     Let $1/4 > \delta >0$ and $\cD\subset\BC\times\BR\times\BR$ be a bounded open set. There exists a small constant $C>0$ depending on $\cD$,  such that
     \begin{align*}
         &\left|\left( \frac{\partial}{\partial r_1},  \frac{\partial}{\partial r_2},\frac{\partial}{\partial r_3}\right)   A(\exp(r_1\tilde{X_1}+r_2\tilde{X_2}+r_3\tilde{X_3})n(z,\tau) a(t))   \right|^2\\
         :=&  \sum_{i=1}^3 \left(\frac{\partial}{\partial r_i}   A(\exp(r_1\tilde{X_1}+r_2\tilde{X_2}+r_3\tilde{X_3})n(z,\tau) a(t))   \right)^2
         \asymp  |z|^2+\tau^2,
     \end{align*}
     for $(z,\tau,t)\in\cD$ with $|z|^2+\tau^2\geq\delta^2$, and $r=(r_1^2+r_2^2+r_3^2)^{1/2}<C\delta$. Here $\tilde{X_1}, \tilde{X_2},\tilde{X_3}$ are defined as in \eqref{eq: basis for k/m}.
\end{lemma}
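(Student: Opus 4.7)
The plan is to reduce the gradient estimate to an explicit computation at $r=0$, where $\exp(r_1\tilde{X_1}+r_2\tilde{X_2}+r_3\tilde{X_3})=I$, and then bootstrap to small $r$ by a smoothness argument. First I would use \eqref{eq: exp ri tilde} to write $\exp(\sum_i r_i\tilde{X_i})=k(\alpha(r),\beta(r))$ with
\[
\alpha(r)=\cos r+ir^{-1}\sin r\cdot r_3,\qquad \beta(r)=r^{-1}\sin r\cdot(ir_1-r_2),
\]
so that $\alpha(0)=1$, $\beta(0)=0$, and the first-order partials at $r=0$ are
\[
\partial_{r_1}\alpha=\partial_{r_2}\alpha=0,\quad \partial_{r_3}\alpha=i,\quad \partial_{r_1}\beta=i,\quad \partial_{r_2}\beta=-1,\quad \partial_{r_3}\beta=0.
\]

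Next I would apply Lemma \ref{lem: explicit formula for A} (with trivial $N$, $A$, $M$ components), giving
\[
A\bigl(\exp(\textstyle\sum r_i\tilde{X_i})n(z,\tau)a(t)\bigr)=t-\log|f(\alpha,\beta)|^2,
\]
where $f(\alpha,\beta)=\tfrac{\alpha-1}{2}(-e^t-|z|^2+i\tau)-\beta\bar{z}+\tfrac{\alpha+1}{2}$. One checks $f|_{r=0}=1$, so $\partial_{r_i}A|_{r=0}=-2\Re(\partial_{r_i}f|_{r=0})$. A short chain-rule computation then yields
\[
\nabla_r A|_{r=0}=\bigl(2\Im(z),\,-2\Re(z),\,\tau\bigr),
\]
with squared Euclidean norm exactly $4|z|^2+\tau^2$. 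This is already $\asymp |z|^2+\tau^2$ with absolute constants, and it is bounded below by $\delta^2$ whenever $|z|^2+\tau^2\geq\delta^2$.

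Finally, to extend the estimate to $0<r<C\delta$, I would use that the function $(r,z,\tau,t)\mapsto A(\exp(\sum r_i\tilde{X_i})n(z,\tau)a(t))$ is real analytic on a neighbourhood of $\{|r|\leq 1\}\times\overline{\cD}$, so its $r$-Hessian is bounded uniformly on that set by some $M=M_\cD>0$. Taylor's theorem then gives $|\nabla_r A(r)-\nabla_r A(0)|\leq Mr$, hence
\[
|\nabla_r A(r)|^2 = 4|z|^2+\tau^2 + O\!\left(Mr\sqrt{|z|^2+\tau^2}+M^2r^2\right).
\]
Under $|z|^2+\tau^2\geq\delta^2$ and $r<C\delta$, both error terms are bounded by a constant times $C(|z|^2+\tau^2)$; choosing $C$ small enough (depending on $\cD$) absorbs them into the main term, proving the two-sided bound $|\nabla_r A(r)|^2\asymp|z|^2+\tau^2$. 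The only real obstacle is bookkeeping: verifying that $M_\cD$ and $C$ can be chosen uniformly on the bounded set $\cD$, which is immediate from smoothness and compactness of $\overline{\cD}$.
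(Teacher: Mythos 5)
Your proposal is correct and follows essentially the same route as the paper: both use the explicit formula of Lemma \ref{lem: explicit formula for A} together with the $k(\alpha,\beta)=\exp(\sum r_i\tilde X_i)$ parametrization from \eqref{eq: exp ri tilde} to compute $\nabla_r A|_{r=0}=(2\Im z,-2\Re z,\tau)$ and then propagate to small $r$ by smoothness. You are a bit more careful than the paper's terse ``$4|z|^2+\tau^2+O(r)$, then the result follows'' in spelling out that the error is really $O(r\sqrt{|z|^2+\tau^2}+r^2)$ and is absorbed precisely because $r<C\delta\le C\sqrt{|z|^2+\tau^2}$ — that extra precision is exactly what the hypothesis $r<C\delta$ is there for.
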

\begin{proof}
    We write $k(\alpha,\beta)=\exp(r_1\tilde{X_1}+r_2\tilde{X_2}+r_3\tilde{X_3})$ with $\alpha,\beta$ functions of $r_1,r_2,r_3$ given by \eqref{eq: exp ri tilde}. It may be seen that
    \begin{align}
        \alpha=1+ir_3 + r^2 A(r_1,r_2,r_3), \label{eq: taylor for alpha}\\
        \beta=ir_1-r_2+ r^2 B(r_1,r_2,r_3).\label{eq: taylor for beta}
    \end{align}
    where $A(r_1,r_2,r_3)$ and $B(r_1,r_2,r_3)$ are bounded and have bounded derivatives when $r\ll 1$.
    By Lemma \ref{lem: explicit formula for A},
    \[
    A(\exp(r_1\tilde{X_1}+r_2\tilde{X_2}+r_3\tilde{X_3})n(z,\tau) a(t))=t-\log\left(\left| \frac{\alpha-1}{2}(-e^t-|z|^2+i\tau)-\beta \overline{z}+\frac{\alpha+1}{2} \right|^{2}\right).
    \]
    Applying \eqref{eq: taylor for alpha} and \eqref{eq: taylor for beta} gives that 
    \begin{align*}
        \left(\frac{\partial}{\partial r_1},\frac{\partial}{\partial r_2},\frac{\partial}{\partial r_3}\right)A(\exp(r_1\tilde{X_1}+r_2\tilde{X_2}+r_3\tilde{X_3})n(z,\tau) a(t))=(2\mathrm{Im}(z),-2\mathrm{Re}(z),\tau) + O(r),
    \end{align*}
    whose squared magnitude is $4|z|^2+\tau^2+O(r)$.
    Then the result follows.
\end{proof}

\section{Bounds near the spectrum}\label{sec: bd near the spectrum}
In this section, we prove the bound \eqref{eq: bd for I(g)} in Proposition \ref{prop: bound for I}, by building up the integral $I(\lambda,\phi,g)$ in several steps.
\subsection{Oscillatory integrals}
We first estimate two one-dimensional integrals that appear in $I(\lambda,\phi,g)$.

\begin{proposition}\label{prop: first one dimensional intgeral estimate}
    Let $B,C>0$ and $\epsilon_0,\epsilon'>0$ be constants.
    Let $\chi_0\in C_c^\infty (\BR)$ be a smooth function supported in $[-1,1]$. 
    If $(z,\tau,t)\in\BC\times\BR\times\BR$ satisfying $|z|,|\tau|,|t|<C$, $k\in K_0$, and $s^\prime>0$ satisfying
    \begin{align}\label{eq: cond for k,s first int}
        d(k,M)\geq Bs^{-1/2 + \epsilon_0}\beta^{1/2}\quad\text{ and }\quad       |s^\prime-s|\leq 3\beta
    \end{align}
    for some $s,\beta\geq 1$ with $\beta\leq 3s^{1-\epsilon'}$, then
    \begin{align}\label{eq: first int where k big}
        \int_{-\infty}^\infty \chi_0(t^\prime) \exp\left( is^\prime t^\prime - is A(kn(z,\tau)a(t+t^\prime))  \right) dt^\prime\ll s^{-A}.
    \end{align}
    The implied constant depends on $A,B,C,\epsilon_0,\epsilon'$ and the size of the first $n$ derivatives of $\chi_0$, where $n$ depends on $\epsilon_0,\epsilon'$ and $A$.
\end{proposition}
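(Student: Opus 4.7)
The plan is to treat this as a non-stationary phase estimate. Writing $\phi(t') := s' t' - s A(kn(z,\tau)a(t+t'))$ for the phase, one has
\[
\phi'(t') = (s'-s) + s\left(1 - \tfrac{\partial}{\partial t'} A(kn(z,\tau)a(t+t'))\right),
\]
and the strategy is to show that $|\phi'(t')|$ is large while the ratios $|\phi^{(n)}/\phi'|$ for $n \geq 2$ remain uniformly bounded on $\supp\chi_0$; then iterated integration by parts via the operator $L = (i\phi')^{-1}\tfrac{d}{dt'}$ yields the desired rapid decay.

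The first move is to use the left $M$-invariance of the Iwasawa $A$-projection, which holds since $M$ centralizes $A$ and normalizes $N$: $A(mg) = A(g)$ for all $m\in M$. Choosing a minimizer of $d(k,m)$ over $m\in M$, I write $k = m\exp(r_1\tilde{X}_1+r_2\tilde{X}_2+r_3\tilde{X}_3)$ with $r := (r_1^2+r_2^2+r_3^2)^{1/2} \asymp d(k,M)$, so by $M$-invariance I may replace $k$ by $\exp(rX)$ with $X\in U_{\fk/\fm}$; by hypothesis $r \gtrsim B s^{-1/2+\epsilon_0}\beta^{1/2}$. Fix a bounded open $\cD \subset \BC\times\BR\times\BR$ containing all $(z,\tau,t+t')$ for $t'\in\supp\chi_0$, and let $\delta,\sigma_0>0$ be the constants furnished by Lemma~\ref{lem: uniformazation lemma} on $\cD$.

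Now I split on the size of $r$. When $r \geq \delta$, Lemma~\ref{lem: uniform boundedness} supplies $\sigma_1>0$ with $1 - \partial A/\partial t' \geq \sigma_1$, so $|\phi'(t')| \geq s\sigma_1 - 3\beta \geq s\sigma_1/2$ for $s$ large (using $\beta \leq 3s^{1-\epsilon'}$), while the higher derivatives satisfy $|\phi^{(n)}(t')| \ll_n s$, giving $|\phi^{(n)}/\phi'| = O(1)$. When $r < \delta$, Lemma~\ref{lem: uniformazation lemma} gives $\partial A/\partial t' = 1 - r^2\xi$ with $|\xi|\geq\sigma_0$, so $|s(1-\partial A/\partial t')| = sr^2|\xi| \geq \sigma_0 B^2 s^{2\epsilon_0}\beta$. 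Once $s$ is large relative to $B,\sigma_0,\epsilon_0$ this dominates $|s'-s|\leq 3\beta$, giving $|\phi'(t')| \geq \sigma_0 sr^2/2 \gtrsim s^{2\epsilon_0}\beta$. Crucially, for $n\geq 2$ we have $\partial^n A/\partial t'^n = -r^2\,\partial^{n-1}\xi/\partial t'^{n-1}$ with the derivatives of $\xi$ bounded by~\eqref{eq: xi estimate}, so $|\phi^{(n)}(t')| \ll_n sr^2$, and once again $|\phi^{(n)}/\phi'| = O(1)$.

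With bounded ratios in hand, $N$ applications of $L$ bound the integral by a constant (depending on the $C^N$-norm of $\chi_0$) times $|\phi'|^{-N}$: this is $O(s^{-N})$ in the first case and $O((s^{2\epsilon_0})^{-N})$ in the second. Choosing $N \geq A/(2\epsilon_0)$ yields the claimed $\ll s^{-A}$. The main obstacle is the small-$r$ case: the threshold $d(k,M) \geq Bs^{-1/2+\epsilon_0}\beta^{1/2}$ is calibrated precisely so that $sr^2$ beats the worst case $|s'-s|=O(\beta)$ by a factor $s^{2\epsilon_0}$, and the matching $r^2$-scaling of all higher derivatives in Lemma~\ref{lem: uniformazation lemma} is essential for the non-stationary phase to close; without it, the crude bound $|\phi^{(n)}|=O(s)$ against $|\phi'|\gtrsim s^{2\epsilon_0}$ would give ratios growing as $s^{1-2\epsilon_0}$ and destroy the decay.
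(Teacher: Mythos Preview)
Your proof is correct and follows essentially the same approach as the paper: both split into the cases $r<\delta$ and $r\geq\delta$, invoke Lemma~\ref{lem: uniformazation lemma} in the former to extract the crucial $r^2$-scaling of all $t$-derivatives of $A$, invoke Lemma~\ref{lem: uniform boundedness} in the latter, and finish by non-stationary phase. The only cosmetic differences are that the paper pulls out the factor $sr^2$ to rewrite the phase as $sr^2\Psi(t')$ with $\Psi'$ bounded above and below (rather than tracking the ratios $\phi^{(n)}/\phi'$ directly), and that you make the reduction to $k=\exp(rX)$ via left $M$-invariance of $A$ explicit, whereas the paper leaves this implicit.
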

\begin{proof}
    We denote by $\cD\subset\BC\times\BR\times\BR$ the set consisting of triples $(z,\tau,t)$ satisfying  $|z|,|\tau|,|t|<C+2$. By applying Lemma \ref{lem: uniformazation lemma}, we see that there is a $\delta>0$ and a nonvanishing real analytic function  $\xi$ on $(-\delta,\delta)\times U_{\fk/\fm} \times \cD$ such that
    \begin{align*}
        \frac{\partial}{\partial t} A(\exp(rX)n(z,\tau)a(t)) = 1-r^2\xi(r,X,z,\tau,t),
    \end{align*}
    when $r\in (-\delta,\delta)$, $X\in U_{\fk/\fm}$, $(z,\tau,t)\in \cD$. If $Z(r,X,z,\tau,t)$ is an antiderivative of $\xi$ with respect to $t$ that is smooth as a function of $(r,X,z,\tau,t)$, we may integrate this to obtain
    \begin{align*}
        A(\exp(rX)n(z,\tau)a(t)) = t - r^2 Z(r,X,z,\tau,t) + c(r,X,z,\tau)
    \end{align*}
    for some function $c(r,X,z,\tau)$.
    
    If $k=\exp(rX)$ for some $r\in (-\delta,\delta)$ and $X\in U_{\fk/\fm}$, we may use this to rewrite the integral \eqref{eq: first int where k big} as
    \begin{align}\label{eq: rewrite 1st int where k big}
         &\int_{-\infty}^\infty \chi_0(t^\prime) \exp\left( is^\prime t^\prime - is A(\exp(rX)n(z,\tau)a(t+t^\prime))  \right) dt^\prime\notag\\
         =&\int_{-\infty}^\infty \chi_0(t^\prime) \exp\left( is^\prime  t^\prime  - is (t+t^\prime) + isr^2 Z(r,X,z,\tau,t+t^\prime) - is c(r,X,z,\tau)  \right) dt^\prime \notag\\
         =& e^{-is(t+c(r,X,z,\tau))} \int_{-\infty}^\infty \chi_0(t^\prime) \exp\left( i(s^\prime-s)t^\prime + isr^2 Z(r,X,z,\tau,t+t^\prime)\right) dt^\prime  \notag \\
         =& e^{-is(t+c(r,X,z,\tau))} \int_{-\infty}^\infty \chi_0(t^\prime) \exp\left( isr^2 \Psi(t^\prime)\right) dt^\prime,
    \end{align}
    where we define
    \begin{align*}
        \Psi(t^\prime) = Z(r,X,z,\tau,t+t^\prime) + s^{-1}r^{-2}(s^\prime-s)t^\prime .
    \end{align*}

    Our assumption \eqref{eq: cond for k,s first int} implies that
    $r \geq B_1 s^{-1/2 + \epsilon_0}\beta^{1/2}$ for some constant $B_1 > 0$, and
    \begin{align*}
        |s^{-1}r^{-2}(s^\prime-s)| \ll s^{-1} \left( B_1 s^{-1/2 + \epsilon_0}\beta^{1/2} \right)^{-2} \beta \ll s^{-2\epsilon_0}.
    \end{align*}
    Hence,
    \begin{align}\label{eq: est for Psi}
        \begin{split}
            \Psi &= Z(r,X,z,\tau,t+t^\prime) + O\left(s^{-2\epsilon_0}\right)t^\prime, \quad\text{ and }\\
            \frac{\partial\Psi}{\partial t^\prime}&= \xi(r,X,z,\tau,t+t^\prime)  + O\left(s^{-\epsilon_0}\right).
        \end{split}
    \end{align}
    It follows from \eqref{eq: xi estimate} and \eqref{eq: est for Psi} that there exists a $\sigma>0$ so that, for $s$ sufficiently large, $|\partial\Psi/\partial t^\prime| > \sigma/2$ for all $r\in(-\delta,\delta)$, $|z|,|\tau|,|t|<C$ and $t^\prime\in(-1,-1)$. In addition, all other derivatives of $\Psi$ are bounded from above. As $r \geq B_1 s^{-1/2 + \epsilon_0}\beta^{1/2}$ implies that $sr^2 \geq  B_1^2 s^{2\epsilon_0}\beta \geq B_1^2 s^{2\epsilon_0}$, the bound \eqref{eq: first int where k big} follows by integration by parts in \eqref{eq: rewrite 1st int where k big}.

    In the case where $k$ can not be written as $\exp(rX)$ for $r\in (-\delta,\delta)$ and $X\in U_{\fk/\fm}$, we have $d(k,M)\gg\delta$, so Lemma \ref{lem: uniform boundedness} implies that
    \begin{align*}
        1-\frac{\partial}{\partial t^\prime} A(k n(z,\tau) a(t+t^\prime)) \geq C_1
    \end{align*}
    for some $C_1 > 0$. It gives
    \begin{multline*}
        \left|\frac{\partial}{\partial t^\prime} \left(s^\prime s^{-1} t^\prime - A(kn(z,\tau)a(t+t^\prime)) \right)\right| \geq \left(  1-\frac{\partial}{\partial t^\prime} A(k n(z,\tau) a(t+t^\prime)) \right)- \left|s^\prime s^{-1} - 1 \right|\\ 
        \geq  \,C_1 - 3\beta s^{-1}  =C_1 + O(s^{-\epsilon'})\gg1.
    \end{multline*}
    The result also follows by the integration by parts.
\end{proof}

The second one-dimensional integral that we shall estimate is as follows.

\begin{proposition}\label{prop: second one dimensional integral estimate}
    Let $B,C>0$ and $\epsilon_0,\epsilon'>0$ be constants. Let $\chi_0\in C_c^\infty(\BR)$ be a smooth function supported in $[-1,1]$. If $(z,\tau,t)\in\BC\times\BR\times\BR$ and $s^\prime > 0$ satisfying
    \begin{align*}
        |z|,|\tau|,|t|<C,\qquad |z|^2+|\tau|^2\geq \left(B s^{-1/2 + \epsilon_0} \beta^{1/2}\right)^2\qquad\text{ and }\quad|s^\prime-s|\leq 3\beta
    \end{align*}
    for some $s,\beta\geq 1$ with $\beta\leq 3s^{1-\epsilon'}$,
    then
    \begin{align}\label{eq: second int est}
        \int_{-\infty}^\infty \chi_0(t^\prime) e^{is^\prime t^\prime} \varphi_s\left(n(z,\tau)a(t+t^\prime) \right) dt^\prime       \ll s^{-A}.
    \end{align}
    The implied constant depends on $A,B,C,\epsilon_0,\epsilon'$ and the size of the first $n$ derivatives of $\chi_0$, where $n$ depends on $\epsilon_0,\epsilon'$ and $A$.
\end{proposition}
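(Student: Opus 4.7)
The plan is to expand the spherical function via the Harish--Chandra integral formula \eqref{eq:HC K int formula} and then reduce to Proposition \ref{prop: first one dimensional intgeral estimate} on the bulk of the $\overline{k}$-integration, treating the remaining neighborhood of $M$ by integration by parts in $\overline{k}$. Since $\varphi_s$ is real for $s\in\BR$, one may write
\[
\varphi_s(n(z,\tau)a(t+t^\prime))=\int_{M\backslash K_0} e^{(1-is)A(\overline{k}n(z,\tau)a(t+t^\prime))}\,d\overline{k},
\]
so Fubini converts \eqref{eq: second int est} into
\[
\int_{M\backslash K_0}\int_{-\infty}^\infty \chi_0(t^\prime)\,e^{A(\overline{k}n(z,\tau)a(t+t^\prime))}\,e^{i\bigl(s^\prime t^\prime-sA(\overline{k}n(z,\tau)a(t+t^\prime))\bigr)}\,dt^\prime\,d\overline{k}.
\]
Let $C_1$ be the constant from Lemma \ref{lem: der wrt K0} applied with $\delta=Bs^{-1/2+\epsilon_0}\beta^{1/2}$, and set $R=(C_1/4)Bs^{-1/2+\epsilon_0}\beta^{1/2}$. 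Introduce a smooth partition of unity $1=\chi_1+\chi_2$ on $M\backslash K_0$ with $\chi_2$ supported in $\{d(\overline{k},M)\leq 2R\}$, equal to $1$ on $\{d(\overline{k},M)\leq R\}$, and $|\partial^\alpha\chi_2|\ll R^{-|\alpha|}$.

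For the $\chi_1$-region, $d(\overline{k},M)\geq R=(C_1B/4)s^{-1/2+\epsilon_0}\beta^{1/2}$ matches the hypothesis of Proposition \ref{prop: first one dimensional intgeral estimate} with its constant $B$ replaced by $C_1B/4$. The smooth factor $e^{A(\overline{k}n(z,\tau)a(t+t^\prime))}$ has $t^\prime$-derivatives uniformly bounded on the compact support of $\chi_0$ and for $\overline{k}$ in the compact set $M\backslash K_0$, so it can be absorbed into the test function. Proposition \ref{prop: first one dimensional intgeral estimate} then yields $O(s^{-A})$ for the inner $t^\prime$-integral uniformly in $\overline{k}$, and integration over $M\backslash K_0$ preserves this bound.

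For the $\chi_2$-region, parametrize $\overline{k}=\exp(r_1\tilde X_1+r_2\tilde X_2+r_3\tilde X_3)$ via \eqref{eq: basis for k/m} so that $\chi_2$ is supported in $|r|\leq 2R$. Since $\sqrt{|z|^2+\tau^2}\geq Bs^{-1/2+\epsilon_0}\beta^{1/2}$ and $|r|\leq 2R\leq C_1 Bs^{-1/2+\epsilon_0}\beta^{1/2}\leq C_1\sqrt{|z|^2+\tau^2}$, Lemma \ref{lem: der wrt K0} applies and gives
\[
|\nabla_r A(\overline{k}n(z,\tau)a(t+t^\prime))|\asymp \sqrt{|z|^2+\tau^2}\geq Bs^{-1/2+\epsilon_0}\beta^{1/2}.
\]
Rescaling $\tilde r=r/R$, the amplitude $\chi_2(R\tilde r)e^{A(\exp(R\tilde r\cdot\tilde X)n(z,\tau)a(t+t^\prime))}$ has uniformly bounded $\tilde r$-derivatives, while the phase $\tilde\Psi(\tilde r):=-sA(\exp(R\tilde r\cdot\tilde X)n(z,\tau)a(t+t^\prime))$ satisfies $|\nabla_{\tilde r}\tilde\Psi|\gtrsim sR\sqrt{|z|^2+\tau^2}\gtrsim s^{2\epsilon_0}\beta\geq s^{2\epsilon_0}$ and $|\partial_{\tilde r}^\alpha\tilde\Psi|\ll sR^{|\alpha|}$, so that the ratios $|\partial^\alpha\tilde\Psi|/|\nabla\tilde\Psi|^{|\alpha|-1}$ for $|\alpha|\geq 2$ are controlled once $sR\gg 1$, which holds for $s$ sufficiently large. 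Standard non-stationary phase in $\tilde r$, followed by undoing the rescaling, then gives, for any $N$,
\[
\left|\int \chi_2(\overline{k})\,e^{(1-is)A(\overline{k}n(z,\tau)a(t+t^\prime))}\,d\overline{k}\right|\ll R^3\bigl(s^{2\epsilon_0}\beta\bigr)^{-N}\ll s^{-2N\epsilon_0},
\]
uniformly in $t^\prime$, because $R$ is bounded in the nontrivial regime. Multiplying by $\chi_0(t^\prime)e^{is^\prime t^\prime}$ and integrating over the compact support in $t^\prime$ gives $\ll s^{-A}$ for $N$ sufficiently large.

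The main obstacle I anticipate is coordinating the constants in the partition of unity: the cutoff $R$ must be small enough that the support of $\chi_2$ lies strictly inside the domain of validity of Lemma \ref{lem: der wrt K0} so that the $\overline{k}$-gradient of $A$ is nondegenerate, yet $d(\overline{k},M)\geq R$ must still meet the threshold of Proposition \ref{prop: first one dimensional intgeral estimate}. The choice $R=(C_1/4)Bs^{-1/2+\epsilon_0}\beta^{1/2}$ above threads this needle, and the non-stationary phase in the rescaled variables is valid precisely because the $r$-derivatives of $A$ are bounded on compact sets and $sR\gg 1$ for $s$ sufficiently large.
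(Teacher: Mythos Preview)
Your proposal is correct and follows essentially the same approach as the paper: expand $\varphi_s$ via the Harish--Chandra integral, split $M\backslash K_0$ by a partition of unity at scale $\asymp s^{-1/2+\epsilon_0}\beta^{1/2}$, invoke Proposition~\ref{prop: first one dimensional intgeral estimate} on the region away from $M$, and on the near-$M$ region bound the inner $\overline{k}$-integral (uniformly in $t'$) by non-stationary phase using Lemma~\ref{lem: der wrt K0} after the same rescaling. The only differences are cosmetic---your $\chi_1,\chi_2$ are swapped relative to the paper's, and your cutoff radius $R=(C_1/4)Bs^{-1/2+\epsilon_0}\beta^{1/2}$ carries an explicit factor, whereas the paper rescales by $\delta=Bs^{-1/2+\epsilon_0}\beta^{1/2}$ directly and adjusts the constant $C_1$ in the support of the cutoff instead.
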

\begin{proof}
    If we apply the functional equation $\varphi_s = \varphi_{-s}$, and substitute $\varphi_{-s}$ by the formula \eqref{eq:HC K int formula} into the left-hand side of \eqref{eq: second int est}, it becomes
    \begin{align*}
        \int_{-\infty}^\infty \int_{M\backslash K_0} \chi_0(t^\prime) \exp\left( is^\prime t^\prime  +(1-is) A(kn(z,\tau)a(t+t^\prime))\right) d\overline{k} dt^\prime .
    \end{align*}
   Let $C_1>0$ be a constant to be chosen later. Let $\chi_1,\chi_2\in C_c^\infty(M\backslash K_0)$ be such that $0\leq \chi_1,\chi_2\leq 1$, $\chi_1 + \chi_2 \equiv 1$, and
    \begin{align*}
        &\supp(\chi_1)\subset \left\{ Mk \in M\backslash K_0 \mid k = \exp(rX), r\in [-2C_1 s^{-1/2+\epsilon_0}\beta^{1/2},2C_1 s^{-1/2+\epsilon_0}\beta^{1/2}], X\in U_{\fk/\fm} \right\},           \\
        &\supp(\chi_2)\subset M\backslash K_0 - \left\{ Mk \in M\backslash K_0 \mid  k = \exp(rX), r\in [-C_1 s^{-1/2+\epsilon_0}\beta^{1/2},C_1 s^{-1/2+\epsilon_0}\beta^{1/2}], X\in U_{\fk/\fm} \right\}.
    \end{align*}
    Proposition \ref{prop: first one dimensional intgeral estimate} implies that
    \begin{align*}
        \int_{-\infty}^\infty \int_{M\backslash K_0} \chi_2(\overline{k}) \chi_0(t^\prime) \exp\left( is^\prime t^\prime  +(1-is) A(kn(z,\tau)a(t+t^\prime))\right) d\overline{k} dt^\prime \ll_A s^{-A}.
    \end{align*}
    Hence, it suffices to estimate
    \begin{align*}
         &\int_{-\infty}^\infty \int_{M\backslash K_0} \chi_1(\overline{k}) \chi_0(t^\prime) \exp\left( is^\prime t^\prime   +(1-is) A(kn(z,\tau)a(t+t^\prime))\right) d\overline{k} dt^\prime\\
         =&\int_{-\infty}^\infty\left( \int_{M\backslash K_0} \chi_1(\overline{k})  \exp\left(   (1-is) A(kn(z,\tau)a(t+t^\prime))\right) d\overline{k} \right)\chi_0(t^\prime)e^{is^\prime t^\prime } dt^\prime.
    \end{align*}
    We shall do this by showing the inner integral above is $O(s^{-A})$ as well. We use the local chart given by \eqref{eq: basis for k/m} to rewrite the inner integral as
    \begin{align}\label{eq: oscillatory int on K}
         \iiint_{-\infty}^\infty  \tilde{\chi_1}(r_1,r_2,r_3)\exp\left(   -is A(\exp(r_1\tilde{X_1} + r_2\tilde{X_2}+r_3\tilde{X_3})n(z,\tau)a(t))\right) dr_1 dr_2 dr_3.
    \end{align}
    Here $|z|,|\tau|<C$ satisfy $|z|^2+|\tau|^2 \geq (Bs^{-1/2+\epsilon_0}\beta^{1/2})^2$ and $t\in (-C-1,C+1)$, and $\tilde{\chi_1}(r_1,r_2,r_3) = \tilde{\chi_1}(r_1,r_2,r_3;z,t)$ is the smooth compactly supported function obtained by combining all of the amplitude factors. Moreover,
    \begin{align*}
        \supp(\tilde{\chi_1}) \subset \left\{ (r_1,r_2,r_3)\in\BR^3 \mid r =(r_1^2 +  r_2^2+r_3^2)^{1/2} \in [-2C_1 s^{-1/2+\epsilon_0}\beta^{1/2},{2}C_1 s^{-1/2+\epsilon_0}\beta^{1/2}]  \right\}.
    \end{align*}
    
    We let $\delta=Bs^{-1/2+\epsilon_0}\beta^{1/2}$ and $\phi(r_1,r_2,r_3)=A(\exp(r_1\tilde{X_1} + r_2\tilde{X_2}+r_3\tilde{X_3})n(z,\tau)a(t))$.
Lemma \ref{lem: der wrt K0}  implies that if $C_1$ is chosen to be sufficiently small then
    \begin{align*}
         \left|\nabla  \phi(r_1,r_2,r_3)  \right|^2 \asymp  |z|^2+|\tau|^2 \ge \delta^2,
     \end{align*}
     when $|z|,|\tau|,|t|<C+1$, $|z|^2+|\tau|^2\ge\delta^2$ and $(r_1,r_2,r_3) \in \supp(\tilde{\chi_1})$. Moreover, \eqref{eq: oscillatory int on K} equals
    \begin{multline*}
        \int  \tilde{\chi_1}(r_1,r_2,r_3)e^{  -is \phi(r_1,r_2,r_3)} dr_1 dr_2 dr_3 
        = \delta^3 \int \tilde{\chi_1}(\delta(r_1,r_2,r_3)) e^{-i(s\delta^2)(\delta^{-2}\phi(\delta (r_1,r_2,r_3))} dr_1dr_2dr_3.
    \end{multline*}
    The function $\tilde{\chi_1}(\delta(r_1,r_2,r_3))$ is now a cutoff function at scale $1$, and $(\delta)^{-2}\phi(\delta( r_1,r_2,r_3))$ satisfies
    \(     \nabla\left( \delta^{-2}\phi(\delta(r_1,r_2,r_3)) \right)\gg 1\) and all of its higher derivatives are bounded
    for $(r_1,r_2,r_3)\in \supp(\tilde{\chi_1}(\delta\cdot))$. Note that $s\delta^2\gg s^{2\epsilon_0}\beta$, so the desired bound for \eqref{eq: oscillatory int on K} follows by integration by parts.
\end{proof}

\subsection{Proof of \eqref{eq: bd for I(g)} in Proposition \ref{prop: bound for I}}
Let $\chi_0 \in C_c^\infty(\BR)$ be a smooth function supported in $[-1,1]$, $s_1,s_2 \in \BR$ and $g\in G_0$. We define
\begin{align}\label{eq: integral J(s t1 t2 g)}
    J(s,s_1,s_2,g;\chi_0)= \iint_{-\infty}^\infty \chi_0(t_1) \chi_0(t_2) e^{-is_1 t_1 + is_2 t_2} \varphi_s \left( a(-t_1)ga(t_2) \right) dt_1 dt_2.
\end{align}
We now combine the one-dimensional results Proposition \ref{prop: first one dimensional intgeral estimate} and \ref{prop: second one dimensional integral estimate} to bound $J(s,s_1,s_2,g;\chi_0)$.

\begin{proposition}\label{prop: nonstationary oscillatory integral estimate}
    Let $B_0,C_0 >0$ and $1/8>\epsilon_0,\epsilon'>0$ be given. If $g\in G_0$ satisfies
    \begin{align*}
        d(g,e)\leq C_0\quad\text{ and }\quad d(g,MA)\geq B_0 s^{-1/2 + \epsilon_0}\beta^{1/2}
    \end{align*}
    for some $s,\beta\geq 1$ satisfying $\beta \leq 3 s^{1-\epsilon'}$, and $s_1,s_2\in [s-3\beta,s+3\beta]$, then
    \begin{align*}
        J(s,s_1,s_2,g;\chi_0)\ll s^{-A}.
    \end{align*}
    The implied constant depends on $A,B_0,C_0, \epsilon_0,\epsilon'$ and the size of the first $n$ derivatives of $\chi_0$, where $n$ depends on $\epsilon_0,\epsilon'$ and $A$.
\end{proposition}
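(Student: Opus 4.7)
The goal is to reduce the two-dimensional oscillatory integral $J(s,s_1,s_2,g;\chi_0)$ to iterated one-dimensional integrals and appeal to Proposition \ref{prop: first one dimensional intgeral estimate}. I would begin by expanding the spherical function via the Harish-Chandra formula (valid since $\varphi_s=\varphi_{-s}$):
\[
\varphi_s(a(-t_1)ga(t_2))=\int_{M\backslash K_0}\exp((1-is)A(\bar{k}a(-t_1)ga(t_2)))\,d\bar{k},
\]
turning $J$ into a triple integral. For each fixed $t_1$ I would perform the change of variable $\bar{k}\mapsto\bar{k}'''=\kappa(\bar{k}a(-t_1)g)$, a diffeomorphism of $M\backslash K_0$ with a smooth, bounded Jacobian $\rho(\bar{k}''',t_1)$. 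Two applications of Lemma \ref{lem: spliting A} then give the separation
\[
A(\bar{k}a(-t_1)ga(t_2))=-A(\bar{k}'''g^{-1}a(t_1))+A(\bar{k}'''a(t_2)),
\]
after which the integrand factors as a product of a $t_1$-piece and a $t_2$-piece (up to the Jacobian $\rho$).

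Next I would introduce a smooth partition $\chi_{\rm far}+\chi_{\rm near}\equiv 1$ on $M\backslash K_0$ at the scale $\delta:=B_1 s^{-1/2+\epsilon_0}\beta^{1/2}$. On the \emph{far} region $\{d(\bar{k}''',M)\ge\delta\}$, the $t_2$-integral is of the form
\[
\int\chi_0(t_2)\,e^{is_2t_2-isA(\bar{k}'''a(t_2))}\,e^{A(\bar{k}'''a(t_2))}\,dt_2,
\]
which is exactly the object controlled by Proposition \ref{prop: first one dimensional intgeral estimate} (with $k=\bar{k}'''$, $z=\tau=t=0$, $s'=s_2$). Since $|s_2-s|\le 3\beta$, this yields $O_A(s^{-A})$ for the inner integral, and integrating against the remaining bounded factors gives $O(s^{-A})$ for this contribution.

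On the \emph{near} region $\{d(\bar{k}''',M)<\delta\}$ I would instead extract decay from the $t_1$-integral. Writing $\bar{k}'''g^{-1}=n'a(r')k^\sharp$ in Iwasawa form with $k^\sharp=\kappa(\bar{k}'''g^{-1})$, the $t_1$-phase equals $-s_1t_1-sA(k^\sharp a(t_1))+\mathrm{const}$, so its derivative is $-s_1-s\cdot\mathrm{Re}(\alpha^\sharp)+O(t_1)$, where $k^\sharp\equiv k(\alpha^\sharp,\beta^\sharp)\bmod M$. The crucial quantitative input is that the hypothesis $d(g,MA)\ge B_0s^{-1/2+\epsilon_0}\beta^{1/2}$ controls how far $\bar{k}'''g^{-1}$ is forced to lie from $NAM$ when $\bar{k}'''\in M\cdot B_\delta$, and hence gives a lower bound of the form $|\,1+\mathrm{Re}(\alpha^\sharp)|\gtrsim d(g,MA)^2$ (or equivalently $|-s_1-s\,\mathrm{Re}(\alpha^\sharp)|\gtrsim s\cdot d(g,MA)^2\gtrsim s^{2\epsilon_0}\beta$). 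This follows from the fact that $Z_{G_0}(H)=MA$ and a Lie-algebraic expansion analogous to Lemma \ref{lem: uniformazation lemma}: the deviation $\mathrm{Re}(\alpha^\sharp)+1$ vanishes precisely when $g\in MAw_0NAM\cup\{g\in NAM:\kappa(g^{-1})\in w_0M\}$, and near such configurations the vanishing is second order in $d(g,MA)$. Given this lower bound, repeated integration by parts in $t_1$ yields $O_A(s^{-A})$ for the $t_1$-integral, and the near-region contribution is also $O(s^{-A})$.

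The main obstacle is the last quantitative step: verifying the lower bound $|\mathrm{Re}(\alpha^\sharp)+1|\gtrsim d(g,MA)^2$ uniformly as $\bar{k}'''$ varies in the $\delta$-neighborhood of $M$. In addition, one has to handle the boundary case $g\in NAM\setminus MA$ (where $\kappa(g^{-1})\in M$ and the above lower bound fails) by a symmetric argument using $\varphi_s(h)=\varphi_s(h^{-1})$, exploiting that in this case $g$ itself lies on the minimal parabolic and one may reduce to the pure $NA$ situation, for which Proposition \ref{prop: second one dimensional integral estimate} applies directly with $(z,\tau)=(z_g,\tau_g)$ satisfying $|z_g|^2+\tau_g^2\asymp d(g,MA)^2\ge B_0^2s^{-1+2\epsilon_0}\beta$.
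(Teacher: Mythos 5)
Your plan follows essentially the same route as the paper: Harish--Chandra expansion of $\varphi_s$, a $t_1$-dependent change of variable in $M\backslash K_0$ to split the phase via Lemma \ref{lem: spliting A}, a dichotomy on whether the new $K_0$-variable is far from or near $M$ at the scale $\delta \asymp s^{-1/2+\epsilon_0}\beta^{1/2}$, with Proposition \ref{prop: first one dimensional intgeral estimate} handling the far case and Proposition \ref{prop: second one dimensional integral estimate} handling the residual degenerate case. Your specific change of variable $\bar{k}'''=\kappa(\bar{k}a(-t_1)g)$ differs slightly from the paper's $\bar{u}=\kappa(\bar{k}a(-t_1))$, but it is equally legitimate by Lemma \ref{lem: change of variable}, and your far-region step (applying Proposition \ref{prop: first one dimensional intgeral estimate} to the $t_2$-integral with $k=\bar{k}'''$, $z=\tau=t=0$, $s'=s_2$) is correct.

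However, the near-region step has a genuine gap. First, a sign error: since $A(\bar{k}a(-t_1)g)=-A(\bar{k}'''g^{-1}a(t_1))$, the $t_1$-phase is $-s_1t_1+sA(\bar{k}'''g^{-1}a(t_1))$, with derivative $-s_1+s\,\mathrm{Re}(\alpha^\sharp)$ near $t_1=0$; the quantity that must be bounded away from zero is $|1-\mathrm{Re}(\alpha^\sharp)|$, not $|1+\mathrm{Re}(\alpha^\sharp)|$. Second, and more seriously, the ``crucial quantitative input'' $|1-\mathrm{Re}(\alpha^\sharp)|\gtrsim d(g,MA)^2$ is false on a thick set of $g$. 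If $\kappa(g)\in M$, so that $g\in NAM$ with a possibly large $N$-part, then for $\bar{k}'''\in M$ one has $\kappa(\bar{k}'''g^{-1})\in M$, hence $\alpha^\sharp=1$ and the phase derivative vanishes exactly even though $d(g,MA)$ may be as large as the hypotheses allow. You do flag this as a ``boundary case,'' but you describe the exceptional set as the surface $NAM\setminus MA$; in fact, because $\bar{k}'''$ ranges over a $\delta$-neighborhood of $M$, the exceptional configurations form a tube of comparable thickness around $NAM$, so it cannot be treated as a measure-zero boundary. The proposed ``symmetric argument using $\varphi_s(h)=\varphi_s(h^{-1})$'' cannot rescue this, since $d(g^{-1},MA)=d(g,MA)$ and the geometric obstruction is unchanged by inverting $g$.

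The correct repair (and what the paper does) is to split directly on the size of $d(\kappa(g),M)$. When $\kappa(g)$ is within $O(s^{-1/2+\epsilon_0}\beta^{1/2})$ of $M$, the hypothesis $d(g,MA)\geq B_0 s^{-1/2+\epsilon_0}\beta^{1/2}$ forces the $(z',\tau')$-component of the Iwasawa decomposition $g=k'n(z',\tau')a(t')$ to satisfy $|z'|^2+|\tau'|^2\gg s^{-1+2\epsilon_0}\beta$; one then returns to the original form \eqref{eq: integral J(s t1 t2 g)} of $J$ and applies Proposition \ref{prop: second one dimensional integral estimate} to the $t_2$-integral for each fixed $t_1$, after checking that the $N$-component of $a(-t_1)g$ retains the required lower bound uniformly in $|t_1|\le 1$. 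You correctly identify Proposition \ref{prop: second one dimensional integral estimate} as the final tool, so the overall shape of your argument is right, but the quantitative lemma you conjecture to bridge the near-region cases is not true as stated, and the case split must be reorganized around $d(\kappa(g),M)$ rather than around the pointwise size of $1-\mathrm{Re}(\alpha^\sharp)$.
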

\begin{proof}
    We define the map $u: K_0\times \BR\to K_0$ by requiring $ka(-t_1)\in NA u(k,t_1)$. By Lemma \ref{lem: change of variable}, for each fixed $t_1\in\BR$,  $u(\cdot,t_1)$ is a diffeomorphism from $K_0$ onto itself and induces a diffeomorphism from $M\backslash K_0$ onto itself. If we apply the functional equation $\varphi_s = \varphi_{-s}$, write $\varphi_{-s}$ as an integral over $K_0$ by \eqref{eq:HC K int formula}, and apply Lemma \ref{lem: spliting A}, we obtain
    \begin{align*}
        \varphi_{s}\left(  a(-t_1)ga(t_2)   \right)          =& \int_{M\backslash K_0} \exp\big( (1-is)   \left(A\left( u(\overline{k},t_1)ga(t_2) \right) - A\left( u(\overline{k},t_1)a(t_1)\right) \right)\big) d\overline{k}\\
        =&\int_{M\backslash K_0} \exp\big( (1-is)   \left(A\left( \overline{u}ga(t_2) \right) - A\left( \overline{u}a(t_1)\right) \right)\big) \left|\det \frac{\partial \overline{k}}{\partial \overline{u}}\right| d\overline{u}.
    \end{align*}
    Here we use $\partial \overline{u}/\partial\overline{k}$ to denote the Jacobian matrix of the diffeomorphism $u(\cdot,t_1):M\backslash K_0\to M\backslash K_0$, and $\partial \overline{k}/\partial \overline{u}$ is the inverse matrix. Substituting this into the definition \eqref{eq: integral J(s t1 t2 g)} of $J(s,s_1,s_2,g;\chi_0)$ gives
    \begin{align}\label{eq: expanding J}
    \begin{split}
         \iint_{-\infty}^\infty &\int_{M\backslash K_0}\chi_0(t_1) \chi_0(t_2) e^{-is_1 t_1 +is_2 t_2}\\
         &\exp\big( (1-is)   \left(A\left( \overline{u}ga(t_2) \right) - A\left( \overline{u}a(t_1)\right) \right)\big) \left|\det \frac{\partial \overline{k}}{\partial \overline{u}}\right| d\overline{u} dt_1 dt_2 . 
         \end{split}
    \end{align}
    Let $g = k^\prime n(z^\prime,\tau^\prime) a (t^\prime)$ with $k^\prime \in K_0$, $z^\prime\in\BC$ and $\tau^\prime, t^\prime\in\BR$. The condition $d(g,e)\leq C_0$ implies that $z^\prime$, $\tau$ and $t^\prime$ are bounded. Choose a constant $C>0$.  If $d(\overline u,M) \geq Cs^{-1/2 + \epsilon_0}\beta^{1/2}$, then integrating \eqref{eq: expanding J} in $t_1$ and applying Proposition \ref{prop: first one dimensional intgeral estimate} shows that the integral of \eqref{eq: expanding J} over $t_1$ and $t_2$ with this $\overline u$ is $O_A(s^{-A})$. And if $d(\overline u,M) < Cs^{-1/2 + \epsilon_0}\beta^{1/2}$ but $d(k^\prime,M) \geq C_1 C s^{-1/2 + \epsilon_0}\beta^{1/2}$ for some constant $C_1>0$ independent of the choice of $C$ such that $d(\overline uk^\prime,M) \geq Cs^{-1/2 + \epsilon_0}\beta^{1/2}$, then we obtain the same conclusion by integrating in $t_2$. Combining these, we see that \eqref{eq: expanding J} will be $O_A( s^{-A})$ unless $d(k^\prime,M) < C_1 C s^{-1/2 + \epsilon_0}\beta^{1/2}$, and we assume that this is the case.

    If $C$ is chosen sufficiently small, the condition $d(k^\prime,M) < C_1 C s^{-1/2 + \epsilon_0}\beta^{1/2}$ and our assumption that $d(g,MA)\geq B_0 s^{-1/2 + \epsilon_0}\beta^{1/2}$ imply that $|z^\prime|^2+|\tau^\prime|^2 \geq (C_2 s^{-1/2 + \epsilon_0}\beta^{1/2})^2 $ for some $C_2>0$ depending only on $B_0$. For $-1\leq t_1 \leq 1$, we define $z^\prime(t_1)\in\BC$ and $\tau^\prime(t_1), t^\prime(t_1) \in \BR$ by requiring
    \begin{align*}
        a(-t_1)g = a(-t_1)k^\prime n(z^\prime,\tau^\prime) a (t^\prime) \in  K_0n(z^\prime(t_1),\tau^\prime(t_1)) a(t^\prime(t_1)).
    \end{align*}
    It follows that if $C$ is sufficiently small, 
    we have  $|z^\prime(t_1)|^2+|\tau^\prime(t_1)|^2 \geq (C_3 s^{-1/2 + \epsilon_0}\beta^{1/2})^2 $ for some  $C_3>0$ and for all $-1\leq t_1 \leq 1$. The result now follows by applying Proposition \ref{prop: second one dimensional integral estimate} to the integral \eqref{eq: integral J(s t1 t2 g)} for each fixed $t_1$.
\end{proof}

\begin{proof}[Proof of \eqref{eq: bd for I(g)} in Proposition \ref{prop: bound for I}]
    We can unfold the integral $I(\lambda,\phi,g)$ by the Fourier inversion for $\phi$ and inverse Harish-Chandra transform for $h_\lambda$, that is, $I(\lambda,\phi,g)$ is equal to
    \begin{align*}
        \frac{1}{(2\pi)^2}\int_0^\infty\Big( &\iint_{\BC\times\BR\times\BR}  \overline{\sF_t\phi(z_1,\tau_1,s_1})\sF_t\phi(z_2,\tau_2,s_2)\\
        &J(s,s_1,s_2,n(z_1,\tau_1)^{-1}gn(z_2,\tau_2);\chi) dz_1 dz_2 d\tau_1 d\tau_2 ds_1 ds_2\Big)h_\lambda(s) d\nu(s).
    \end{align*}
    The integral with $s_1,s_2 \in [\lambda-\beta,\lambda+\beta]$, and $\chi\in C_c^\infty(\overline{\BD} \times \overline{\BD} \times \BR \times \BR)$ defined by
    \begin{align*}
        \chi(z_1,\tau_1,z_2,\tau_2,t_1,t_2) = \overline{b(z_1,\tau_1,t_1)}b(z_2,\tau_2,t_2)16e^{-2t_1 -2 t_2} .
    \end{align*}
    By the rapid decay of $h_\lambda$ away from $\lambda$, as $\beta\geq\lambda^{\epsilon'}$, we may also assume $s\in [\lambda-\beta,\lambda+\beta]$.
    From the facts that $|z_1|,|\tau_1|,|z_2|,|\tau_2| \leq \lambda^{-1/2}$ and $d(g,MA)\geq\lambda^{-1/2+\epsilon_0}\beta^{1/2}$, it may be seen that
    \begin{align*}
        d(n(z_1,\tau_1)^{-1}gn(z_2,\tau_2), MA) \gg \lambda^{-1/2 + \epsilon_0}\beta^{1/2}.
    \end{align*}
    The result now follows from Proposition \ref{prop: nonstationary oscillatory integral estimate}.
\end{proof}

\section{Bounds away from the spectrum}\label{sec: bd away from spec}

In this section, we prove the bound \eqref{eq: bd for I(e)} in Proposition \ref{prop: bound for I}. 
We first decompose $k_\lambda$ along the radial direction. Without loss of generality, we assume the support of $k_\lambda(a(t))$ is contained in $[-1,1]$. Let $b_0\in C_c^\infty(\BR)$ be an even cutoff function that is equal to $1$ on $[-1,1]$ and zero outside $[-2,2]$. Moreover, we suppose $b_0$ is even and $0\leq b_0\leq 1$. Then $k_\lambda(a(t)) = b_0(t) k_\lambda(a(t))$ for any $t\in\BR$. Let $\epsilon_0 > 0$. Define $b_1 \in C_c^\infty(\BR)$ to be the function
\[
b_1(x) = b_0 (\beta^{1/2-\epsilon}x)
\]
so that $b_1$ satisfies:
\begin{itemize}
    \item $0\leq b_1 \leq 1$ and $b_1$ is even,
    \item $b_1(t) = 1$ for $|t|\leq \beta^{-1/2 + \epsilon_0}$,
    \item $b_1(t) = 0$ for $|t| \geq 2\beta^{-1/2 + \epsilon_0}$.
\end{itemize}
Let $b_2 = b_0 - b_1\in C_c^{\infty}(\BR)$.
Since $b_i$ are constant in some neighborhood of $0$, we can extend the domains of $b_i$ to $\BH^2_\BC$ by the polar coordinate map, i.e.,
\begin{align}\label{eq:def of bi}
    b_i(n(z,\tau)a(t)):=b_i(t_0),\text{ where }t_0\text{ satisfies the relation }n(z,\tau)a(t)\in K_0a(t_0)K_0,
\end{align}
so that $b_i \in C_c^\infty(\BH^2_\BC)$ are $K_0$-bi-invariant, and the restrictions $b_i\circ a(t) $ are the original functions, $i=0,1,2$. Let $k_1, k_2\in C_c^\infty(\BH^2_\BC)$ be $K_0$-bi-invariant functions so that $k_i = b_i k_\lambda$. Then we decompose the integral $I(\lambda,\phi,e) = I_1(\phi) + I_2(\phi)$ where
\begin{align*}
    I_i(\phi)=\iint_{\BC\times\BR\times \BR} \overline{b\phi(z_1,\tau_1,t_1})b\phi(z_2,\tau_2,t_2) &k_i ((n(z_1,\tau_1)a(t_1))^{-1}n(z_2,\tau_2)a(t_2)) \\
    &16e^{-2t_1 -2 t_2} dz_1d\tau_1dt_1 dz_2d\tau_2dt_2,\qquad i=1,2.
\end{align*}
By the choice of the width of $\supp(b_1)$,
we will show that for $\phi \in \cS(\overline{\BD}\times\BR) $  satisfying $\|\phi \|_2 = 1$ and $\supp(\sF_t\phi) \subset \overline{\BD}\times (\BR\backslash \pm[\lambda-\beta,    \lambda+\beta])$,
\begin{align}
   & I_1(\phi) \ll \beta^{-1/2 + \epsilon_0} ,\label{eq: bound of I1}\\
    &I_2(\phi)\ll_{\epsilon_0,A}  \lambda^{-A} .\label{eq: bound of I2}
\end{align}
Hence, the bound \eqref{eq: bd for I(e)} in Proposition \ref{prop: bound for I} will follow from \eqref{eq: bound of I1} and \eqref{eq: bound of I2}. We will prove \eqref{eq: bound of I1} and \eqref{eq: bound of I2} in the rest of the paper.

\subsection{Bound for $I_1(\phi)$}
We define $\Phi \in C_c^\infty(\BH^2_\BC)$ by $\Phi(n(z,\tau)a(t)\cdot o):= b\phi(z,\tau,t)$. It may be seen that $I_1(\phi) = \langle \Phi \times k_1, \Phi \rangle_{\BH^2_\BC}$. Here $\langle \cdot, \cdot  \rangle_{\BH^2_\BC}$ is the inner product on $L^2(\BH^2_\BC)$ with respect to the volume form. By the Plancherel formula for the Helgason transform \eqref{eq: PL formula}, 
\begin{align*}
    I_1 (\phi) = \int_0^\infty\int_{M\backslash K_0} \widehat{\Phi}(s,\overline{k})\widehat{k_1}(s)\overline{\widehat{\Phi} (s,\overline{k})} d\nu(s) d\overline{k}\ll \| \widehat{k_1} \|_\infty \| \Phi\|_2^2 \ll \| \widehat{k_1} \|_\infty.
\end{align*}
We will prove the following pointwise bound for $\widehat{k_1}$, and the bound \eqref{eq: bound of I1} will follow from this immediately.
\begin{proposition}
\begin{itemize}
    \item If $|s|\leq \lambda/2$, then $\widehat{k_1}(s) \ll_A \lambda^{-A}$.
    \item If $|s|\geq \lambda/2$, then $\widehat{k_1}(s) \ll \beta^{-1/2 + \epsilon_0}$.
\end{itemize}
\end{proposition}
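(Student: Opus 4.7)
The strategy is to reduce $\widehat{k_1}(s)$ to a one-dimensional radial integral using the $K_0$-bi-invariance of $k_1 = b_1 k_\lambda$, and then dichotomize based on the size of $|s|$ relative to $\lambda$. In polar coordinates on $\BH^2_\BC$,
\[
\widehat{k_1}(s) = c\int_0^\infty k_\lambda(a(r))\, b_1(r)\, \varphi_{-s}(a(r))\, J(r)\, dr,
\]
where $J(r)\asymp r^3$ near $r=0$ is the polar Jacobian and $b_1$ is supported in $[0, 2\beta^{-1/2+\epsilon_0}]$.

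For the first bound ($|s|\leq \lambda/2$), we apply the Plancherel formula for the Helgason transform to write
\[
\widehat{k_1}(s) = \int h_\lambda(s')\, \widehat{b_1\varphi_{-s}}(s')\, d\nu(s').
\]
Since $h_\lambda$ has support essentially in $\pm[\lambda - C, \lambda + C]$ with total mass $\ll\lambda^3$, it suffices to prove $\widehat{b_1\varphi_{-s}}(s')\ll\lambda^{-A}$ for $|s'|\asymp\lambda$ and $|s|\leq\lambda/2$. Expanding both spherical functions via \eqref{eq:HC K int formula} recasts this as a multiple integral over $K_0\times K_0\times [0,\infty)$ with phase $is' A(ka(r)) - is A(k' a(r))$. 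By \eqref{eq: explicit formula for par A/par t} and the Iwasawa-coordinate calculations of Section \ref{sec:der of A}, the $r$-derivative of this phase has magnitude $\gtrsim\lambda$ off a degenerate locus where $k$ or $k'$ approaches $M$. Following the template of Propositions \ref{prop: first one dimensional intgeral estimate} and \ref{prop: second one dimensional integral estimate}, we dyadically partition $M\backslash K_0$ and integrate by parts in $r$ on the non-stationary piece: each step gains $\beta^{1/2-\epsilon_0}/\lambda\leq\lambda^{-\epsilon'/2}$ from the scale of derivatives of $b_1$, while the degenerate region contributes negligibly by a direct volume bound.

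For the second bound ($|s|\geq \lambda/2$), we apply Cauchy--Schwarz on the ball $B_r$ of radius $r = 2\beta^{-1/2+\epsilon_0}$ containing the support of $b_1$:
\[
|\widehat{k_1}(s)|^2 \leq \|k_\lambda\, \mathbf{1}_{B_r}\|_{L^2}^2\cdot \|\varphi_{-s}\, \mathbf{1}_{B_r}\|_{L^2}^2.
\]
For the spherical-function factor we combine the trivial bound $|\varphi_{-s}|\leq 1$ on the region $\rho\lesssim 1/s$ with the Harish-Chandra asymptotic $|\varphi_{-s}(a(\rho))|\ll s^{-3/2}$ for $\rho\gtrsim 1/s$ (using $|c(s)|^2\asymp s^{-3}$, consistent with $d\nu/ds\asymp s^3$ recorded in Section \ref{sec: notation}); this yields $\|\varphi_{-s}\mathbf{1}_{B_r}\|_{L^2}^2\ll s^{-4}+s^{-3}r^4$. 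A parallel computation via the Harish-Chandra inversion of $k_\lambda$ gives $\|k_\lambda\mathbf{1}_{B_r}\|_{L^2}^2\ll \lambda^2+\lambda^3 r^4$. Multiplying and using $s\asymp\lambda$, we obtain $|\widehat{k_1}(s)|\ll\max(\lambda^{-1}, r^4, \lambda^{-1/2}r^2)$, each of which is bounded by $\beta^{-1/2+\epsilon_0}$ under the standing assumptions $\beta\leq\lambda^{1-\epsilon'}$ and $\epsilon_0<1/2$.

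The main obstacle is the phase-stationarity control in the first bound: one must carefully handle the small-$r$ regime, where neither spherical function oscillates and the dominant frequency content comes solely from $k_\lambda$, as well as the degenerate locus in $K_0\times K_0$ where $\partial_r A(k a(r))$ or $\partial_r A(k' a(r))$ approaches $\pm 1$ and causes near-cancellation of the combined phase derivative. This is handled by combining a separate small-$r$ analysis (using the Taylor expansion of $\varphi_{\pm s}(a(r))$ near $r = 0$) with the dyadic partition of $M\backslash K_0$ developed in Section \ref{sec: bd near the spectrum}, exploiting Lemmas \ref{lem: uniformazation lemma} and \ref{lem: uniform boundedness} for quantitative control over $\partial_r A(k a(r))$.
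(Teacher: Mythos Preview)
Your treatment of the second bound ($|s|\geq\lambda/2$) via Cauchy--Schwarz is unnecessarily indirect, and the intermediate claim $|\varphi_{-s}(a(\rho))|\ll s^{-3/2}$ for $\rho\gtrsim 1/s$ is not what the Harish-Chandra expansion gives here (that expansion is an asymptotic as $\rho\to\infty$, not a uniform bound on $[1/s,\,2\beta^{-1/2+\epsilon_0}]$). The paper simply inserts the pointwise bounds $k_\lambda(a(t))\ll\lambda^3(1+\lambda t)^{-3/2}$ and $\varphi_{-s}(a(t))\ll(1+|st|)^{-3/2}$ (both from \cite{marshall2016p}) directly into the polar integral and reads off $\widehat{k_1}(s)\ll\int_0^{2\beta^{-1/2+\epsilon_0}}\lambda^{3/2}t^{-3/2}\cdot(\lambda t)^{-3/2}\cdot t^3\,dt\ll\beta^{-1/2+\epsilon_0}$ in one line. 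Your Cauchy--Schwarz can be repaired with these same inputs, but there is no need for it.

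The genuine gap is in the first bound ($|s|\leq\lambda/2$). After collapsing to the radial integral and expanding both spherical functions over $K_0$, you claim the $r$-derivative of the phase $s'A(ka(r))-sA(k'a(r))$ has magnitude $\gtrsim\lambda$ ``off a degenerate locus where $k$ or $k'$ approaches $M$.'' This is incorrect: by \eqref{eq: explicit formula for par A/par t} the $r$-derivative at $r=0$ is $s'\,\mathrm{Re}(\alpha_k)-s\,\mathrm{Re}(\alpha_{k'})$, which vanishes on a codimension-one subset of $K_0\times K_0$ that is \emph{far} from $M\times M$ (e.g.\ take $k'=e$ and any $k$ with $\mathrm{Re}(\alpha_k)=s/s'$; near $M\times M$ the derivative is in fact $\approx s'-s\gtrsim\lambda/2$, so that region is harmless). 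A dyadic partition around $M$ does not touch this stationary set, and the template of Propositions \ref{prop: first one dimensional intgeral estimate}--\ref{prop: second one dimensional integral estimate} does not transfer, since there the two spectral parameters are close rather than separated by $\gtrsim\lambda/2$.

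The paper avoids this obstruction by \emph{not} collapsing to the radial variable. After expanding $k_\lambda$ spectrally and using the $K_0$-invariance of $b_1$ and $\varphi_r$ to eliminate one $K_0$-integral, the inner integral is kept over all of $\BH^2_\BC$ and written in Iwasawa coordinates $(x,y,\tau,t)$, with phase $rA(kn(x+iy,\tau)a(t))-st$. The four partial derivatives \eqref{eq: explicit formula for par A/par t}--\eqref{eq: explicit formula for par A/par tau} then read off $\mathrm{Re}(\alpha)$, $\mathrm{Re}(\beta)$, $\mathrm{Im}(\beta)$, $\mathrm{Im}(\alpha)$ respectively, so simultaneous vanishing would force $\alpha=\rho:=s/r$ real and $\beta=0$, contradicting $|\alpha|^2+|\beta|^2=1$ since $|\rho|\leq 2/3$. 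The three transverse directions $(x,y,\tau)$ are precisely what rescues the nonstationary-phase argument; your one-dimensional radial setup discards them.
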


We first prove the case $|s| \geq \lambda/2$. We write the integral via the polar coordinate (see e.g. \cite[Ch. I, Theorem 5.8]{helgason1984groups}). There exists a constant $c > 0$ so that
\begin{align*}
    \widehat{k_1}(s)  &= \int_{\BH_\BC^2}  b_1k_\lambda(x) \varphi_{-s}(x) d\mathrm{Vol}(x)\\
    &= c \int_0^\infty b_1(t) k_\lambda(a(t)) \varphi_{-s}(a(t)) \sinh^2(t/2)\sinh(t) dt.
\end{align*}
The bounds $k_\lambda(a(t)) \ll \lambda^{3} (1+\lambda|t|)^{-3/2}$ from \cite[Lemma 2.8]{marshall2016p}, and $\varphi_{-s}(a(t)) \ll (1 + |st|)^{-3/2}$ from \cite[Theorem 1.3]{marshall2016p} with $|s| \geq \lambda/2$ imply that
\begin{align*}
    \widehat{k_1}(s) \ll \int_0^\infty b_1(t) (\lambda^{3/2} t^{-3/2})(\lambda t)^{-3/2} \sinh^2(t/2)\sinh(t) dt \ll \beta^{-1/2 + \epsilon_0}.
\end{align*}

Now we consider the case $|s| \leq \lambda/2$. We apply the inverse Harish-Chandra transform to $k_\lambda$, and apply \eqref{eq:HC K int formula}. We obtain
\begin{align*}
    \widehat{k_1}(s) &= \int_0^\infty \left(\int_{\BH^2_\BC}  b_1(x) \varphi_r(x) \varphi_{-s}(x) d\mathrm{Vol}(x) \right)h_\lambda(r)  d\nu(r) \\
    &=\int_0^\infty \left(\int_{\BH^2_\BC} \int_{K_0} b_1(x) \varphi_r(x) \exp((1-is)A(kx)) dk d\mathrm{Vol}(x)\right)h_\lambda(r)  d\nu(r)\\
    &=\int_0^\infty \left(\int_{\BH^2_\BC}b_1(x) \varphi_r(x) \exp((1-is)A(x))d\mathrm{Vol}(x) \right)h_\lambda(r)  d\nu(r) \\
    &= \int_0^\infty \int_{K_0} \left(\int_{\BH^2_\BC} b_1(x) \exp((1+ir)A(kx)) \exp((1-is)A(x)) d\mathrm{Vol}(x) \right)h_\lambda(r) dk d\nu(r).
\end{align*}
It suffices to show for $|r-\lambda|\le\beta/4$,
\begin{align*}
    \int_{\BH^2_\BC} b_1(x) \exp((1+ir)A(kx)) \exp((1-is)A(x)) d\mathrm{Vol}(x)\ll_A r^{-A}.
\end{align*}
Combining all amplitude factors and applying the Iwasawa coordinates \eqref{eq: Iwa coord action} for $\BH^2_\BC$, it suffices to show
\begin{align}\label{eq: bd for J(k)}
    J(k) :=\int_{\BR^4} \chi_1(x,y,\tau,t) \exp(ir(A(kn(x+iy,\tau)a(t))-(s/r)t)) dx dy d\tau dt \ll_A r^{-A}.
\end{align}
Here $\chi_1(z,y,\tau,t) \in C_c^\infty(\BR^4)$ is a cutoff function at scale $\beta^{-1/2 + \epsilon_0}$, and the $k$-variable is ignored for notation simplicity. We denote by $\rho = s/r$, which satisfies $|\rho| \leq 2/3$.  The phase function with parameters $k\in K_0$ and $\rho$ is
\begin{align*}
    \phi(x,y,\tau,t;k,\rho) = A(kn(x+iy,\tau)a(t))-\rho t.
\end{align*}

\begin{lemma}
    If $\chi \in C_c^\infty(\BR^4)$ is a cutoff function at scale $\beta^{-1/2 + \epsilon_0}$, and $\phi \in C^\infty(\BR^4)$ is real-valued and satisfies
    \begin{align*}
        |\nabla\phi(x)| \geq c > 0, \;\; \phi^{(n)}(x)\ll_n 1
    \end{align*}
    for some fixed constant $c>0$ and for any $x\in \supp(\chi)$ and $n\geq 1$, then
    \begin{align*}
        \int  \chi(x) e^{ir\phi(x)} dx \ll_A r^{-A}.
    \end{align*}
\end{lemma}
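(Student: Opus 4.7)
The plan is to reduce to a standard unit-scale non-stationary phase estimate by rescaling. Setting $\delta = \beta^{-1/2+\epsilon_0}$, the assumption that $\chi$ is a cutoff at scale $\delta$ means $\supp(\chi)$ is contained in a box of sidelength $O(\delta)$ and $|\partial^\alpha \chi|\ll_\alpha \delta^{-|\alpha|}$. The change of variables $x = \delta y$ then gives
\begin{align*}
    \int_{\BR^4}\chi(x)e^{ir\phi(x)}\,dx = \delta^4\int_{\BR^4}\tilde\chi(y)e^{i(r\delta)\psi_1(y)}\,dy,
\end{align*}
where $\tilde\chi(y):=\chi(\delta y)$ is a unit-scale cutoff with derivatives bounded uniformly in $\delta$, and $\psi_1(y):=\delta^{-1}\phi(\delta y)$. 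The chain rule yields $\nabla_y\psi_1(y) = \nabla\phi(\delta y)$, so $|\nabla_y\psi_1|\geq c$ on $\supp(\tilde\chi)$; moreover $\partial_y^\alpha\psi_1 = \delta^{|\alpha|-1}\partial^\alpha\phi(\delta y) \ll_\alpha \delta^{|\alpha|-1}$, which is $O(1)$ for every $|\alpha|\geq 1$ once $\delta\leq 1$ (automatic for $\lambda$, hence $\beta$, large).

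I would then invoke the standard non-stationary phase lemma on the rescaled integral: since $\tilde\chi$ and all derivatives of $\psi_1$ of order $\geq 1$ are $O(1)$ and $|\nabla\psi_1|\geq c>0$, iterated integration by parts with the operator $L = (ir\delta\,|\nabla\psi_1|^2)^{-1}\nabla\psi_1\cdot\nabla$ (which satisfies $L e^{i(r\delta)\psi_1} = e^{i(r\delta)\psi_1}$) yields
\begin{align*}
    \int\tilde\chi(y)e^{i(r\delta)\psi_1(y)}\,dy \ll_N (r\delta)^{-N}
\end{align*}
for every integer $N\geq 0$. Combined with the Jacobian factor $\delta^4\leq 1$, this gives the bound $\bigl|\int\chi e^{ir\phi}\,dx\bigr| \ll_N (r\delta)^{-N}$.

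Finally I would verify that $r\delta$ is a positive power of $r$ in the range where the lemma is applied, so taking $N$ large enough converts this into $O_A(r^{-A})$. In the application to $I_1(\phi)$ one has $r\asymp\lambda$ (via the restriction to $|r-\lambda|\leq\beta/4$ coming from the rapid decay of $h_\lambda$) together with $\beta\leq 3\lambda^{1-\epsilon'}$, so
\begin{align*}
    r\delta \gg \lambda\cdot\lambda^{(1-\epsilon')(-1/2+\epsilon_0)} \gg \lambda^{1/2+c} \gg r^{1/2+c}
\end{align*}
for some $c=c(\epsilon_0,\epsilon')>0$, and hence $(r\delta)^{-N}\ll r^{-A}$ for any $N\geq 2A$. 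The argument is essentially routine bookkeeping of scales; the only substantive point is that absorbing a single power of $\delta$ into the phase rescales the gradient of $\psi_1$ to be uniformly bounded away from zero while keeping all higher derivatives $O(1)$, which is precisely what makes the unit-scale non-stationary phase lemma applicable.
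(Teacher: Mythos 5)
Your proof is correct and follows essentially the same route as the paper: rescale $x=\delta y$ with $\delta=\beta^{-1/2+\epsilon_0}$, absorb one power of $\delta$ into the phase so the rescaled phase $\psi_1(y)=\delta^{-1}\phi(\delta y)$ has $|\nabla\psi_1|\geq c$ with all higher derivatives $O(1)$, and apply unit-scale non-stationary phase. You additionally spell out the step the paper leaves implicit, namely that $(r\delta)^{-N}$ only converts to $r^{-A}$ because, in the application, $r\asymp\lambda$ and $\lambda^{\epsilon'}\leq\beta\leq\lambda^{1-\epsilon'}$ force $r\delta$ to be a positive power of $r$; this is a genuine hidden hypothesis of the lemma as stated, and making it explicit is a small but real improvement in transparency over the paper's one-line ``the lemma follows by integration by parts.''
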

\begin{proof}
    We have
    \begin{align*}
        \int \chi(x) e^{ir\phi(x)} dx = \beta^{-2 + 4\epsilon_0} \int \chi(\beta^{-1/2 + \epsilon_0} x) e^{i(r\beta^{-1/2 + \epsilon_0})(\beta^{1/2-\epsilon_0}\phi(\beta^{-1/2 + \epsilon_0} x))} dx.
    \end{align*}
    The cutoff function $\chi(\beta^{-1/2 + \epsilon_0} x)$ is now  at scale $1$.
    The lemma follows by integration by parts.
\end{proof}

Hence, to apply the above lemma to $J(k)$, it suffices to give a nonzero lower bound for the gradient 
\(
   \nabla \phi = (\partial_x\phi,\partial_y\phi,\partial_\tau\phi,\partial_t\phi) 
\)
when $(x,y,\tau,t)\in \supp(\chi_1)$. By the compactness of $\supp(\chi_1)$ and $K_0$, it suffices to show the gradient is nonvanishing. Let $\alpha = \alpha(x,y,\tau,t;k) $ and $\beta = \beta(x,y,\tau,t;k) $ be complex-valued functions satisfying $|\alpha|^2+|\beta|^2=1$ such that
\begin{align*}
    kn(x+iy,\tau)a(t)\in NAMk(\alpha,\beta).
\end{align*}
Here $k(\alpha,\beta)\in K_0$ is the same as in \eqref{eq: isom from SU(2) to K0/M}. We suppose that $\nabla\phi=0$. Then by applying \eqref{eq: explicit formula for par A/par t}--\eqref{eq: explicit formula for par A/par tau}, we see that $\mathrm{Re}(\alpha)=\rho$ and $\mathrm{Im}(\alpha)=\mathrm{Re}(\beta)=\mathrm{Im}(\beta)=0$, which contradicts to $|\alpha|^2+|\beta|^2=1$.

\subsection{Bound for $I_2(\phi)$}

By applying the inverse Harish-Chandra transform to $h_\lambda$, it suffices to prove the bound
\begin{align}\label{eq:equa bd for J2}
    J_2(\phi,s)\ll_{A,\epsilon_0} s^{-A}
\end{align}
for $|s-\lambda|\le\beta/2$, where
\begin{align*}
    J_2(\phi,s)=\iint_{\BC\times\BR\times\BR} \overline{b\phi(z_1,\tau_1,t_1)}b\phi(z_2,\tau_2,t_2)(b_2\varphi_s)\left((n(z_1,\tau_1)a(t_1))^{-1}n(z_2,\tau_2)a(t_2)\right)\\
    (16e^{-2t_1 -2 t_2})  dz_1 dz_2 d\tau_1d\tau_2 dt_1 dt_2.
\end{align*}
Note that if we let $z=e^{-t_1/2}(z_2-z_1)$ and $\tau=e^{-t_1}(\tau_2-\tau_1+2\Im(z_1\overline{z_2}))$ then
\begin{align}\label{eq: defn for z and tau}
    (n(z_1,\tau_1)a(t_1))^{-1}n(z_2,\tau_2)a(t_2) = n(z,\tau) a(t_2-t_1).
\end{align}
Therefore
\begin{align*}
    J_2(\phi,s)=\iint_{\BC\times\BR\times\BR} \overline{b\phi(z_1,\tau_1,t_1)}b\phi(z_2,\tau_2,t_2)(b_2\varphi_s)\left(\cA(z,\tau,t_2-t_1)\right)(16e^{-2t_1 -2 t_2})  dz_1 dz_2 d\tau_1d\tau_2 dt_1 dt_2.
\end{align*}
Here $\cA$ is the distance function satisfying \eqref{eq: dist function}.

\subsubsection{Derivatives of the distance function $\cA$}\label{subsubsection: Derivatives of the distance function}

\begin{lemma}\label{KN lower bound for t from lower bound for A(z,t)}
    Suppose $|z|\ll \lambda^{-1/2}$, $|\tau|\ll \lambda^{-1/2}$, and $\cA(z,\tau,t)\gg \beta^{-1/2 + \epsilon_0}$.
    Then we have $|t|\gg \beta^{-1/2 + \epsilon_0}$.
\end{lemma}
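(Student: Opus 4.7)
The plan is to argue by contrapositive using the explicit distance formula \eqref{eq: dist function}, namely
\[
\cosh(\cA(z,\tau,t))=\cosh(t) + \frac{1}{2}e^{-t}(|z|^4+2|z|^2+\tau^2)+|z|^2,
\]
together with the elementary inequality $\cosh(x)-1 = 2\sinh^2(x/2)\geq x^2/2$.

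So I would fix constants $c_1,c_2>0$ so that the hypotheses read $\cA(z,\tau,t)\geq c_1\beta^{-1/2+\epsilon_0}$ and $|z|,|\tau|\leq c_2\lambda^{-1/2}$, and then choose a small constant $c_3>0$ (to be picked at the end) and suppose for contradiction that $|t|<c_3\beta^{-1/2+\epsilon_0}$. Since $\beta^{-1/2+\epsilon_0}$ tends to $0$ as $\lambda\to\infty$, I may assume $|t|\leq 1$, and then $\cosh(t)-1\leq t^2\cosh(1)\ll t^2\leq c_3^2\beta^{-1+2\epsilon_0}$; the remaining terms on the right-hand side of the distance formula are bounded by $C\lambda^{-1}$ for an absolute constant $C$, using $|z|^2,|z|^4,\tau^2\ll\lambda^{-1}$ and $e^{-t}$ bounded.

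The key comparison is then $\lambda^{-1}\ll\beta^{-1+2\epsilon_0}$, which holds because the setup forces $\beta\leq\lambda^{1-\epsilon'}$ with $\epsilon'>0$ and $\epsilon_0$ small, so that
\[
\frac{\lambda^{-1}}{\beta^{-1+2\epsilon_0}}\leq \lambda^{(1-\epsilon')(1-2\epsilon_0)-1} = \lambda^{-\epsilon'(1-2\epsilon_0)-2\epsilon_0}
\]
tends to $0$ as $\lambda\to\infty$. Combining all these bounds yields $\cosh(\cA)-1\ll c_3^2\beta^{-1+2\epsilon_0}$ for $\lambda$ large, and applying $\cA^2/2\leq \cosh(\cA)-1$ gives $\cA\ll c_3\beta^{-1/2+\epsilon_0}$. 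Choosing $c_3$ sufficiently small compared to $c_1$ contradicts the hypothesis $\cA\geq c_1\beta^{-1/2+\epsilon_0}$, which completes the proof.

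There is no real obstacle here; the lemma is essentially a direct computation from \eqref{eq: dist function}. The only point to be careful about is that the cross-term $|z|^2$ (which does not come with an $e^{-t}$ factor) is still harmless because $|z|^2\ll\lambda^{-1}$, and that the comparison $\lambda^{-1}\ll\beta^{-1+2\epsilon_0}$ genuinely uses the upper bound $\beta\leq\lambda^{1-\epsilon'}$ from the standing assumption on $\beta$ in the amplification setup.
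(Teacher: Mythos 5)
Your proof is correct and is essentially the same approach as the paper, which simply states that the lemma ``follows directly from the relation \eqref{eq: dist function}''; you have supplied the routine details (the comparison $\cosh(\cA)-1\geq\cA^2/2$, the Taylor bound $\cosh(t)-1\ll t^2$ for bounded $t$, and the key observation that $\lambda^{-1}\ll\beta^{-1+2\epsilon_0}$ under the standing hypothesis $\lambda^{\epsilon'}\leq\beta\leq\lambda^{1-\epsilon'}$), all of which are accurate.
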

\begin{proof}
    This follows directly from the relation \eqref{eq: dist function}.
\end{proof}

\begin{proposition}
     Suppose $|z|\ll \lambda^{-1/2}$, $|\tau|\ll \lambda^{-1/2}$, $t\ll1$, and $\cA(z,\tau,t)\gg \beta^{-1/2 + \epsilon_0}$. Then we have
     \begin{align}
         &\frac{\partial}{\partial t}\cA(z,\tau,t)=\sgn t+O(\lambda^{-1}\beta^{1-2\epsilon_0}) , \label{KN approximate for derivative of mathcal A}\\
       & \frac{\partial^n}{\partial t^n} \cA(z,\tau,t) \ll_n \lambda^{-1}(\beta^{1/2-\epsilon_0})^{n+1},\qquad\text{ for any integer }n\geq2.\label{KN approximate for higher derivative of mathcal A}
     \end{align}
\end{proposition}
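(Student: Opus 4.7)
Both estimates rest on the explicit distance formula
\[
\cosh(\cA(z,\tau,t)) \;=\; \cosh(t) + R(z,\tau,t), \qquad R := \tfrac{1}{2}e^{-t}(|z|^4+2|z|^2+\tau^2) + |z|^2,
\]
already established in \eqref{eq: dist function}. Under the hypotheses $|z|,|\tau|\ll\lambda^{-1/2}$ and $|t|\ll 1$, the function $R$ and each of its $t$-derivatives is uniformly $O(\lambda^{-1})$. The preceding lemma gives $|t|\gg\beta^{-1/2+\epsilon_0}$, and writing
\[
\sinh^{2}(\cA) - \sinh^{2}(t) \;=\; (\cosh\cA-\cosh t)(\cosh\cA+\cosh t) \;=\; O(\lambda^{-1}),
\]
together with $\sinh^{2}(t)\gg\beta^{-1+2\epsilon_0}\gg\lambda^{-1}$ (here we use $\beta\leq\lambda^{1-\epsilon'}$), yields $\sinh(\cA)\asymp|\sinh t|\asymp|t|\gg\beta^{-1/2+\epsilon_0}$. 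In particular, $\sinh(\cA)^{-1}=O(\beta^{1/2-\epsilon_0})$, which is the price paid by each application of the chain rule.

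I would first prove \eqref{KN approximate for derivative of mathcal A} by a direct computation. Differentiating $\cosh(\cA)=\cosh(t)+R$ gives the basic identity
\[
\sinh(\cA)\,\cA_t \;=\; \sinh(t)+R_t,
\]
and squaring together with $\sinh^{2}(\cA)=\sinh^{2}(t)+O(\lambda^{-1})$ produces
\[
\cA_t^{2} \;=\; \frac{(\sinh t+R_t)^{2}}{\sinh^{2}(t)+O(\lambda^{-1})} \;=\; 1 + O(\lambda^{-1}\beta^{1-2\epsilon_0}).
\]
Since $\sinh(\cA)>0$ and $\sinh(t)+R_t$ inherits the sign of $\sinh t$, taking the appropriate square root gives $\cA_t=\sgn(t)+O(\lambda^{-1}\beta^{1-2\epsilon_0})$.

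For \eqref{KN approximate for higher derivative of mathcal A} I would induct on $n\geq 2$, differentiating the basic identity $\sinh(\cA)\cA_t=\sinh(t)+R_t$ a further $n-1$ times. The Leibniz rule combined with Fa\`a di Bruno produces a recursion of the shape
\[
\sinh(\cA)\,\cA^{(n)} \;=\; \bigl(\sinh(t)+R_t\bigr)^{(n-1)} \;-\; Q_n\!\bigl(\cA_t,\ldots,\cA^{(n-1)};\,\cosh\cA,\sinh\cA\bigr),
\]
where $Q_n$ is a universal polynomial obtained by collecting the chain-rule expansion of $\cosh(\cA)^{(n)}$ and extracting its top-order contribution. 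The decisive observation is that when $R\equiv 0$ the formula $\cA\equiv|t|$ forces every $\cA^{(k)}$ with $k\geq 2$ to vanish identically, so the entire right-hand side is zero under the substitution $\cA_t=\sgn(t)$, $\cA^{(k)}=0$ for $2\leq k\leq n-1$, $\cosh\cA=\cosh t$, $\sinh\cA=|\sinh t|$, $R^{(k)}=0$. Expanding around this trivial configuration, each $R^{(k)}$-type perturbation contributes $O(\lambda^{-1})$ and each $\cA^{(k)}$-type perturbation is controlled by the inductive hypothesis, giving a right-hand side of size $O(\lambda^{-1}\beta^{n(1/2-\epsilon_0)})$. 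Dividing by $\sinh(\cA)\gg\beta^{-1/2+\epsilon_0}$ yields the claimed bound $\cA^{(n)}\ll_n\lambda^{-1}\beta^{(n+1)(1/2-\epsilon_0)}$.

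The main obstacle is the bookkeeping that makes the cancellation at $R\equiv 0$ quantitative at every order. The tidiest device is to introduce $\tilde\cA := \cA-|t|$, which satisfies the implicit equation $\Phi(\tilde\cA,t)=R$ with $\Phi(x,t):=\cosh(t)(\cosh x-1)+|\sinh t|\sinh x$. Since $\Phi_x\big|_{x=\tilde\cA}=\sinh(\cA)$ is nonvanishing and $\Phi(0,t)\equiv 0$, iterating the implicit function theorem expresses every derivative $\tilde\cA^{(n)}$ as a smooth function of $R,R_t,\ldots,R^{(n)}$ and of $t$, with dependence on the $R^{(k)}$ that is at least linear (producing the factor $\lambda^{-1}$ uniformly in $n$) and with the $t$-dependence entering only through powers of $\sinh(\cA)^{-1}=O(\beta^{1/2-\epsilon_0})$ up to order $n+1$. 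This immediately delivers the stated estimate for $\cA^{(n)}=\tilde\cA^{(n)}$ when $n\geq 2$.
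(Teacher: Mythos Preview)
Your treatment of the first derivative \eqref{KN approximate for derivative of mathcal A} coincides with the paper's, up to cosmetic differences: you square the identity $\sinh(\cA)\cA_t=\sinh t+R_t$ and take a root, while the paper expands $\sinh(\cA)=\sqrt{\sinh^2 t+O(\lambda^{-1})}$ directly and divides.

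For \eqref{KN approximate for higher derivative of mathcal A} your route is genuinely different. The paper does not induct at all. Instead it notes that the explicit expression
\[
\partial_t\cA=\frac{\sinh t-\tfrac12 e^{-t}(|z|^4+2|z|^2+\tau^2)}{\sqrt{(\cosh t+R)^2-1}}
\]
extends holomorphically in $t$ to a complex disk of radius $C\beta^{-1/2+\epsilon_0}$ about the real point (the real part under the square root stays $\gg\beta^{-1+2\epsilon_0}$ there), and that the bound $\partial_w\cA-\sgn t=O(\lambda^{-1}\beta^{1-2\epsilon_0})$ persists on that disk. A single application of Cauchy's integral formula to $\partial_w\cA-\sgn t$ then delivers \eqref{KN approximate for higher derivative of mathcal A} for every $n\ge2$ at once, with no combinatorics.

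Your inductive scheme via $\tilde\cA=\cA-|t|$ and $\Phi(\tilde\cA,t)=R$ is correct, but the final paragraph is too compressed to stand as a proof. The sentence ``the $t$-dependence entering only through powers of $\sinh(\cA)^{-1}$ up to order $n+1$'' is precisely the quantitative statement to be established, and ``iterating the implicit function theorem'' does not by itself explain why the exponent is $n+1$. What actually closes the induction is the multivariate Fa\`a di Bruno expansion of $\partial_t^n[\Phi(\tilde\cA(t),t)]=R^{(n)}$: a term with $a$ $x$-derivatives and $b$ $t$-derivatives on $\Phi$ (so $b$ singleton blocks) carries a product $\prod_{j=1}^a\tilde\cA^{(k_j)}$ with $\sum k_j=n-b$, hence is $O(\lambda^{-a}\beta^{(n-b+a)(1/2-\epsilon_0)})$ by the inductive hypothesis $\tilde\cA^{(k)}\ll\lambda^{-1}\beta^{(k+1)(1/2-\epsilon_0)}$, while the $a=0$ term is $\partial_t^n\Phi(\tilde\cA,t)=O(\tilde\cA)=O(\lambda^{-1}\beta^{1/2-\epsilon_0})$ since $\partial_t^n\Phi(0,t)\equiv0$. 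One then checks $\lambda^{1-a}\beta^{(a-b)(1/2-\epsilon_0)}\ll1$ case by case using $\beta\le\lambda$, and divides by $\sinh(\cA)$. This works, but it is exactly the bookkeeping you flagged as the main obstacle; the $\tilde\cA$ device organizes it but does not remove it. The paper's Cauchy estimate bypasses the whole issue.
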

\begin{proof}
    By applying $\partial/\partial t$ to \eqref{eq: dist function}, we get
    \begin{align*}
        \sinh(\cA(z,\tau,t))\frac{\partial}{\partial t}\cA(z,\tau,t) = \sinh(t)-\frac{1}{2}e^{-t}(|z|^4+2|z|^2+\tau^2)=\sinh(t)+O(\lambda^{-1}).
    \end{align*}
    Note that
    \begin{align*}
        \sinh(\cA(z,\tau,t))=\sqrt{\cosh^2(\cA(z,\tau,t))-1}=\sqrt{\sinh^2(t)+O(\lambda^{-1})}=|\sinh(t)|(1+O(\lambda^{-1}\sinh^{-2}(t))),
    \end{align*}
    so
    \begin{align*}
        \frac{\partial}{\partial t}\cA(z,\tau,t)=\sgn(\sinh(t))+O(\lambda^{-1}\sinh^{-2}(t))=\sgn t+O(\lambda^{-1}\beta^{1-2\epsilon_0}),
    \end{align*}
    where the last step is due to Lemma \ref{KN lower bound for t from lower bound for A(z,t)}.

    To bound the higher derivatives, we note that
    \begin{align*}
        \frac{\partial}{\partial t}\mathcal{A}(z,\tau,t)=\frac{\sinh(t)-\frac{1}{2}e^{-t}(|z|^4+2|z|^2+\tau^2)}{\sqrt{\left(\cosh(t)+\frac{1}{2}e^{-t}(|z|^4+2|z|^2+\tau^2)+|z|^2\right)^2-1}}
    \end{align*}
    and for any $w\in\BC$ with $|w|=O(1)$ the real part inside the square root is
    \begin{multline}\label{eq: real part in sqrt}
        \Re\left(\left(\cosh(w)+\frac{1}{2}e^{-w}(|z|^4+2|z|^2+\tau^2)+|z|^2\right)^2-1\right)\\
        =\Re\left(\sinh^2(w)+O(\lambda^{-1})\right)=\frac{\cosh(2\Re(w))\cos(2\Im (w))-1}{2}+O(\lambda^{-1})\\
    = \sinh^2(\Re(w)) +O(\Im(w)^2)+O(\lambda^{-1}).
    \end{multline}
    For $t\ll1$ given in the proposition, by Lemma \ref{KN lower bound for t from lower bound for A(z,t)}, we have $|t|\gg \beta^{-1/2+\epsilon_0}$. Therefore, for $C>0$ and for any $w\in\BC$ with $|w-t|\leq C \beta^{-1/2+\epsilon_0}$, \eqref{eq: real part in sqrt} is
    \[
    \sinh^2(\Re(w))+O((C\beta^{-1/2+\epsilon_0})^2)+O(\lambda^{-1}) \gg\beta^{-1+2\epsilon_0},
    \]
    which holds when $C>0$ is chosen to be sufficiently small. Thus, 
    \[\frac{\partial}{\partial w}\mathcal{A}(z,w)=\frac{\sinh(w)-\frac{1}{2}e^{-w}(|z|^4+2|z|^2+\tau^2)}{\sqrt{\left(\cosh(w)+\frac{1}{2}e^{-w}(|z|^4+2|z|^2+\tau^2)+|z|^2\right)^2-1}}\]
    can be extended to be holomorphic (in $w$) on the ball of radius $C\beta^{-1/2+\epsilon_0}$ centered at $t$. By applying Cauchy's integral formula, with the bound 
    \(\frac{\partial}{\partial w}\cA(z,\tau,w)-\sgn t=O(\lambda^{-1}\beta^{1-2\epsilon_0})\) for $|w-t|\leq C \beta^{-1/2+\epsilon_0}$, we obtain, for any $n\geq1$,
    \begin{align*}
        \frac{\partial^{n+1}}{\partial t^{n+1}} \cA(z,\tau,t)=\frac{\partial^{n}}{\partial t^{n}} \left(\frac{\partial}{\partial t}\cA(z,\tau,t)-\sgn t\right) \ll_n \lambda^{-1}\beta^{1-2\epsilon_0}(C\beta^{-1/2+\epsilon_0})^{-n},
    \end{align*}
    which proves \eqref{KN approximate for higher derivative of mathcal A}.
\end{proof}

\subsubsection{Proof of \eqref{eq:equa bd for J2}}
We apply asymptotics for the spherical function from \cite[Theorem 1.5]{marshall2016p}.
There are functions $f_{\pm} \in C^\infty ((0,3)\times \BR) $ such that
\begin{align}\label{KN bound for f pm}
    \frac{\partial^n}{\partial t^n} f_\pm(t,s) \ll_n t^{-n}(st)^{-3/2}
\end{align}
and
\begin{align}\label{KN asymtotic for spherical function}
    \varphi_s (a(t)) = f_+(t,s)e^{ist}  + f_-(t,s)e^{-ist}  + O_A((st)^{-A})
\end{align}
for $t\in(0,3)$. Plug the formula \eqref{KN asymtotic for spherical function} into the integral $J_2(\phi,s)$. Because of the support of $b_2$, the error term in \eqref{KN asymtotic for spherical function} contributes $O_A(s^{-A})$, which may be ignored. We shall only consider the integral involving $f_+$, as the integral involving $f_-$ is similar.
By expanding one $\phi$ via the Fourier inversion and replacing $t_2$ with $t_2+t_1$, the integral we are considering is equal to
\begin{align}
    \frac{1}{2\pi}\iint_\BC\iint_\BR&\overline{b\phi(z_1,\tau_1,t_1)}\sF_t\phi(z_2,\tau_2,s_2)16e^{is_2t_1-4t_1}\notag\\
    &\left(\int_0^\infty\chi(t_2)b_2(\cA(z,\tau,t_2)) f_+(\cA(z,\tau,t_2),s)\exp(is\cA(z,\tau,t_2)+is_2t_2)dt_2\right.\label{KN main inner integral away from spec}\\
    +&\left.\int_{-\infty}^0 \chi(t_2)b_2(\cA(z,\tau,t_2)) f_+(\cA(z,\tau,t_2),s)\exp(is\cA(z,\tau,t_2)+is_2t_2)dt_2\right)\label{KN main inner integral away from spec 2}\\
    &dz_1 dz_2 d\tau_1d\tau_2 dt_1 ds_2.\notag
\end{align}
Here $|z|,|\tau|\ll\lambda^{-1/2}$ are defined by \eqref{eq: defn for z and tau}. We denote by $\chi(t_2)=b(z_2,t_2+t_1)e^{-2 t_2}$ and omit the variables $z_2,t_1$ as the estimates for the derivatives of $\chi$ with respect to $t_2$ are uniform on $z_2,t_1$. If we replace $t_2$ with $\beta^{-1/2+\epsilon_0}t_2$, then we write the first inner integral \eqref{KN main inner integral away from spec} in the form $\int_0^\infty \alpha(t_2) e^{i(s+s_2)^{1/2+\epsilon_0} f(t_2)} dt_2$ with
\begin{align*}
    &\alpha(t_2)= \beta^{-1/2+\epsilon_0}\chi(\beta^{-1/2+\epsilon_0}t_2)b_2(\cA(z,\tau,\beta^{-1/2+\epsilon_0}t_2)) f_+(\cA(z,\tau,\beta^{-1/2+\epsilon_0}t_2),s),\\
    &f(t_2)=\frac{s}{(s+s_2)^{1/2+\epsilon_0}}\cA(z,\tau,\beta^{-1/2+\epsilon_0}t_2)+\frac{s_2}{(s+s_2)^{1/2+\epsilon_0}}\beta^{-1/2+\epsilon_0}t_2.
\end{align*}
By \eqref{KN approximate for derivative of mathcal A} and the Fourier support assumption on $\phi$: $|s_2\pm \lambda|\geq\beta$, we have the following estimate for the derivative of the phase function $f(t_2)$:
    \begin{align*}
        \left|\frac{\partial f}{\partial t_2}\right| = |s+s_2|^{-1/2-\epsilon_0}\beta^{-1/2+\epsilon_0}\left|s(1 + O(\lambda^{-1}\beta^{1- 2\epsilon_0}))+s_2\right| = \left(\frac{|s+s_2|}{\beta} \right)^{1/2-\epsilon_0}+ O(\beta^{-2\epsilon_0})\gg1.
    \end{align*}
For any $n\geq2$, \eqref{KN approximate for higher derivative of mathcal A} implies that,
    \begin{align*}
        \frac{\partial^nf}{\partial t_2^n}&\ll_n \frac{s}{|s+s_2|^{1/2+\epsilon_0}} \lambda^{-1}(\beta^{1/2-\epsilon_0})^{n+1}(\beta^{-1/2+\epsilon_0})^n\ll \beta^{-2\epsilon_0}\ll1.
    \end{align*}
By \eqref{KN bound for f pm}, if $t\gg \beta^{-1/2+\epsilon_0}$ and $n\geq0$, then
\begin{align*}
    \frac{\partial^n}{\partial t^n} b_2(t)f_+(t,s) \ll_n \lambda^{-3/2}(\beta^{1/2-\epsilon_0})^{3/2}(\beta^{1/2-\epsilon_0})^n=\lambda^{-3/2} (\beta^{1/2-\epsilon_0})^{n+3/2} .
\end{align*}
Moreover, by \eqref{KN approximate for derivative of mathcal A} and \eqref{KN approximate for higher derivative of mathcal A}, for any $n\geq1$, it may be seen that
\begin{align*}
    \frac{\partial^n}{\partial t_2^n}\cA(z,\tau,\beta^{-1/2+\epsilon_0}t_2)\ll (1+\lambda^{-1}(\beta^{1/2-\epsilon_0})^{n+1})(\beta^{-1/2+\epsilon_0})^{n}\ll \beta^{-1/2+\epsilon_0}\ll1.
\end{align*}
Therefore, by the chain rule to higher derivatives, for any $n\geq0$ we obtain that the $n$-th derivative of the amplitude function $\alpha(t_2)$ is bounded by
    \begin{align*}
        \frac{\partial^n\alpha}{\partial t_2^n}\ll_n\beta^{-1/2+\epsilon_0}\lambda^{-3/2} (\beta^{1/2-\epsilon_0})^{n+3/2}=\lambda^{-3/2}(\beta^{1/2-\epsilon_0})^{n+1/2}.
    \end{align*}
Integrating by parts $n$ times gives that the integral \eqref{KN main inner integral away from spec}, which has been written as the form of the oscillatory integral $\int_0^\infty \alpha(t_2) e^{i(s+s_2)^{1/2+\epsilon_0} f(t_2)} dt_2$, is bounded by
\begin{align*}
    \ll_n& |s+s_2|^{-n(1/2+\epsilon_0)}\int_0^\infty\left|\left(\frac{\partial}{\partial t_2}\frac{1}{\partial f/\partial t_2}\right)^n\alpha\right| dt_2\\
    \ll_n& |s+s_2|^{-n(1/2+\epsilon_0)}\lambda^{-3/2}(\beta^{1/2-\epsilon_0})^{n+1/2}\beta^{1/2-\epsilon_0}\ll |s+s_2|^{-n\epsilon_0}.
\end{align*}
In summary, \eqref{KN main inner integral away from spec} is $\ll_{A,\epsilon_0}|s+s_2|^{-A}$. The second inner integral \eqref{KN main inner integral away from spec 2} can be bounded in the same way by replacing $s+s_2$ with $s-s_2$ so \eqref{KN main inner integral away from spec 2} is $\ll_{A,\epsilon_0}|s-s_2|^{-A}$. Hence, with this estimate and the fact $|s\pm s_2|\gg\beta$, by applying the Cauchy-Schwarz inequality and the Plancherel theorem, we obtain \eqref{eq:equa bd for J2}.


\bibliographystyle{plain} 
\bibliography{refs.bib} 

\end{document}